\definecolor{color1}{RGB}{27,158,119}
\definecolor{color2}{RGB}{217,95,2}
\definecolor{color3}{RGB}{117,112,179}
\definecolor{color4}{RGB}{231,41,138}
\newtheorem{theorem}{Theorem}[section]
\newtheorem{proposition}[theorem]{Proposition}
\newtheorem{lemma}[theorem]{Lemma}
\theoremstyle{remark}
\newtheorem{remark}[theorem]{Remark}
\newcommand{\dual}[2]{\left\langle #1,#2 \right\rangle}
\newcommand{\N}{\mathbb{N}}
\newcommand{\R}{\mathbb{R}}
\newcommand{\C}{\mathbb{C}}
\newcommand{\Z}{\mathbb{Z}}
\renewcommand{\leq}{\leqslant}
\renewcommand{\geq}{\geqslant}
\DeclareMathAlphabet{\mathpzc}{OT1}{pzc}{m}{it}
\renewcommand{\Re}{\mathcal R\!\mathpzc{e}}
\renewcommand{\Im}{\mathcal I\!\mathpzc{m}}
\begin{document}

\title[Variational properties of periodic NLS standing waves]{Variational properties of space-periodic standing waves
  of nonlinear Schr\"odinger equations with general nonlinearities}

\author[P.~Kfoury]{Perla Kfoury}

\address[Perla Kfoury ]{Institut de Math\'ematiques de Toulouse ; UMR5219,
  \newline\indent
  Universit\'e de Toulouse ; CNRS,
  \newline\indent
  INSA IMT, F-31077 Toulouse, 
  \newline\indent
  France}
\email[Perla Kfoury]{perla.kfoury@math.univ-toulouse.fr}

\author[S.~Le Coz]{Stefan Le Coz}
\thanks{The work of P. K. and S. L. C. is 
  partially supported by ANR-11-LABX-0040-CIMI and the ANR project NQG ANR-23-CE40-0005}

\address[Stefan Le Coz]{Institut de Math\'ematiques de Toulouse ; UMR5219,
  \newline\indent
  Universit\'e de Toulouse ; CNRS,
  \newline\indent
  UPS IMT, F-31062 Toulouse Cedex 9,
  \newline\indent
  France}

\email[Stefan Le Coz]{slecoz@math.univ-toulouse.fr}

\subjclass[2010]{35Q55 (35B10, 35A15)}

\date{\today}
\keywords{nonlinear Schr\"odinger equation, standing waves, Nehari manifold, normalized solutions, variational method, periodic solutions}

\begin{abstract}
  Periodic waves are standing wave solutions of nonlinear Schr\"odinger equations whose profile is periodic in space dimension one. We consider general nonlinearities and provide variational characterizations for the periodic wave profiles. This involves minimizing energy while keeping mass and momentum constant, as well as minimizing the action over the Nehari manifold. These variational approaches are considered both in the periodic and anti-periodic settings, and for focusing and defocusing nonlinearities. In appendix, we study the existence properties of periodic solutions of the triple power nonlinearity.
\end{abstract}


\maketitle

\section{Introduction}
We consider in one space dimension the nonlinear Schr\"odinger equation
\begin{equation}\label{eq:nls4}
  i \psi _t + \psi _{xx} + b f(\psi) =0,
\end{equation}
where $ \psi : \R_t \times \R_x \to \C $, $f\geq 0$ is a gauge invariant nonlinearity and $b \in \R \setminus \{0\}$. Typical examples of nonlinearities that will be considered in this paper are power-type nonlinearities such as $f(z)=|z|^{p-1}z$ for $p>1$, or combinations of powers such as $f(z)=|z|^{p-1}z+|z|^{q-1}z$ for $p,q>1$.

In the present paper, we are interested in specific solutions of \eqref{eq:nls4}, which we call periodic waves. \emph{Periodic waves} are solutions of the type $\psi(t,x)=e^{-iat} u(x)$, for a given frequency $a\in\R$ and a fixed profile $u$ periodic or anti-periodic in space, which verifies an ordinary differential equation (see~\eqref{eq:ode4}). They are the analogues in the context of spatially periodic solutions of the so-called \emph{standing waves} or \emph{solitary waves}, which are solutions of~\eqref{eq:nls4} of the same form, but for which the profile is asked to to be spatially localized, e.g. to live in $H^1(\R)$.

The study of localized solitary waves has a long history, starting with the early works of Strauss~\cite{St77} and Berestycki and Lions~\cite{BeLi83-1} for the existence of solitary waves in higher dimensions, together with the seminal papers of Cazenave and Lions~\cite{CaLi82} and Berestycki and Cazenave~\cite{BeCa81} for the stability and instability by blow-up of solitary waves by variational techniques. The study is still on-going, see e.g. the recent works of Kfoury, Le Coz and Tsai \cite{KfLeTs22} on the stability of standing waves of the nonlinear Schr\"odinger equation with double power nonlinearity and the works of Liu, Tsai and Zwiers~\cite{LiTsZw21} and Morrison and Tsai~\cite{MoTs23} on standing waves of the nonlinear Schr\"odinger equation with triple power nonlinearity.

While there now exists an extensive literature on the existence and stability of solitary waves, the study of periodic waves remains in its infancy, with studies focused mostly on specific nonlinearities such as the cubic power. The aim of the present paper is to extend the existing results beyond the example of the cubic nonlinearity and to treat more generic nonlinearities such as generic powers or sums of generic powers.

Before presenting our main results, we shortly review some of the existing results on periodic waves of nonlinear Schr\"odinger equations. Most of the literature focuses on the cubic case, which is completely integrable and for which explicit expressions of the periodic wave solutions in terms of Jacobi elliptic functions are available. One of the early studies was performed by Rowland in~\cite{Ro74}, where he obtained formally the stability of snoidal wave solutions. Gallay and Haragus~\cite{GaHa07-2,GaHa07-1} proved stability of snoidal waves with respect to same period perturbations using ordinary differential equations analysis and spectral arguments. Bottman, Deconinck and Nivala~\cite{BoDeNi11}, and Deconinck and Segal~\cite{DeSe17} used the complete integrability to obtain an analytical expression for the spectrum (in $L^2(\mathbb R)$) of the linearization of the cubic (focusing and defocusing) nonlinear Schr\"odinger equation around a periodic traveling wave. Orbital stability with respect to any subharmonic perturbation was obtained by Deconinck and Upsal~\cite{DeUp20} and Gallay and Pelinovsky~\cite{GaPe15} via a Lyapunov functional using higher-order conserved quantities of the cubic nonlinear Schr\"odinger equation. Rogue periodic waves, i.e. rogue waves on a periodic background, have been constructed by Chen and Pelinovsky~\cite{ChPe17}. Gustafson, Le Coz and Tsai~\cite{GuLeTs17} provided a global variational characterization of the cnoidal, snoidal, and dnoidal elliptic functions for the cubic case, and proved orbital stability results for the corresponding solutions.

The existence and orbital stability of periodic waves of the cubic-quintic nonlinear Schr\"odinger equation were studied by Alves and Natali in~\cite{AlNa23} using the construction of a smooth curve of dnoidal profiles by bifurcation. Existence and orbital instability results of cnoidal periodic waves for the quintic Klein-Gordon and nonlinear Schr\"odinger equations were obtained by Moraes and de Loreno~\cite{Mode22} using spectral analysis and Shatah-Strauss approach. The periodic traveling wave solutions of the derivative nonlinear Schr\"odinger equation (which is connected to the cubic-quintic nonlinear Schr\"odinger equation) were studied by Hayashi~\cite{Ha19}. A rigorous modulational stability theory for periodic traveling wave solutions to equations of nonlinear Schr\"odinger type with generic nonlinearities was presented by Leisman, Bronski, Johnson and Marangell in~\cite{PlBrJoMa21}, with application to the cubic and quintic nonlinearities.

Most of the works devoted to the study of periodic waves uses tools such as ordinary differential equations analysis, bifurcation theory or spectral theory. In the present paper, we focus on the variational properties of periodic waves, i.e. we characterize them as solutions of minimization problems. The variational problems that we consider are of two types: first, minimization of the energy over the mass constraint; second, minimization of the action over the Nehari manifold. The first type of minimization problems has the advantage of leading at the same time to the orbital stability of the wave obtained. On the other hand, it is of course unable to capture unstable periodic waves. The second type of minimization problems has the advantage of capturing a wider range of periodic waves. On the other hand, obtaining stability or instability requires further investigation.

Under mild assumptions on the nonlinearity, the Cauchy problem for~\eqref{eq:nls4} is locally well-posed (see~\cite{Ca03}) in
the space of periodic functions $H^1_{loc}(\R) \cap P_T$ (as well as in the space of anti-periodic functions $H^1_{loc}(\R) \cap A_T$), i.e. for any $\psi_0\in H^1_{loc}(\R) \cap P_T$ there exists a unique maximal solution $\psi\in C((-T_*,T^*), H^1_{loc}(\R) \cap P_T)$. Moreover, we have continuous dependance with respect to the initial data, the blow-up alternative holds, and the energy, mass and momentum of the solution, defined as follows, are conserved along the time evolution:
\begin{gather*}
  \mathcal{E}(\psi)= \frac{1}{2} \int_0^T | \psi_x|^2 dx- b\int_0^T F(\psi) dx,\\
  M(\psi )= \frac{1}{2}\int_0^T | \psi|^2 dx,\quad \quad P(\psi)= \frac{1}{2} \Im \int_0^T \psi \overline{\psi}_x dx,
\end{gather*}
where by $F$ we denote the real antiderivative of $f$, i.e. $F(z) = \int_0^{|z|}f(s) ds$ and $T$ is the space period.

It is common to consider the so-called \emph{action} functional, defined for a given $a$ by
\[
  S(\psi)=\mathcal{E}(\psi)-a M(\psi),
\]
and the associated \emph{Nehari} functional, defined by
\[
  I(\psi)=\dual{S'(\psi)}{\psi}= \int_0^T | \psi_x|^2 dx -a\int_0^T | \psi|^2 dx- b\int_0^T\Re( f(\psi) \overline{\psi}) dx
  .
\]

Periodic waves can be obtained as solutions of various variational problems. As already said, in the present paper, we consider two variational problems in particular : minimization of the energy over fixed mass (and sometimes momentum) and minimization of the action over the Nehari manifold. We have investigated these minimization problems for periodic and anti-periodic functions. Our results can be summarized informally as follows (see Section~\ref{sec:minimization} for precise statements).

\begin{theorem}\label{thm} Let the energy, mass, momentum, action and Nehari functionals be defined as above. The following holds.
  \begin{enumerate}
  \item Let $b>0$. There exists a real valued minimizer of the energy, under fixed mass or under fixed mass and zero momentum, among periodic functions. The minimal energy is finite and negative. If the mass is larger than a given threshold, then the minimizer is not a constant, the associated Lagrange multiplier verifies $a<0$, and the minimizer is positive.    

  \item Let $b<0$. There exists a unique (up to phase shift) minimizer of the energy, under fixed mass or under fixed mass and zero momentum, among periodic functions. It is the constant function $u_{\infty} \equiv \sqrt{\frac{2m}{T}}.$
    
  \item Let $b<0$ and $f(u)=\sum_{j=1}^N|u|^{p_j-1}u$,
    with $p_j>1$ for $j=1,\dots,N$. There exists a unique (up to phase shift and complex conjugate) minimizer of the energy, under fixed mass, among anti-periodic functions. It is the plane wave $u_{\infty} \equiv \sqrt{\frac{2m}{T}} e^{\frac{i \pi x}{T}}$.
  \item Let $b>0$, $a<0$ and $f(u)= |u|^{p-1}u$, with
    $p>1$. The minimum of the action on
    the Nehari manifold among periodic functions is
    finite and there exists a real minimizer.

    \item Let $b<0$, $a>0$ and $f(u)= |u|^{p-1}u$, with
    $p>1$. 
    There exists a unique (up to phase shift) minimizer of  the action on
    the Nehari manifold among periodic functions. It is the constant function $u_{\infty} \equiv\left(\frac{-a}{b}\right)^{\frac{1}{p-1}}.$

  \item Let $b>0$, $a<\frac{4\pi^2}{T^2}$ and $f(u)= |u|^{p-1}u$, with $p>1$. The minimum of the minimization problem on the Nehari manifold among anti-periodic functions is finite and there exists a minimizer.
    When $p$ is an odd integer then the minimizer is real.
  \end{enumerate}
\end{theorem}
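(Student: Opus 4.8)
The plan is to treat this as a constrained minimization on the Nehari manifold, with the reality statement as the genuinely delicate point. Throughout write $Q(u):=\int_0^T\abs{u_x}^2\,dx-a\int_0^T\abs{u}^2\,dx$ and $\mathcal{N}:=\{u\in A_T\setminus\{0\}:I(u)=0\}$; for the pure power one has $F(z)=\frac{1}{p+1}\abs{z}^{p+1}$ and $\Re(f(u)\overline u)=\abs{u}^{p+1}$, so that $I(u)=Q(u)-b\int_0^T\abs{u}^{p+1}\,dx$ and, on $\mathcal{N}$, $S(u)=\frac{p-1}{2(p+1)}Q(u)=\frac{p-1}{2(p+1)}b\int_0^T\abs{u}^{p+1}\,dx$. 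The first observation is that the hypothesis $a<\frac{4\pi^2}{T^2}$ is exactly the condition ensuring $Q$ is positive definite and coercive on $A_T$: the bottom of the spectrum of $-\partial_{xx}$ on $A_T$ is $\frac{4\pi^2}{T^2}$, so $Q(u)\geq(1-aT^2/4\pi^2)\int_0^T\abs{u_x}^2\,dx$ and, via the anti-periodic Poincaré inequality, $Q(u)\gtrsim\norm{u}_{H^1(0,T)}^2$. For any $u\in A_T\setminus\{0\}$ the fibering map $t\mapsto I(tu)=t^2Q(u)-bt^{p+1}\int_0^T\abs{u}^{p+1}\,dx$ then has a unique positive zero $t_*(u)$, so $\mathcal{N}\neq\emptyset$; moreover the embedding bound $\int\abs{u}^{p+1}\lesssim\norm{u}_{H^1}^{p+1}$ combined with $Q(u)=b\int\abs{u}^{p+1}$ forces $\norm{u}_{H^1}\geq\delta>0$ on $\mathcal{N}$, whence $m:=\inf_{\mathcal{N}}S>0$.

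For existence I would run the direct method. Take a minimizing sequence $(u_n)\subset\mathcal{N}$; since $S(u_n)=\frac{p-1}{2(p+1)}Q(u_n)$ is bounded, coercivity bounds $(u_n)$ in $H^1(0,T)$, so up to a subsequence $u_n\rightharpoonup u_\infty$ weakly in $H^1$, and by the one-dimensional compact embedding $H^1(0,T)\hookrightarrow L^2\cap L^{p+1}$ we get strong convergence in $L^2$ and $L^{p+1}$; the limit lies in the weakly closed subspace $A_T$. The uniform bound $\int\abs{u_n}^{p+1}\geq\kappa>0$ passes to the limit, so $u_\infty\neq0$. Weak lower semicontinuity of $\norm{\cdot}_{L^2}^2$ applied to $u_x$ together with the strong convergence of the remaining terms gives $Q(u_\infty)\leq\liminf Q(u_n)$ and $I(u_\infty)\leq\liminf I(u_n)=0$. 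If $I(u_\infty)<0$ one scales by $t_*(u_\infty)<1$ and finds $S(t_*u_\infty)=\frac{p-1}{2(p+1)}t_*^2Q(u_\infty)<\frac{p-1}{2(p+1)}Q(u_\infty)\leq m$ with $t_*u_\infty\in\mathcal{N}$, a contradiction; hence $u_\infty\in\mathcal{N}$ and $S(u_\infty)\leq m$, so $u_\infty$ is a minimizer. In particular $m$ is finite and attained.

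The reality statement for odd integer $p$ is the heart of the matter, and here the usual trick fails: in the periodic case one simply replaces $u$ by $\abs{u}$, but $\abs{u}$ is periodic, not anti-periodic, so it is not admissible. Instead I would exploit that $p+1=2k$ is even by averaging over the family of real anti-periodic functions $v_\alpha:=\Re(e^{-i\alpha}u)\in A_T$, $\alpha\in[0,2\pi)$. Writing $u=v+iw$ one has $Q(v_\alpha)=\tfrac12 Q(u)+\gamma\cos(2\alpha-\delta)$ for some amplitude $\gamma\in[0,\tfrac12 Q(u)]$, together with the two exact identities $\frac{1}{2\pi}\int_0^{2\pi}Q(v_\alpha)\,d\alpha=\tfrac12 Q(u)$ and $\frac{1}{2\pi}\int_0^{2\pi}\int_0^T v_\alpha^{p+1}\,dx\,d\alpha=c_p\int_0^T\abs{u}^{p+1}\,dx$, where $c_p=\frac{1}{2\pi}\int_0^{2\pi}\cos^{p+1}\theta\,d\theta=\binom{2k}{k}4^{-k}$; the second identity is where oddness of $p$ is essential, since it uses $v_\alpha^{p+1}=\abs{v_\alpha}^{p+1}$ and a clean Wallis constant. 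For each $\alpha$ with $v_\alpha\neq0$, scaling $t_*(v_\alpha)v_\alpha\in\mathcal{N}$ and minimality $S(t_*v_\alpha)\geq m=S(u)$ yield, after simplification, $Q(v_\alpha)^{(p+1)/2}\geq b\,Q(u)^{(p-1)/2}\int_0^T v_\alpha^{p+1}\,dx$. Integrating this over $\alpha$ and inserting the two identities together with $Q(u)=b\int\abs{u}^{p+1}$ gives $\frac{1}{2\pi}\int_0^{2\pi}Q(v_\alpha)^{k}\,d\alpha\geq c_p\,Q(u)^{k}$.

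To conclude I would read the last inequality as a rigidity statement. With $A=\tfrac12 Q(u)$ and $t=\gamma/A\in[0,1]$ one has $\frac{1}{2\pi}\int_0^{2\pi}Q(v_\alpha)^k\,d\alpha=A^k\Psi(t)$, where $\Psi(t)=\frac{1}{2\pi}\int_0^{2\pi}(1+t\cos\psi)^k\,d\psi$ has nonnegative Taylor coefficients, hence is strictly increasing on $[0,1]$ for $k\geq2$ (i.e. $p\geq3$), and satisfies $\Psi(1)=2^k c_p$. The inequality thus reads $\Psi(t)\geq 2^k c_p=\Psi(1)$, while $t\leq1$ forces $\Psi(t)\leq\Psi(1)$; hence $t=1$, i.e. $\gamma=\tfrac12 Q(u)$, so $Q(v_{\alpha^*})=0$ at the $\alpha^*$ with $2\alpha^*-\delta=\pi$. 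By positive definiteness of $Q$ on $A_T$ this gives $v_{\alpha^*}=\Re(e^{-i\alpha^*}u)\equiv0$, meaning $e^{-i\alpha^*}u$ is purely imaginary, so $u$ equals $e^{i\beta}$ times a real anti-periodic function. The main obstacle is precisely this reality step: recognizing the averaged inequality as an equality case and verifying that the averaging identities close exactly only because $p+1$ is an even integer; the existence half is routine once coercivity on $A_T$ is in place.
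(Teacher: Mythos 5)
Your proposal proves only item (6) of the theorem. The statement is a six-part summary: items (1)--(2) concern minimization of the energy at fixed mass (and zero momentum) over periodic functions in the focusing and defocusing cases, item (3) the defocusing anti-periodic energy minimization for a sum of powers, and items (4)--(5) the Nehari problem over periodic functions. None of these appear in your write-up, and they require tools you have not set up: the Gagliardo--Nirenberg inequality and the mass-subcritical growth hypothesis~\ref{item:h5} for coercivity in item (1), a Morse-index computation on the linearization around the constant to show non-constancy above a mass threshold and negativity of the Lagrange multiplier, the phase-portrait analysis of the profile ODE to identify $\sqrt{2m/T}$ as the unique sign-definite bounded solution in item (2), the Poincar\'e--Wirtinger and H\"older inequalities to single out the plane wave in item (3), and the fibering-map scaling onto the Nehari manifold (which you use only in the anti-periodic setting) for items (4)--(5). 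As it stands the proof is therefore incomplete in a substantial way.

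For the one item you do treat, your argument is correct and takes a genuinely different route from the paper on the delicate point. The existence half (coercivity of $Q$ on the anti-periodic class from $a<\frac{4\pi^2}{T^2}$, compact embedding, scaling a weak limit with $I(u_\infty)\leq 0$ back onto the manifold) coincides with the paper's. For reality when $p$ is odd, the paper constructs a real competitor by a Fourier rearrangement, $\tilde v_j=\sqrt{(|v_j|^2+|v_{-j}|^2)/2}$ (Lemma~\ref{lem:new}), which preserves the $L^2$ and $\dot H^1$ norms and increases the $L^{p+1}$ norm, and then passes through the equivalent problem~\eqref{min:prob-norm}. You instead average over the real parts $v_\alpha=\Re(e^{-i\alpha}u)$ and exploit exact identities for the $\alpha$-averages of $Q(v_\alpha)$ and of $\int_0^T v_\alpha^{p+1}\,dx$; both identities hold (the Wallis constant computation uses precisely that $p+1$ is even, the bound $\gamma\leq\frac12 Q(u)$ is Cauchy--Schwarz for the positive definite form $Q$, and $k=\frac{p+1}{2}\geq 2$ since $p\geq 3$, so $\Psi$ is strictly increasing). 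Your route yields the stronger rigidity statement that \emph{every} minimizer is a phase shift of a real function, not merely that a real minimizer exists, which is a genuine improvement in this case; the paper's rearrangement lemma, on the other hand, is a general inequality of independent interest. One small notational slip: the admissible class is $A_{\frac{T}{2}}$ in the paper's convention (so that the bottom of the spectrum of $-\partial_{xx}$ is indeed $\frac{4\pi^2}{T^2}$), not $A_T$.
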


To establish the results of Theorem~\ref{thm}, we proceed in the following way. We start by an analysis of the ordinary differential equation verified by the profile. We then consider the variational problems themselves. As we are working with periodic functions, hence on the bounded domain $[0,T]$, we benefit from the compactness properties of Sobolev injections and existence of a minimizer is straightforward in most cases. On the other hand, the identification of the minimizer, or its specific properties, are usually delicate to obtain. In particular, in the case of minimization over the Nehari manifold for anti-periodic function (see Section~\ref{sec:nehari-anti}), the fact that the minimizer is real valued is established thanks to a Fourier rearrangement inequality which we believe to be new and of independent interest (see Lemma~\ref{lem:new}).

The rest of the paper is organized as follows. In Section~\ref{sec:profile}, we present the notation, the assumptions on the nonlinearity and the analysis of the profile ordinary differential equation. In Section~\ref{sec:minimization}, we present a Fourier rearrangement inequality, then we study minimization of the energy at fixed mass, and finally we consider minimization over the Nehari manifold. The appendix gathers related material which was not fitting directly into the study: we study the existence properties of periodic solutions to the equation with triple power nonlinearity.



\section{Analysis of the profile equation}
\label{sec:profile}

The simplest non-trivial solutions of~\eqref{eq:nls4} are the \emph{standing waves}. They are solutions of the form
\begin{equation*}
  \psi (t,x)= e^{-iat} u(x), \quad a \in \R.
\end{equation*}
The profile function $u(x)$ satisfies the ordinary differential equation
\begin{equation} \label{eq:ode4}
  u_{xx}+au+b f(u)=0.
\end{equation}
It is an integrable ordinary differential equation, whose conserved quantities (on $\C$) are the momentum $J$ and the energy $E$, given by
\[
  J= \Im (u_x \overline{u}), \quad \quad E= \frac{1}{2} |u_x |^2 + \frac{a}{2} |u|^2 + b F(u).
\] 
The nonlinearity $f: \C \to \C $ is defined for any $z \in \C $ by $
f(z)=g(|z|^2 )z $ with $ g \in C^0 ( [0, + \infty), \R) \cap C^2 ((0,
+ \infty), \R) $. For simplicity, we denote by $f'$ the derivative
of $f_{|\R}$. For the analysis of the ordinary differential equation in the present section, we make the following assumptions on the
nonlinearity.
\begin{enumerate}[start=1,label={(H\arabic*)}]
\item \label{item:h1}
  The derivative $f'$ is
  non-decreasing on $(0,\infty)$.
\item \label{item:h2} At infinity $\lim_{s\to\infty}\frac{F(r)}{r^2}=\infty$.
\item \label{item:h3} The function $s\to \frac{f(s)}{s}$ is increasing on
  $(0,\infty)$ and $\lim_{s\to 0}\frac{f(r)}{r}=0$. 
\end{enumerate}

For the rest of this section, we assume that
\ref{item:h1}-\ref{item:h3} hold and we study the bounded solutions of the profile
equation~\eqref{eq:ode4}. We will distinguish between two different
cases depending on whether $J=0$ or not. When $u(x)\neq 0$, we introduce the polar coordinates 
\[
  u(x)= r(x) e^{i \phi (x)} ,
\]
with $r>0$ and $r , \phi \in C^2(\R)$. Invariants become 
\[
  J=r^2 \phi _x, \quad 
  E= \frac{{r_x}^2}{2} + \frac{J^2}{2r^2} + a\frac{r^2}{2}+b F(r).
\]
If $ J=0$, then replacing $r(x)e^{i\phi(x)}$ with $r(x)e^{i\phi}$ for
some $\phi \in [0, 2\pi]$ we can assume that $u(x) \in \R$ up to a
constant phase. If $J \neq 0 $, then $u(x) \neq 0$ for all $x \in \R
$, and $\phi_x \neq 0$, so $u$ is truly complex-valued.
Define the effective potential by
\[
  V_J(r) = \frac{J^2}{2r^2} + a\frac{r^2}{2} +b F(r).
\]
By elementary calculations, we have
\[ 
  V'_J(r)= - \frac{J^2}{r^3}+a r +bf(r).
\]
We describe the potential $V_J$. We start with the case $J=0$. Then 
\[
  V(r)= a \frac{r^2}{2}+ b F(r), \quad V'(r)=a r+b f(r).
\]
If $V'(r)=0$ for $r>0$, then $\frac{f(r)}{r}= -\frac{a}{b}$. We know that $\frac{f(r)}{r}$ is an increasing function for all $r > 0$, therefore there exists at most one value $r_0 >0$ such that 
\[
  a r_0+b f(r_0)=0.
\]
We now discuss what happens depending on the values of $a$ and $b$.
Since $\lim\limits_{r \rightarrow 0} \frac{f(r)}{r}=0$, we have $ sign
(V'(r))= sign (a)$, when $r$ approaches $0^+$. Moreover, we have $
\lim\limits_{r \rightarrow + \infty} V(r)= sign(b) \infty$, because we
have $\lim\limits_{r \rightarrow +\infty}
\frac{F(r)}{r^2}=+\infty$. We start with the defocusing case where
$b<0$. If $a\leq 0$, then $V'(r)<0$ for all $r>0$ and there does not
exist bounded solutions. Assume that $a>0$. Then $V'(r)=0$ has exactly one solution. 
Therefore the graph of $ V $ as a function of $r$ is given on the left of Figure~\ref{fig:V(r)}.
The third case is the focusing case where $b>0$ with $a\geq 0$. Then
$V'(r)>0$ for all $r>0$. The graph of $ V $ as a function of $r$ is represented on the center of Figure~\ref{fig:V(r)}. The last case is the focusing case where $b>0$ with $a<0$, then $V'(r)=0$ has again exactly one solution, and the graph of $ V $ as a function of $r$ is represented on the right of Figure~\ref{fig:V(r)}.
\begin{figure}[htbp!]
  \centering
  \begin{tabular}{cc}
    \includegraphics[width=.31\textwidth]{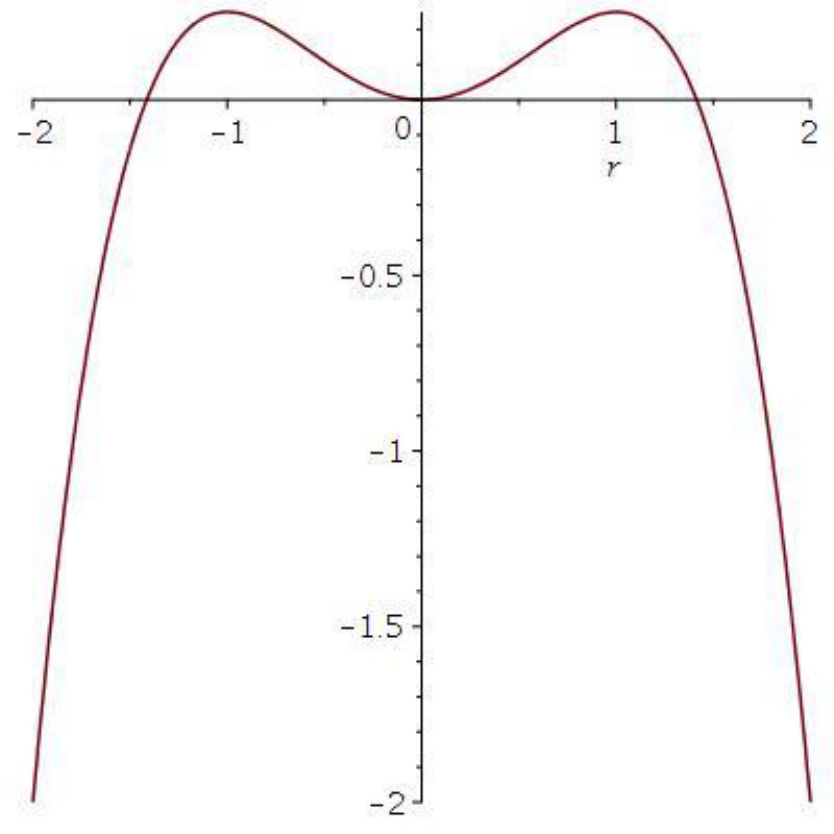} 
    \includegraphics[width=.31\textwidth]{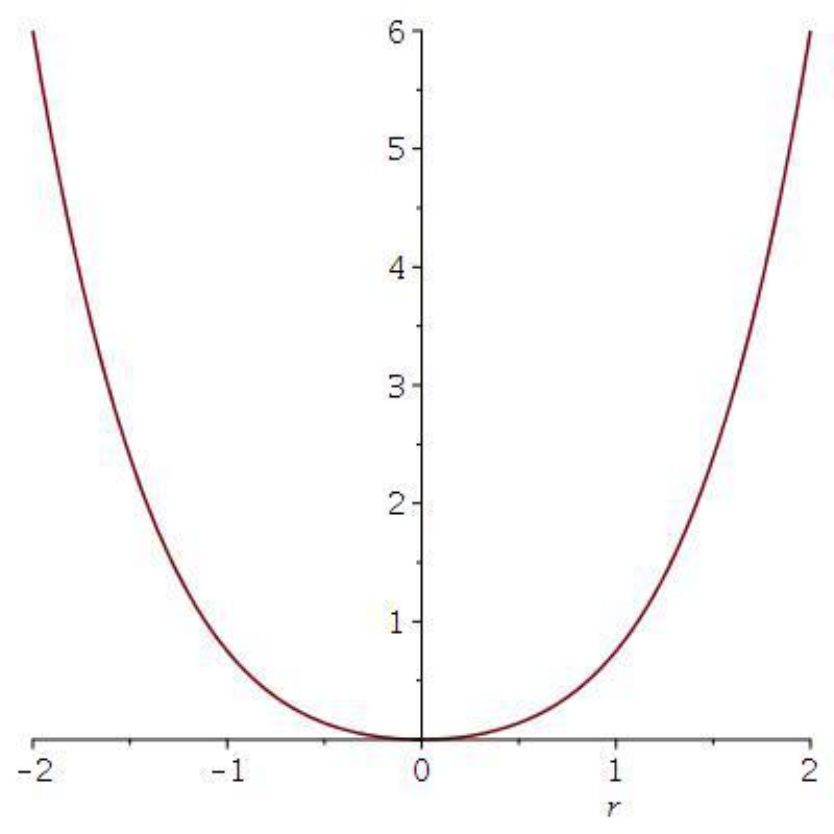}
    \includegraphics[width=.31\textwidth]{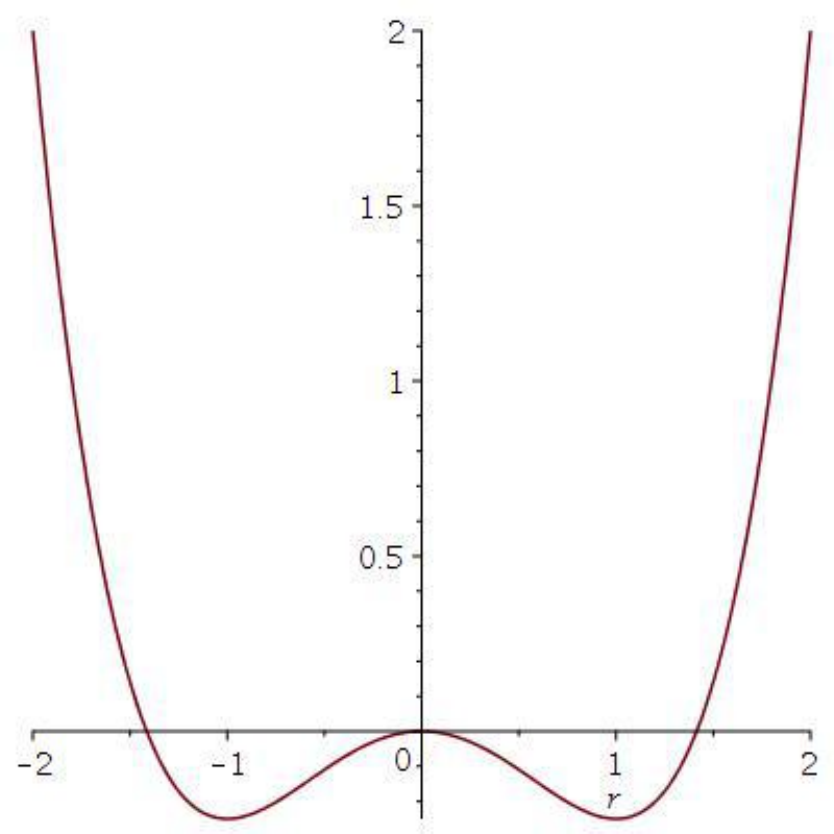}
  \end{tabular}
  \caption{Possible plots of $V$ as a function of $r$ when $J=0$.}
  \label{fig:V(r)}
\end{figure}

Now we assume that $J \neq 0$.
If $V'_J(r)=0 $ for $r>0$, then $-J^2+ r^4\left(a +b \frac{f(r)}{r}\right)=0$.
Let 
\[
  k(r)= r^4\left(a +b \frac{f(r)}{r}\right).
\]
We will study the variations of the function $k$, and infer from these the graph of the potential. We have
\[
  k'(r)= 4 a r^3 +b f'(r) r^3 +3 b r^2 f(r)=r^3 \left( 4 a + b f'(r) + 3 b \frac{f(r)}{r}\right).
\]
As $f'(r)$ and $f(r)/r$ are increasing, the derivative $k'$ changes
sign at most once. When $b<0$ and $a\leq0$,
$k$ is always negative decreasing, $V_J$ has no critical point and
\eqref{eq:ode4} has no bounded solution. 
We then consider the defocusing case where $b<0$ and $a>0$. In this
case the graph of $k(r)$ as a function of $r$ is presented on the left
of Figure~\ref{fig:k(r)}. Hence $V'_J(r)=0$ has 2 solutions for
$J^2<r_c$ where $k'(r_c)=0$ and the maximum occurs. The graph of $V_J$
as a function of $r$ is presented on the left of Figure
\ref{fig:V_J(r)}. In that case, $V_J$ admits a minimum and a
maximum. When $J^2=r_c$, $V$ is monotonically decreasing but still
admits a critical point, which gives rise to a unique bounded solution 
of~\eqref{eq:ode4} (which is a plane wave). When $J^2>r_c$, $V$ is
monotonically decreasing with no critical point, and
\eqref{eq:ode4} has no bounded solution. 
The third case is the focusing case where $b>0$ and $a\geq 0$. We know that $k(r)$ is a strictly increasing function presented on the center of Figure~\ref{fig:k(r)}. Therefore, $V'_J(r)=0$ has a unique solution. Then the graph of $V_J$ as a function of $r$ is given in the center of Figure~\ref{fig:V_J(r)}.
The last case is the focusing case where $b>0$ and $a<0$. The graph of $k$ as a function of $r$ is represented on the right of Figure~\ref{fig:k(r)}. Therefore, $V'_J(r)=0$ has a unique solution. Then the graph of $V_J$ as a function of $r$ is represented on the right of Figure~\ref{fig:V_J(r)}.

\begin{figure}[htbp!]
  \centering
  \begin{tabular}{cc}
    \includegraphics[width=.31\textwidth]{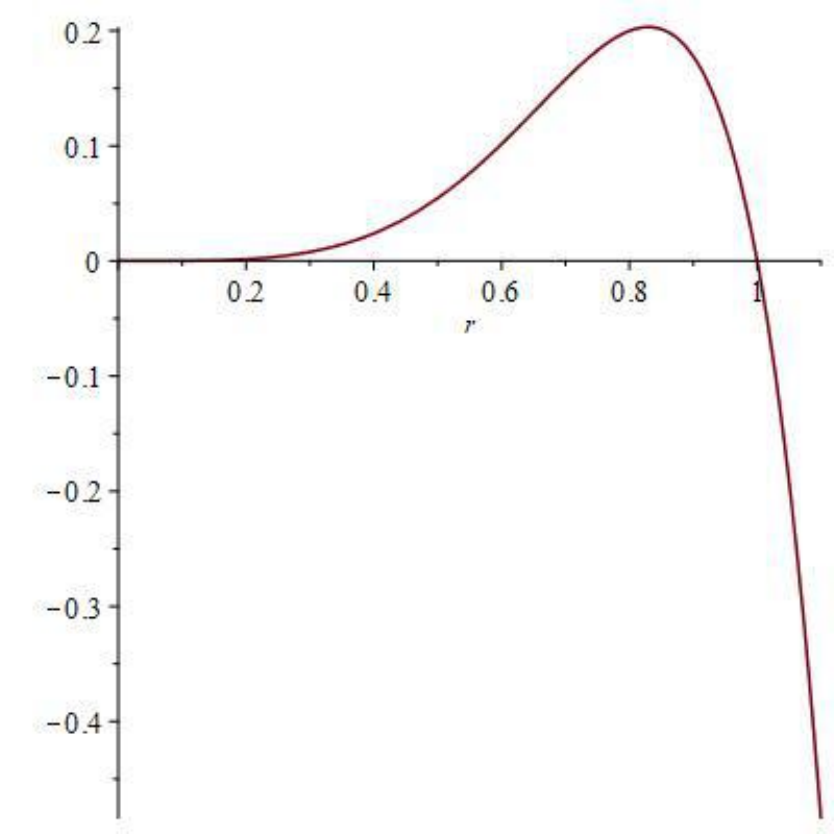}
    \includegraphics[width=.31\textwidth]{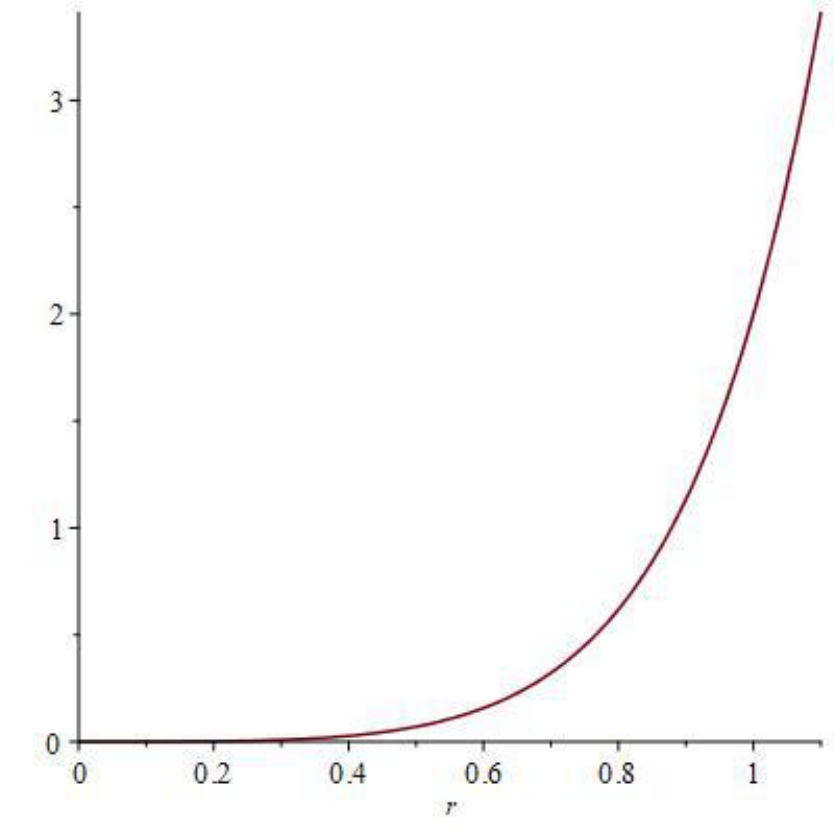}
    \includegraphics[width=.31\textwidth]{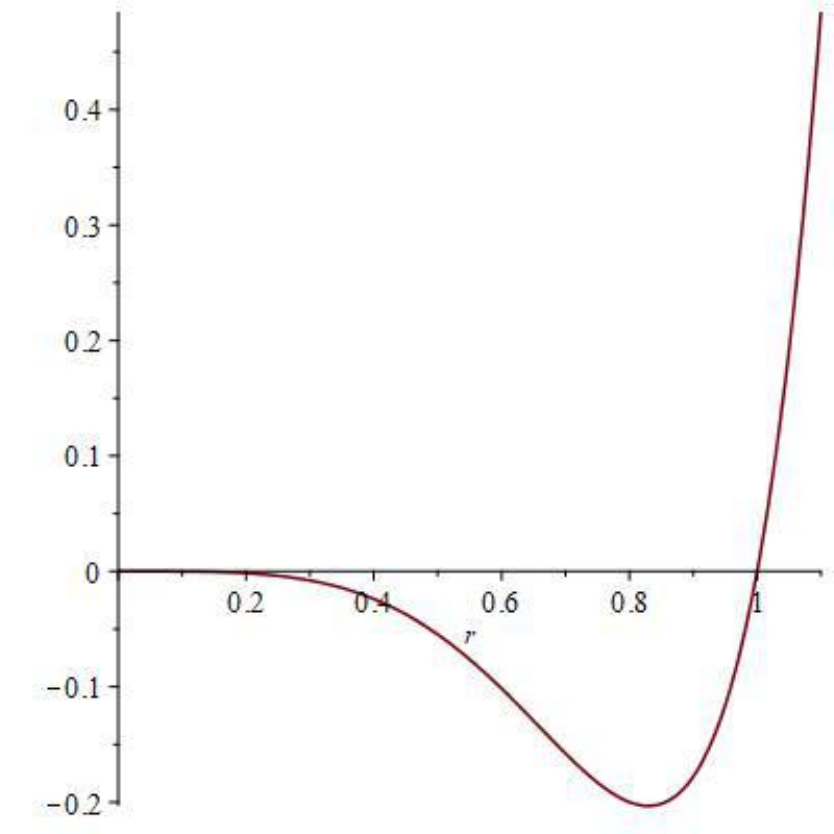}
  \end{tabular}
  \caption{Possible plots of $k$ as a function of $r$.}
  \label{fig:k(r)}
\end{figure}

\begin{figure}[htbp!]
  \centering
  \begin{tabular}{cc}
    \includegraphics[width=.31\textwidth]{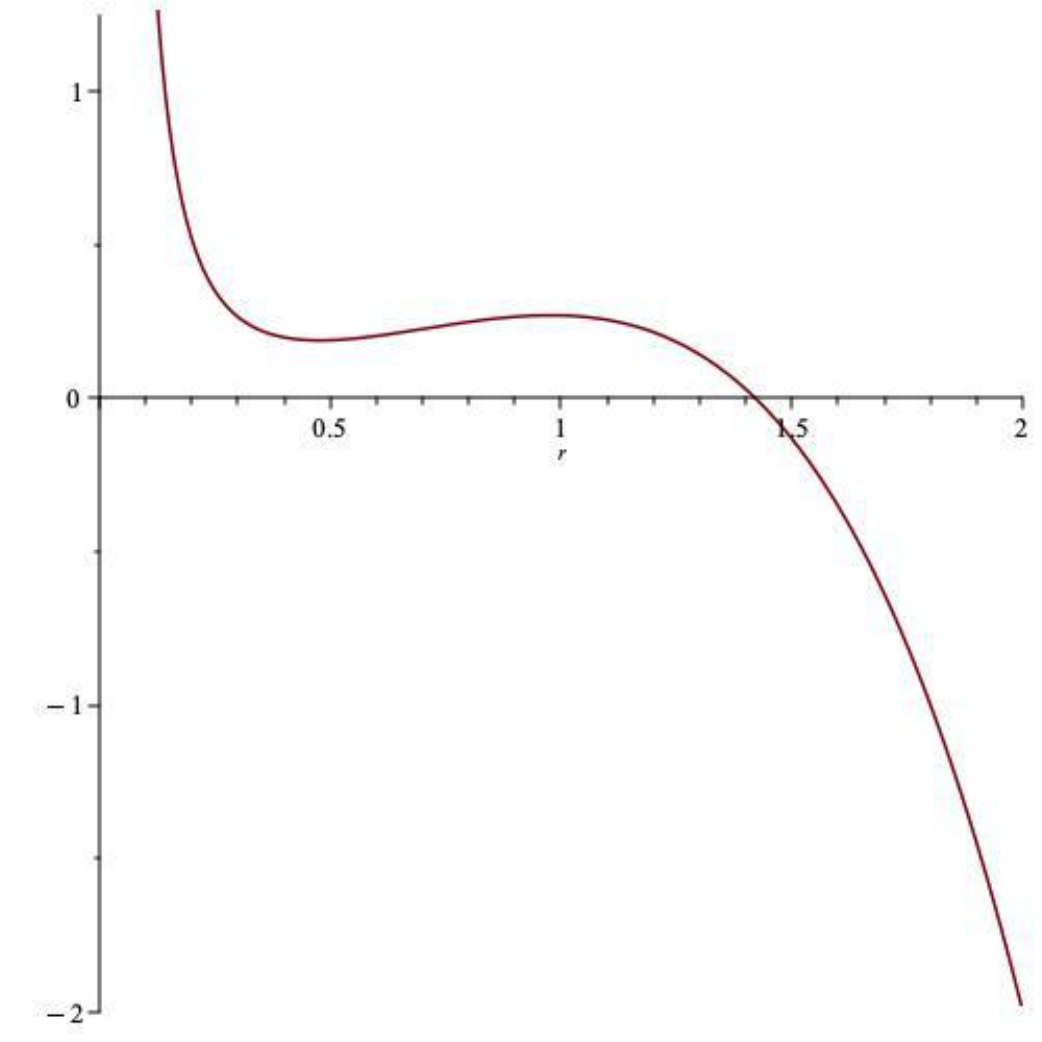} 
    \includegraphics[width=.31\textwidth]{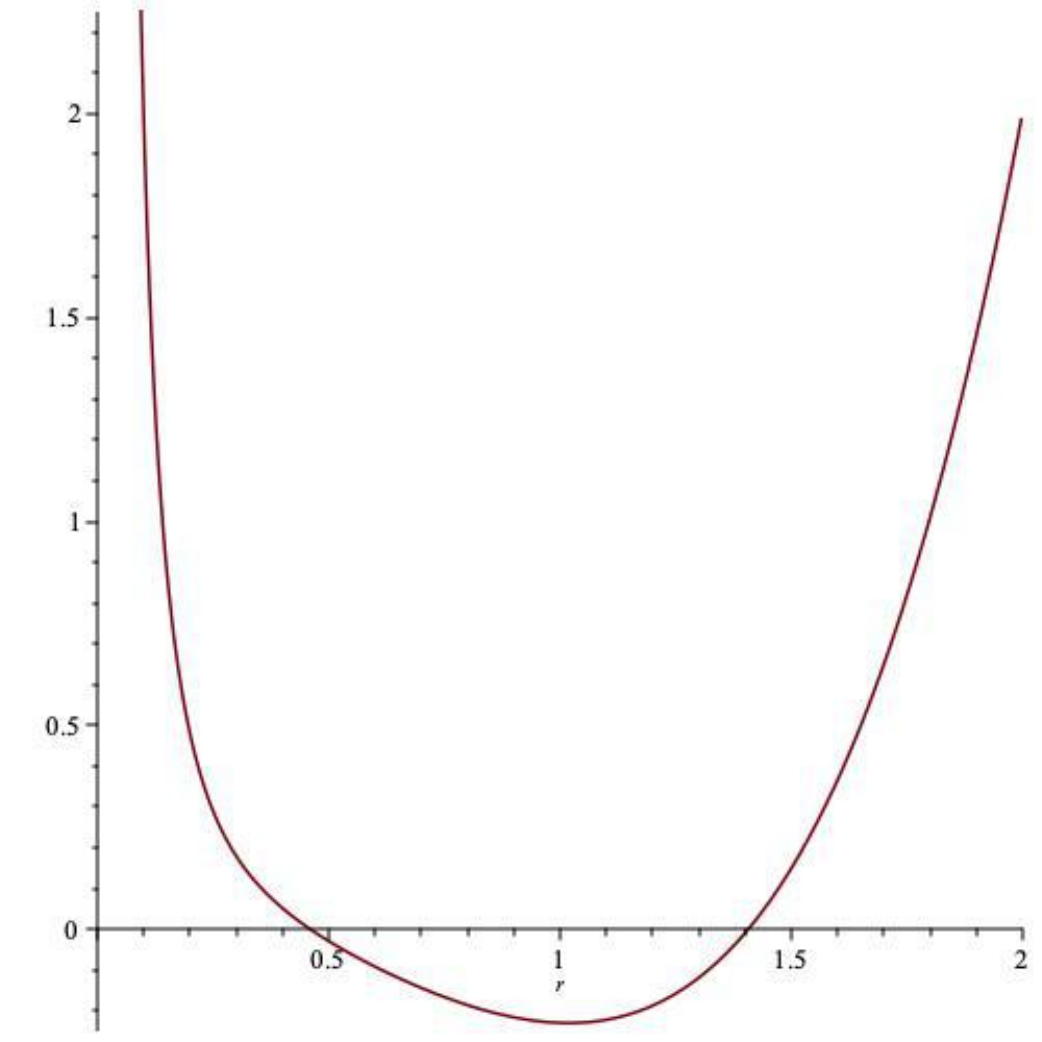}
    \includegraphics[width=.31\textwidth]{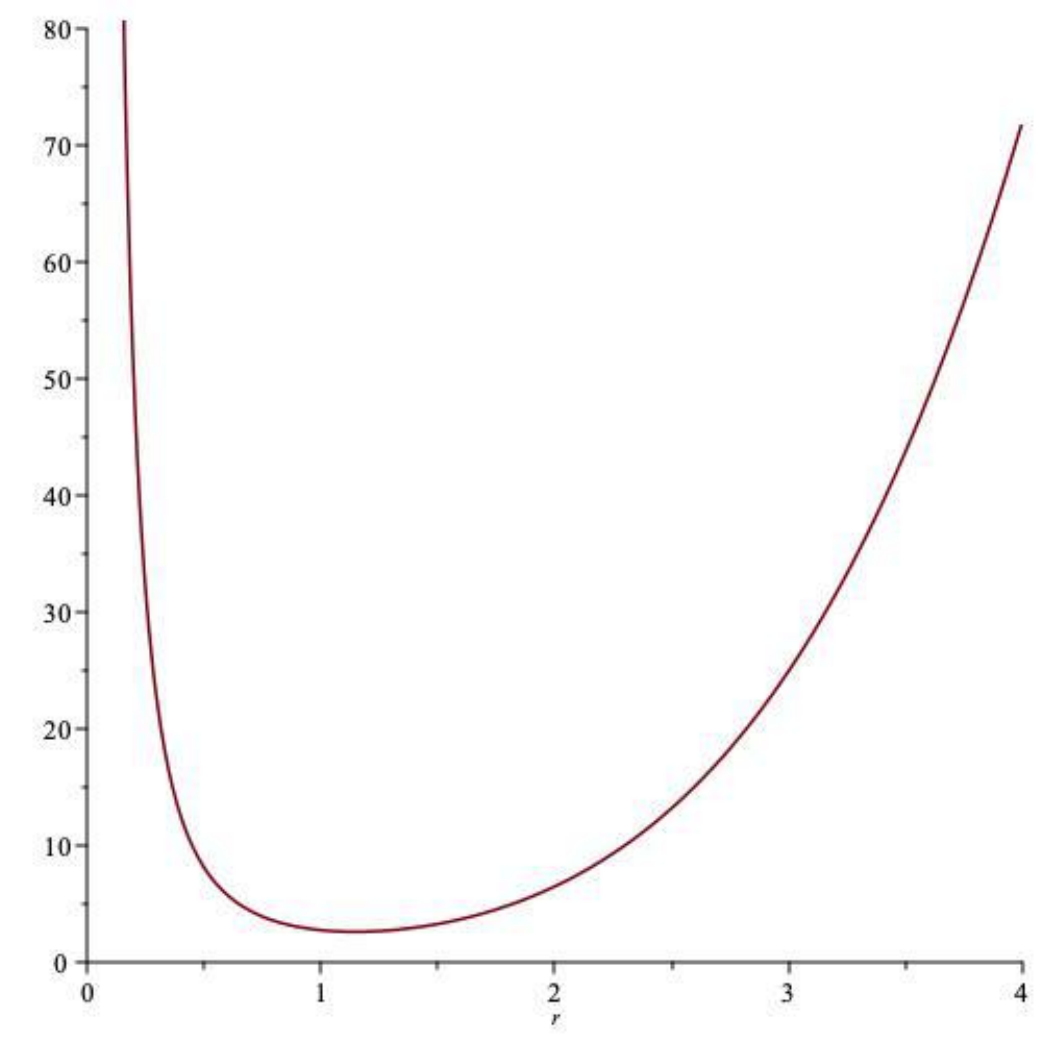}
  \end{tabular}
  \caption{Possible plots of $V_J$ as a function of $r$ with $J \neq 0$.}
  \label{fig:V_J(r)}
\end{figure}

We now represent the phase portraits in each case where bounded
solutions exist. In polar coordinates, the equation~\eqref{eq:ode4} becomes
\[
  \label{eq:unkown}
  r_{xx} - \frac{J^2}{r^3} + a r +b f(r)=0.
\]
We rewrite this second-order differential equation in the form of a first-order system by introducing new coordinates
\[
  y= \begin{pmatrix}
    y_1 \\
    y_2
  \end{pmatrix}= \begin{pmatrix}
    r \\
    r_x
  \end{pmatrix}.
\]
Then the differential system is the following 
\[
  y'= G(y)= \begin{pmatrix}
    y_2 \\
    \frac{J^2}{y_1^3} -a y_1 - bf(y_1)
  \end{pmatrix} = \begin{pmatrix}
    f_1(y_1,y_2) \\
    f_2(y_1,y_2)
  \end{pmatrix}.
\]
We start by finding the equilibrium points $y$ such that $G(y)=0.$ Then we find the isoclines $I_0$ and $I_{\infty}$, where 
\[
  I_{\alpha}= \left\{ (y_1,y_2) \in \R ^2:\frac{f_2 (y_1,y_2)}{f_1(y_1,y_2)}= \alpha \right\} .
\]
We start with the case $J=0$. We have 
\[
  I_0= \{ (y_1,y_2) \in \R^2 :y_2 \neq 0, a y_1 +b f(y_1)=0 \},
\]
and
\[
  I_{\infty}= \{ (y_1,y_2) \in \R^2: -a y_1 -b f(y_1) \neq 0, \quad y_2=0 \}.
\]
These isoclines $I_0$ and $I_{\infty}$ meet at the equilibrium points of the system and determine the regions where the trajectories are monotonic:
\begin{align*}
  Q_{++} &= \{ y \in \R ^2 , \quad f_1(y)>0 , \quad f_2(y)>0 \}.\\
  Q_{+-} &= \{ y \in \R ^2 , \quad f_1(y)>0 , \quad f_2(y)<0 \}.\\
  Q_{-+} &= \{ y \in \R ^2 , \quad f_1(y)<0 , \quad f_2(y)>0 \}.\\
  Q_{--} &= \{ y \in \R ^2 , \quad f_1(y)<0 , \quad f_2(y)<0 \}.
\end{align*}
Then we study the stability of the equilibrium points. The Jacobian matrix of $G$ is of the form 
\[
  J_G= \begin{pmatrix}
    0 & 1\\
    -a -b f'(y_1) & 0
  \end{pmatrix}.
\]
Classification of equilibrium points is determined by the eigenvalues
$\lambda_1$ and $\lambda_2$ of the Jacobian matrix $J_G$. Since the
trace of $J_G$ is $0$, the eigenvalues verify
$\lambda_1=-\lambda_2$. Depending on the discriminant of
$J_G$, two situations may arise. If $\lambda_1=-\lambda_2\neq 0$ (or $\lambda_1=-\lambda_2 =0$ and $b<0$) are real numbers,
then the point is a saddle.
If $\lambda_1=-\lambda_2\neq 0$ are purely imaginary
numbers   (or $\lambda_1=-\lambda_2 =0$ and $b>0$) then the point is a center.

We start with the defocusing case where $b<0$ and $a>0$. We know that $\frac{f(r)}{r}$ is an increasing function on $(0, \infty)$ therefore in this case there exists a unique $r_0>0$ such that $a r_0 +b f(r_0)=0$. Thus we have three equilibrium points $(0,0)$, $(r_0,0)$ and $(-r_0,0)$. 
Hence 
\begin{equation}
  \label{eq:iso_0}
  I_0= \{ (y_1,y_2) \in \R^2: y_1 \in \{0,\pm r_0\},y_2 \neq 0
  \},
\end{equation}
and 
\begin{equation}
  \label{eq:iso_inf}
  I_{\infty}= \{ (y_1,y_2) \in \R^2 : y_1 \notin \{0,\pm r_0\}, y_2=0\}.
\end{equation}
The characteristic polynomial of the Jacobian matrix $J_G$ is given by $P(\lambda)=\lambda^2+a+bf'(y_1)$. At the equilibrium point $(0,0)$ the eigenvalues are $\lambda= \pm i \sqrt{a}$ (recall that $a>0$). Since the eigenvalues are purely imaginary, the equilibrium point $(0,0)$ is a center. At the equilibrium points $(\pm r_0,0)$ we have $a+bf'(r_0)<0$, therefore the eigenvalues are non-zero real numbers of opposite signs and the equilibrium point is a saddle point. The phase portrait is given in Figure~\ref{fig:phase-portrait-def-J=0}.

For the focusing case where $b>0$ with $a>0$ we have only one equilibrium point $(0,0)$. The eigenvalues are given at the equilibrium point $(0,0)$ by $\lambda= \pm i \sqrt{a}$ and the equilibrium point $(0,0)$ is a center. The phase portrait is given on the left of Figure of~\ref{fig:phase-portrait-foc-J=0}. 

The last case is the focusing case with $b>0$ and $a<0$.
There exists a unique $r_0>0$ such that $a r_0 +b f(r_0)=0$ and we have three equilibrium points: $(0,0)$, $(r_0,0)$ and $(-r_0,0)$.
As before, the isoclines $I_0$ and $I_{\infty}$ are given by
\eqref{eq:iso_0} and~\eqref{eq:iso_inf}. At the equilibrium point
$(0,0)$ the eigenvalues are $\lambda= \pm \sqrt{-a}$ and the
equilibrium point $(0,0)$ is a saddle. At the equilibrium points $(\pm
r_0,0)$ the eigenvalues are non zero purely imaginary numbers hence the equilibrium point is a center. The phase portrait is given on the right of Figure~\ref{fig:phase-portrait-foc-J=0}.

\begin{figure}[htbp!]
  \centering
  \begin{tabular}{cc}
    \includegraphics[width=.5\textwidth]{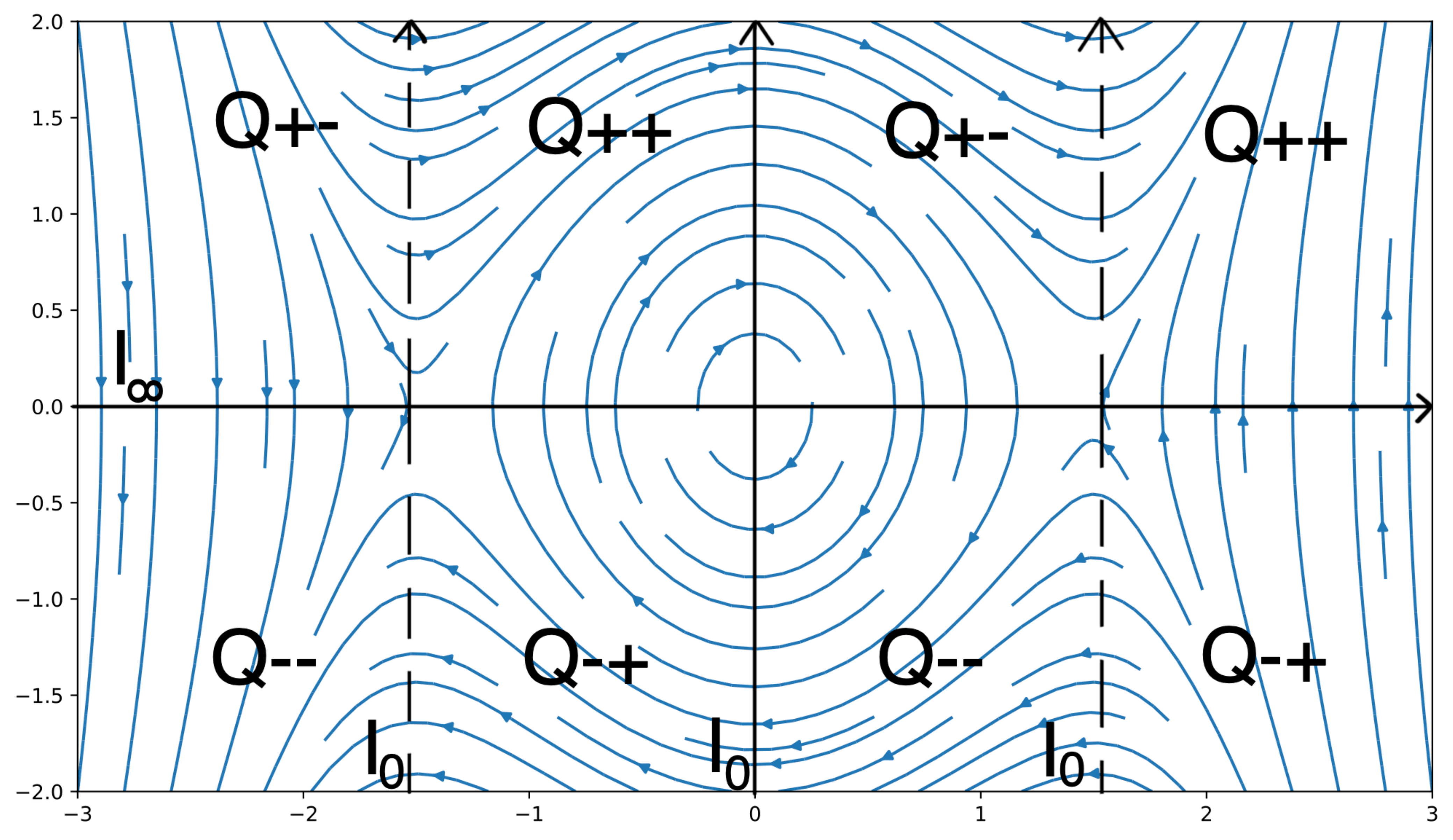} 
  \end{tabular}
  \caption{Examples of phase portraits of the solutions for the defocusing case when $J=0$.}
  \label{fig:phase-portrait-def-J=0}
\end{figure}

\begin{figure}[htbp!]
  \centering
  \begin{tabular}{cc}
    \includegraphics[width=.45\textwidth]{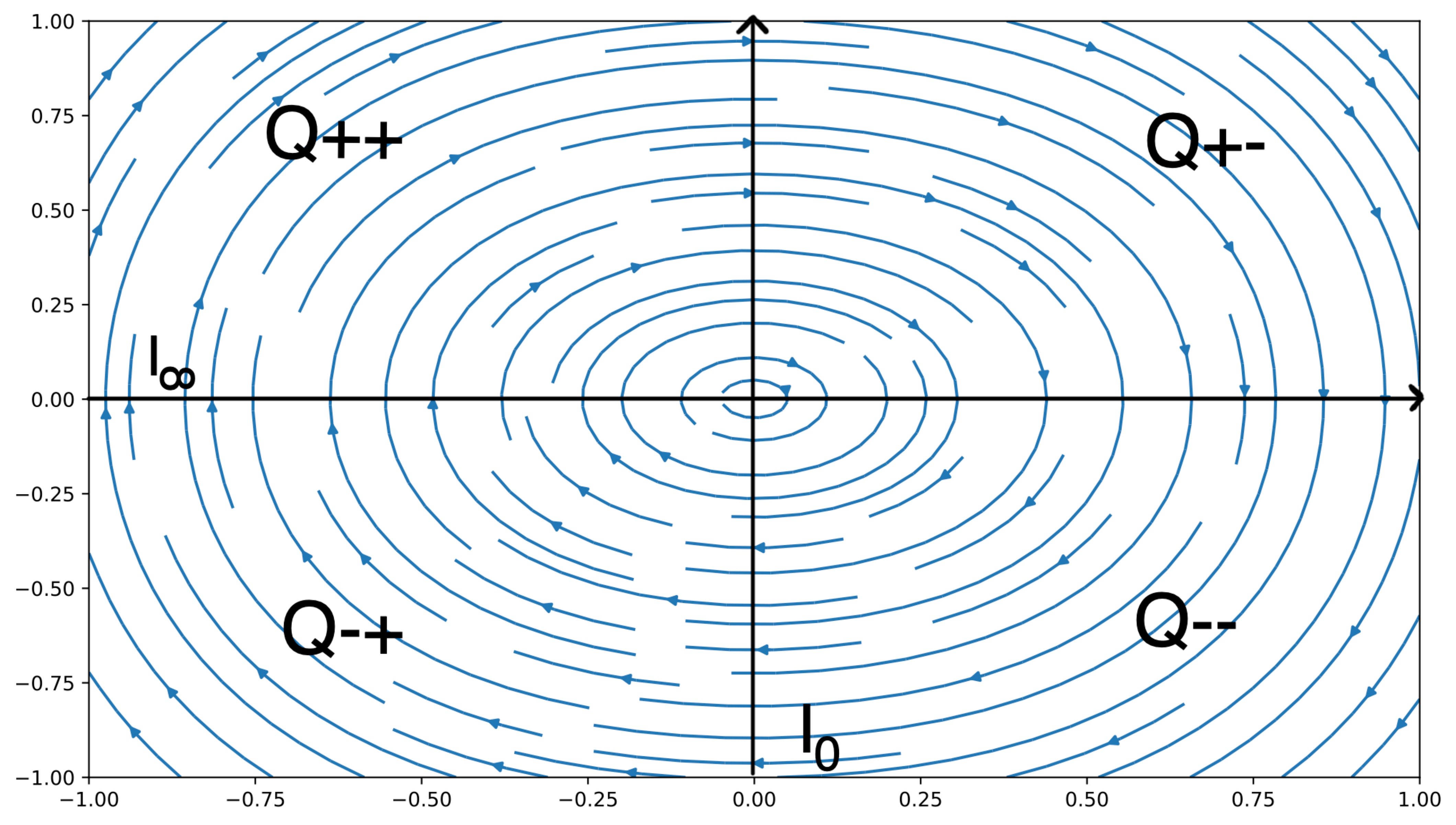}
    \includegraphics[width=.45\textwidth]{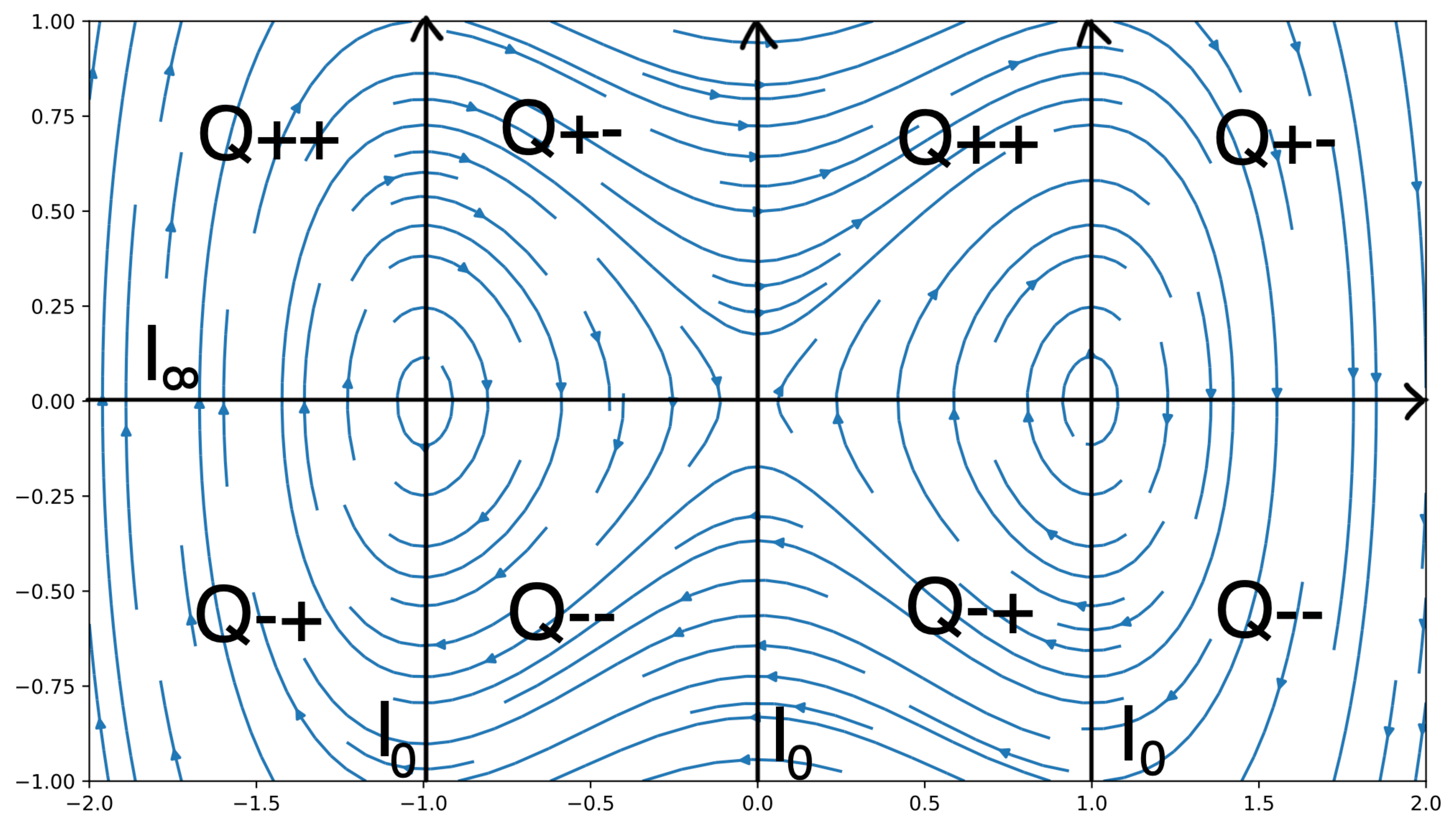}
  \end{tabular}
  \caption{Examples of phase portraits of the solutions for the focusing case when $J=0$.}
  \label{fig:phase-portrait-foc-J=0}
\end{figure}

The second case is when $J \neq 0$. We have 
\[
  I_0= \left\{ (y_1,y_2) \in \R^2: \frac{J^2}{y_1^3} -a y_1 -b f(y_1)=0 , y_2 \neq 0\right\},
\]
and
\[
  I_{\infty} = \{ (y_1,y_2) \in \R^2: \frac{J^2}{y_1^3} -a y_1 -b f(y_1) \neq 0, y_2=0 \}.
\]
The Jacobian matrix of $G$ is of the form 
\[
  J_G= \begin{pmatrix}
    0 & 1\\
    -3 \frac{J^2}{y_1^4}-a -b f'(y_1) & 0
  \end{pmatrix}.
\]

We start with the defocusing case where $b<0$, $a>0$ and $J^2<r_c$. In this case
the equation $\frac{J^2}{y_1^4}-ay_1-bf(y_1)=0$ has 2 solutions $r_Q$
and $r_q$ such that $0<r_Q<r_q$ ($r_Q=r_q$ if $J^2=r_c$). Thus we have
two equilibrium points
$(r_Q,0)$ and $(r_q,0)$. The characteristic polynomial of the Jacobian
matrix $J_G$ is given by $P(\lambda)=\lambda^2+
\frac{3J^2}{y_1^4}+a+bf'(y_1)$. 
On the first equilibrium point $(r_Q,0)$ we have $\lambda^2 =
-\frac{3J^2}{r_Q^4}-1+f'(r_Q)=-V''_J(r_Q)<0$, because $V_J(r)$ is
convex at $r_Q$ and therefore the eigenvalues are purely imaginary and
the equilibrium point $(r_Q,0)$ is a center. On the second equilibrium
point $(r_q,0)$ we have $\lambda^2 =
-\frac{3J^2}{r_q^4}-1+f'(r_q)=-V''_J(r_q)>0$, because $V_J(r)$ is
concave at $r_q$ and therefore the eigenvalues are non-zero real
numbers of opposite signs hence the equilibrium point is a saddle. The
phase portrait is given on the left of Figure
\ref{fig:phase-portrait-J-diff-0}. When $J^2=r_c$, the equilibrium
point is the only bounded solution and is a saddle-node. The phase portrait for this case is given in Figure~\ref{fig:phase-portrait-J-diff-0-crit}.

\begin{figure}[htbp!]
  \centering
  \begin{tabular}{cc}
    \includegraphics[width=.45\textwidth]{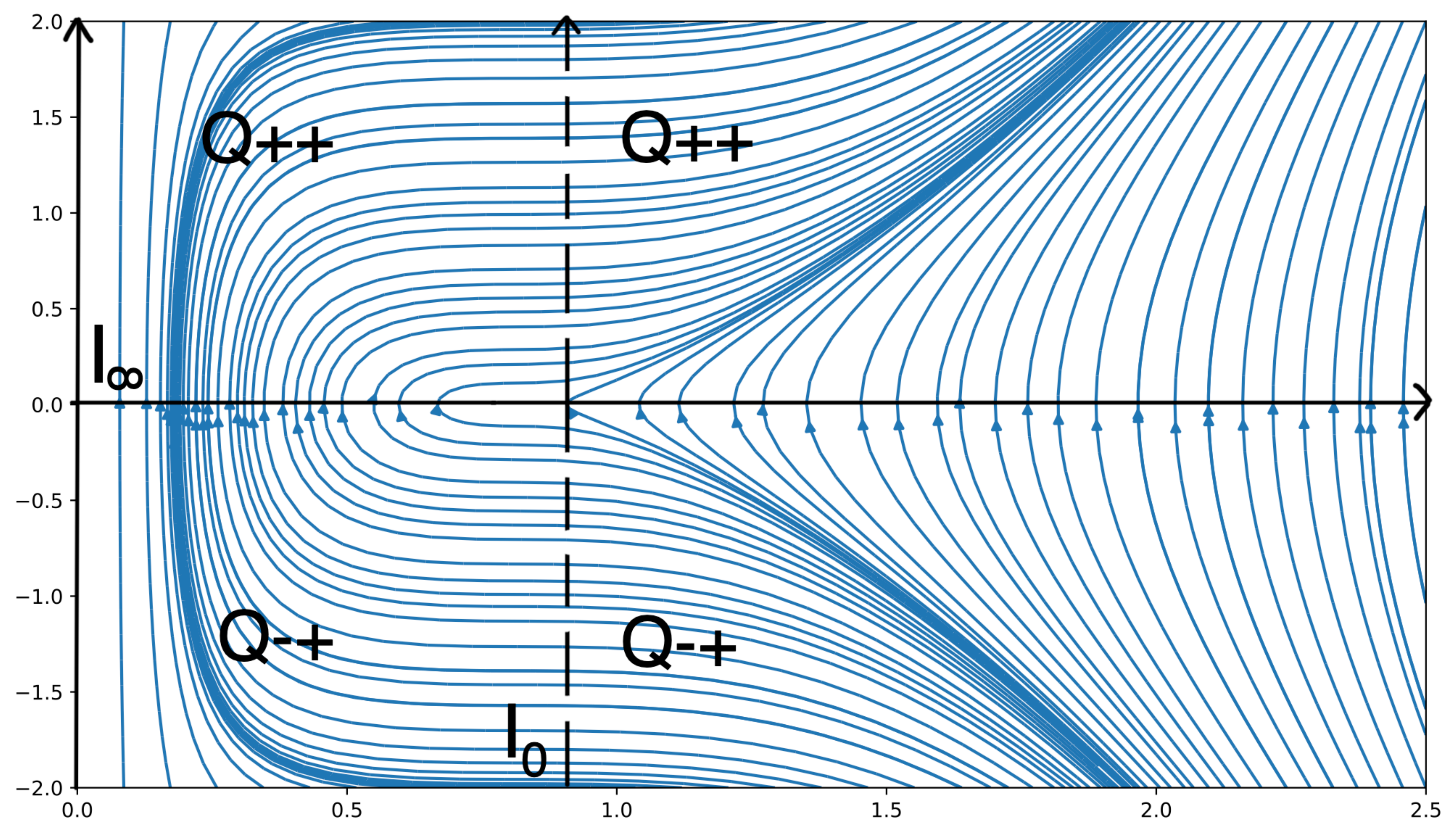}
  \end{tabular}
  \caption{Example of phase portrait of the solutions when $J^2=r_c$.}
  \label{fig:phase-portrait-J-diff-0-crit}
\end{figure}

For the focusing case where $b>0$, with both cases $a>0$ or $a<0$, the equation $\frac{J^2}{y_1^3}-ay_1-bf(y_1)=0$ has 1 solution $r_Q$. On the equilibrium point $(r_Q,0)$ we have $\lambda^2 = -\frac{3J^2}{r_Q^4}-a-f'(r_Q)<0$, therefore the eigenvalues are purely imaginary and the equilibrium point $(r_Q,0)$ is a center. The phase portrait for these two cases is given on the right of Figure~\ref{fig:phase-portrait-J-diff-0}.

\begin{figure}[htbp!]
  \centering
  \begin{tabular}{cc}
    \includegraphics[width=.45\textwidth]{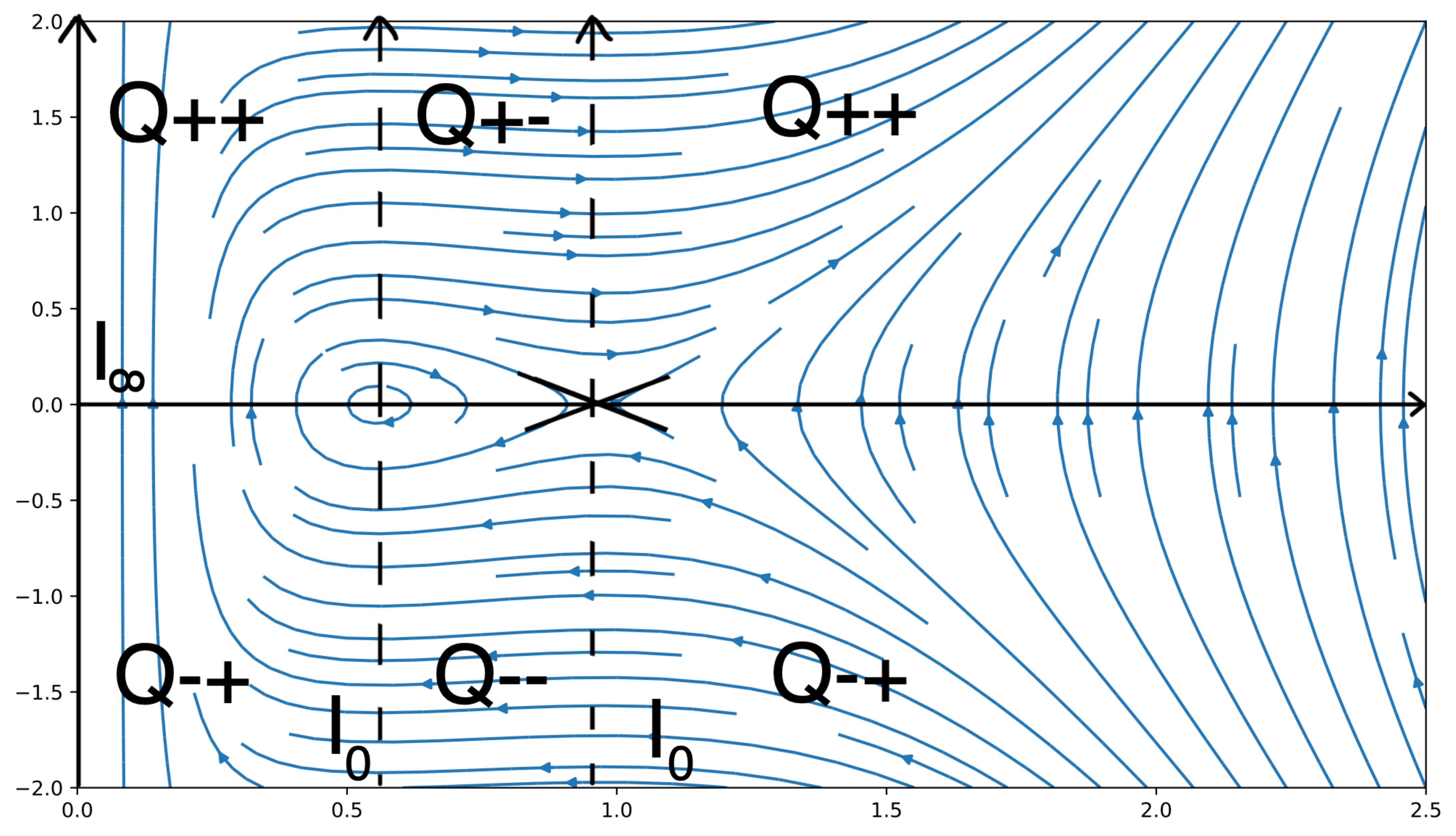}
    \includegraphics[width=.45\textwidth]{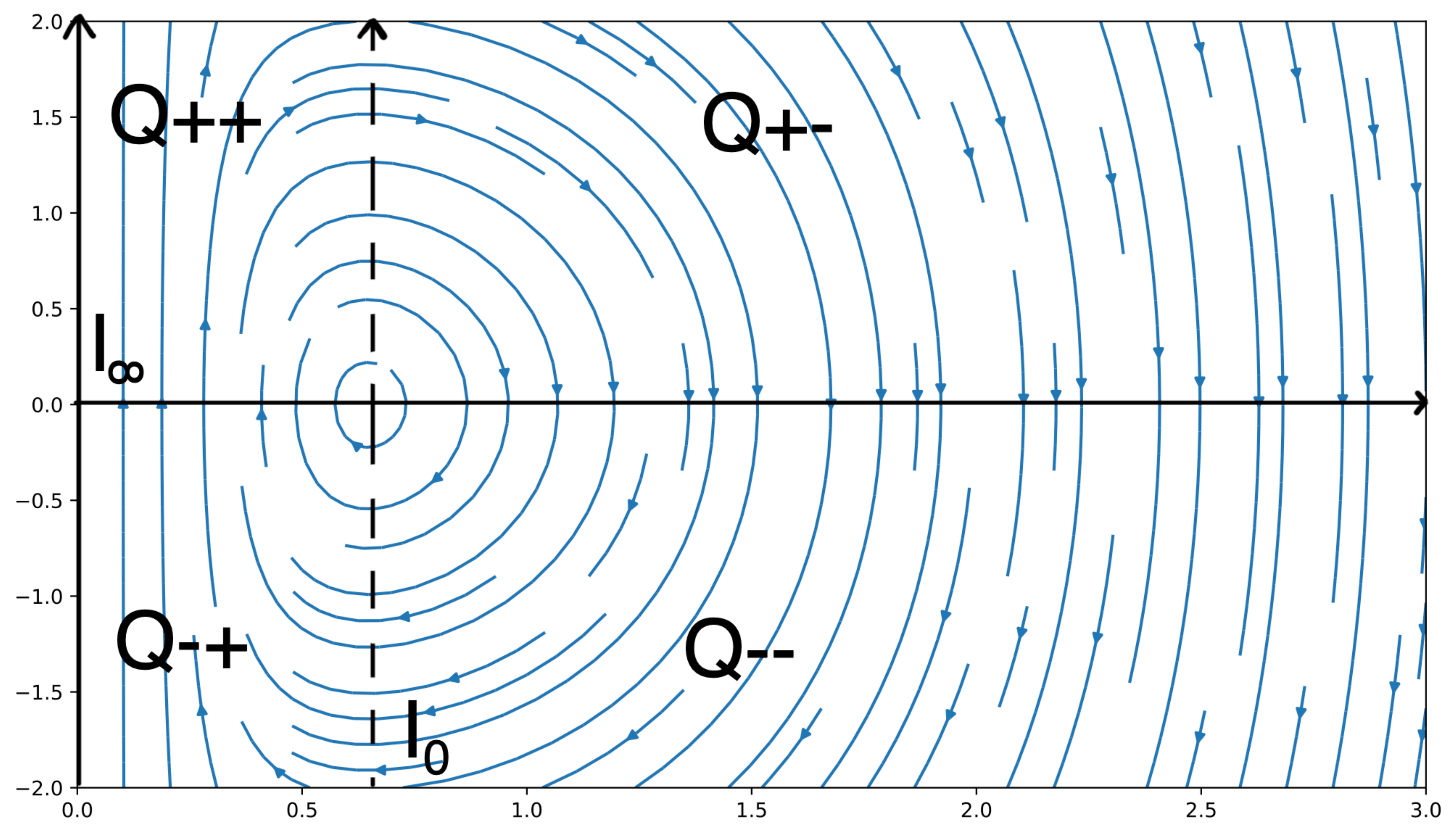}
  \end{tabular}
  \caption{Example of phase portraits of the solutions when $J \neq 0$.}
  \label{fig:phase-portrait-J-diff-0}
\end{figure}

\FloatBarrier

\section{The minimization problems}
\label{sec:minimization}

In this section, we study the variational properties of periodic
states. We start by establishing a Fourier rearrangement inequality
that will be useful later on. We then study the minimization of the
energy at fixed mass (and momentum) in various settings (periodic,
anti-periodic, focusing/defocusing nonlinearity). Finally, we
consider the minimization of the action over the Nehari manifold.

In addition to~\ref{item:h1}-\ref{item:h3}, we will sometime make use of some of the following additional assumptions on the nonlinearity.
\begin{enumerate}[start=4,label={(H\arabic*)}]
\item \label{item:h4} The function 
  \begin{equation} \label{eq:assumption-on-the-nonlinearity-h}
    h(s):=(sf(s)-2F(s))s^{-2},
  \end{equation}
  is strictly increasing on $(0, +\infty)$. 
  Moreover, $\lim \limits_{s \to 0} h(s)=0$ and $\lim \limits_{s \to 0} \frac{f(s)}{s}=0$.
\item \label{item:h5} There exist $M>0$ , $1<p<5$ and $s_0$ such that for all $s\geq s_0$ we have $|f(s)|\leq M s^p$.
\item \label{item:h6} For any $s>0$, the following inequality is satisfied:
  \begin{equation} \label{eq:assumption}
    s^2f''(s)>sf'(s)-f(s).
  \end{equation}
\item \label{item:h7} \label{} At infinity, we have
  \begin{equation} \label{eq:assumption-on-A(s)}
    \lim \limits_{s \to \infty}\left( \frac{f(s)}{s}-f'(s)\right)= - \infty.
  \end{equation}
\end{enumerate}

Most of these assumptions are related to the growth of the nonlinearity and are satisfied by sums of generic power nonlinearities. The main restriction may comes from~\ref{item:h5}, which imposes a mass-subcritical growth on the nonlinearity and is used for minimization of the energy on the mass constraint in the focusing case.


We will denote norms on $L^q(0,T)$ spaces by
\[
  \|u\|_{L^q}= \| u \| _{L^q(0,T)}= \left( \int_0^T |u|^q \right) ^{\frac{1}{q}},
\]
and the complex $L^2$ inner product by
\[
  (f,g)= \int_0^T f \bar{g} dx.
\]

We will be interested in spatially periodic solutions $\psi \in H^1_{loc}(\R) \cap P_T$, and anti-periodic solutions $\psi \in H^1_{loc}(\R) \cap A_T$, where
\[
  P_T=\{ f \in L^2_{loc} (\R): f(x+T) =f(x) \},
  \quad
  A_T= \{ f \in L^2 _{loc} (\R): f(x+T)=-f(x) \}.
\]


\subsection{A Fourier rearrangement inequality}

We start by presenting a lemma based on a Fourier rearrangement process that will be useful later on. This is a generalization of a result used in~\cite{GuLeTs17} in the cubic case.

\begin{lemma} \label{lem:new}
  Let $v \in H^1_{loc}(\R) \cap A_{\frac{T}{2}}$ and $p>1$ an odd integer. Then there exists $\tilde v \in H^1_{loc}(\R) \cap A_{\frac{T}{2}} $ such that:
  \[
    \tilde v (x) \in \R, \quad \|\tilde v\|_{L^2}=\|v\|_{L^2}, \quad \|\partial_x \tilde v \|_{L^2}=\|\partial_x v\|_{L^2}, \quad \|\tilde v\|_{L^{p+1}} \geq \|v\|_{L^{p+1}}.
  \]
\end{lemma}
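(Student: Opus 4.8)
The plan is to work entirely on the Fourier side. Since $v\in A_{T/2}$ is $T$-periodic, I would expand $v=\sum_{n}c_n e^{in\omega x}$ with $\omega=2\pi/T$; the anti-periodicity $v(x+T/2)=-v(x)$ forces $c_n=0$ for all even $n$, so only odd modes survive. I then define $\tilde v$ to be the real, even function whose Fourier coefficients are $\tilde c_n=\tilde c_{-n}=\sqrt{(|c_n|^2+|c_{-n}|^2)/2}$. By construction $\tilde c_{-n}=\overline{\tilde c_n}$, so $\tilde v$ is real valued, and $\tilde v$ retains only odd modes, hence $\tilde v\in A_{T/2}$. The identities $\|\tilde v\|_{L^2}=\|v\|_{L^2}$ and $\|\partial_x\tilde v\|_{L^2}=\|\partial_x v\|_{L^2}$ are immediate from Parseval, because within each pair $\{n,-n\}$ the construction preserves $|c_n|^2+|c_{-n}|^2$, and the $\dot H^1$ weight $n^2$ is the same for $n$ and $-n$. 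It thus remains to prove the single nontrivial inequality $\|\tilde v\|_{L^{p+1}}\ge\|v\|_{L^{p+1}}$, and here the hypothesis that $p$ is an \emph{odd} integer is essential: writing $m=(p+1)/2\in\N$, the exponent $p+1=2m$ is even.

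Because $p+1=2m$ is even, I would rewrite $\|v\|_{L^{p+1}}^{p+1}=\int_0^T|v^m|^2\,dx$ and apply Parseval to the function $v^m$, whose Fourier coefficients are the $m$-fold convolution $(c^{*m})_k=\sum_{a_1+\dots+a_m=k}c_{a_1}\cdots c_{a_m}$. This gives $\|v\|_{L^{p+1}}^{p+1}=T\sum_k|(c^{*m})_k|^2$. A triangle inequality inside each convolution coefficient yields $|(c^{*m})_k|\le(d^{*m})_k$ where $d_n:=|c_n|\ge0$, so $\|v\|_{L^{p+1}}^{p+1}\le T\sum_k\big((d^{*m})_k\big)^2$; the same computation run backwards identifies $T\sum_k((\tilde c^{*m})_k)^2$ with $\|\tilde v\|_{L^{p+1}}^{p+1}$ (using that $\tilde v$ is real). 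The problem is therefore reduced to the purely combinatorial inequality $\|d^{*m}\|_{\ell^2}\le\|\tilde c^{*m}\|_{\ell^2}$ for the two nonnegative sequences $d$ and $\tilde c$.

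The key step is to linearize this through autocorrelations. Writing $f^-_k=f_{-k}$, I would use $\|f^{*m}\|_{\ell^2}^2=\big((f*f^-)^{*m}\big)_0$ to obtain $\|d^{*m}\|_{\ell^2}^2=(D^{*m})_0$ and $\|\tilde c^{*m}\|_{\ell^2}^2=(\tilde D^{*m})_0$, where $D=d*d^-$ and $\tilde D=\tilde c*\tilde c$ (here $\tilde c^-=\tilde c$ since $\tilde c$ is even) are the nonnegative autocorrelation sequences. Since $(D^{*m})_0=\sum_{k_1+\dots+k_m=0}D_{k_1}\cdots D_{k_m}$ is a sum of nonnegative terms, it suffices to establish the \emph{pointwise} bound $D_k\le\tilde D_k$ for every $k$: monotonicity of convolution of nonnegative sequences then gives $(D^{*m})_0\le(\tilde D^{*m})_0$ and closes the argument. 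The pointwise bound is where the real content lies, and it follows from Cauchy--Schwarz: since $\tilde D_k=\tfrac12\sum_a\sqrt{(d_a^2+d_{-a}^2)(d_{a-k}^2+d_{k-a}^2)}$, the elementary inequality $\sqrt{(\alpha_1^2+\alpha_2^2)(\beta_1^2+\beta_2^2)}\ge\alpha_1\beta_1+\alpha_2\beta_2$ applied with $(\alpha_1,\alpha_2)=(d_a,d_{-a})$ and $(\beta_1,\beta_2)=(d_{a-k},d_{k-a})$ gives $\tilde D_k\ge\tfrac12\sum_a(d_ad_{a-k}+d_{-a}d_{k-a})=D_k$, the last equality by the substitution $a\mapsto-a$ in the second sum.

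I expect the main obstacle to be discovering this autocorrelation reformulation. A naive attempt to prove $\|d^{*m}\|_{\ell^2}\le\|\tilde c^{*m}\|_{\ell^2}$ by symmetrizing one frequency pair at a time fails, because the multilinear form couples distinct pairs and the optimal magnitude distribution in one pair depends on the others; likewise a direct term-by-term comparison of the two multilinear sums is simply false. Passing to the autocorrelations $D$ and $\tilde D$ is exactly what decouples the estimate into the single clean pointwise inequality $D_k\le\tilde D_k$. Finally, I would record that $v\in H^1_{loc}(\R)$ gives $(c_n)\in\ell^1$, so all the convolutions converge absolutely and every Parseval and rearrangement manipulation above is justified.
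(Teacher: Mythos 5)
Your proof is correct, and while it starts from exactly the same rearrangement $\tilde v_j=\sqrt{(|v_j|^2+|v_{-j}|^2)/2}$ and hinges on the same Cauchy--Schwarz inequality applied to the pairs $(d_a,d_{-a})$, the combinatorial bookkeeping is genuinely different from the paper's. The paper writes $|v|^{p+1}=(|v|^2)^N$ with $N=(p+1)/2$, expands the Fourier series of $|v|^2$ (whose $n$-th coefficient $w_n$ is the autocorrelation $\sum_j v_j\bar v_{j-n}$), raises it to the $N$-th power via binomial and multinomial expansions, integrates to isolate the resonant index tuples, and then concludes from the pointwise bound $|w_n|\leq\tilde w_n$ obtained by Cauchy--Schwarz; the cost is a rather heavy multiple sum over the resonance set $\sigma$. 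You instead apply Parseval to $v^m$ with $m=(p+1)/2$, reduce by the triangle inequality to comparing $\|d^{*m}\|_{\ell^2}$ with $\|\tilde c^{*m}\|_{\ell^2}$ for the nonnegative sequences $d=|c|$ and $\tilde c$, and then use the identity $\|f^{*m}\|_{\ell^2}^2=\bigl((f*f^-)^{*m}\bigr)_0$ to linearize everything into a single pointwise comparison $D_k\leq\tilde D_k$ of autocorrelations, closed by monotonicity of convolution of nonnegative sequences. This completely avoids the multinomial expansion and the resonance set, and is in my view cleaner and easier to verify; the paper's version has the minor advantage of comparing the complex coefficients $w_n$ of $|v|^2$ directly (one Cauchy--Schwarz step) rather than passing first to moduli, but both yield the same conclusion. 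Your closing remark that $(c_n)\in\ell^1$ for $v\in H^1$ properly justifies the absolute convergence of all the convolutions, a point the paper leaves implicit.
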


\begin{proof}
  Since $v \in H^1_{loc}(\R) \cap A_{\frac{T}{2}}$, its Fourier series expansion contains only terms indexed by odd integers:
  \[
    v(x)= \sum \limits_{\underset{j odd}{j \in \Z }} v_j e^{i j \frac{2 \pi}{T}x}.
  \]
  We define $\tilde v$ by its Fourier series expansion
  \[
    \tilde v(x)= \sum \limits_{\underset{j odd}{j \in \Z }} \tilde v_j e^{i j \frac{2 \pi}{T}x}, \quad \tilde v_j := \sqrt{\frac{|v_j|^2+|v_{-j}|^2}{2}}.
  \]
  It is clear that $\tilde v (x) \in \R$, and by Plancherel formula, we have
  \[
    \|\tilde v\|_{L^2}=\|v\|_{L^2}, \quad \|\partial_x \tilde v \|_{L^2}=\|\partial_x v\|_{L^2},
  \]
  so all we have to prove is that $\|\tilde v\|_{L^{p+1}} \geq \|v\|_{L^{p+1}}$. We have 
  \[
    |v(x)|^2= \sum \limits_{\underset{j odd}{j \in \Z }} |v_j|^2 + \sum \limits_{\underset{n \geq 2 }{n \in 2 \N }} w_n e^{in\frac{2 \pi}{T}x}+\bar w_n e^{-in\frac{2 \pi}{T}x},
  \]
  where we have defined \[
    w_n= \sum \limits_{\underset{j,k odd}{j>k, j+k=n}} v_j \bar v_{-k}+ v_k \bar v_{-j}.
  \]
  Let $ N=\frac{p+1}{2}$. We start with
  \begin{align*}
    |v|^{p+1}&= \left( |v|^2 \right) ^{\frac{p+1}{2}}\\
             &= \left( \sum \limits_{\underset{j odd}{j \in \Z }} |v_j|^2 + \sum \limits_{\underset{n \geq 2 }{n \in 2 \N }} w_n e^{in\frac{2 \pi}{T}x}+\bar w_n e^{-in\frac{2 \pi}{T}x} \right)^ N,\\
             &= \sum \limits_{k=0}^N \binom{N}{k} \left(\sum \limits_{\underset{j odd}{j \in \Z }} |v_j|^2 \right)^{N-k} \left(\sum \limits_{\underset{n \geq 2 }{n \in 2 \N }} w_n e^{in\frac{2 \pi}{T}x}+\bar w_n e^{-in\frac{2 \pi}{T}x} \right)^k.
  \end{align*}
  We have 
  \begin{multline*}
    \left(\sum \limits_{\underset{n \geq 2 }{n \in 2 \N }} w_n e^{in\frac{2 \pi}{T}x}+\bar w_n e^{-in\frac{2 \pi}{T}x} \right)^k \\
    = \sum \limits_{s=0}^k \binom{k}{s} \sum \limits_{p_1} \cdots \sum \limits_{p_k} \bar w_{p_1}\cdots \bar w_{p_s} \cdot w_{p_s}w_{p_{s+1}} \cdots w_{p_k} e^{i(-p_1- \cdots -p_s+p_{s+1}+ \cdots +p_k)\frac{2 \pi}{T}x},\\
    = \sum \limits_{s=0}^k \binom{k}{s} \sum \limits_{p_1} \cdots \sum \limits_{p_k} \left( \prod_{l=1}^s \bar w_{p_l} e^{-i p_l \frac{2 \pi }{T}x} \right) \left( \prod_{l=s+1}^k w_{p_l} e^{i p_l \frac{2 \pi }{T}x} \right),
  \end{multline*}
  where we use the convention
  \[
    \prod_{l=1}^0 \bar w_{p_l} e^{-i p_l \frac{2 \pi }{T}x}=1, \quad \prod_{l=k+1}^k w_{p_l} e^{i p_l \frac{2 \pi }{T}x}=1.
  \]
  Then we have
  \begin{multline*}
    \frac{1}{T} \int_0^T |v|^{p+1} dx 
    \\= \frac{1}{T} \int_0^T \sum \limits_{k=0}^N \binom{N}{k} \left(\sum \limits_{\underset{j odd}{j \in \Z }} |v_j|^2 \right)^{N-k} \left(\sum \limits_{\underset{n \geq 2 }{n \in 2 \N }} w_n e^{in\frac{2 \pi}{T}x}+\bar w_n e^{-in\frac{2 \pi}{T}x} \right)^k dx ,\\
    = \sum \limits_{k=0}^N \binom{N}{k} \left(\sum \limits_{\underset{j odd}{j \in \Z }} |v_j|^2 \right)^{N-k} \cdot
    \\ \frac{1}{T} \int_0^T \sum \limits_{s=0}^k \binom{k}{s} \sum \limits_{p_1}...\sum \limits_{p_k} \left( \prod_{l=1}^s \bar w_{p_l} e^{-i p_l \frac{2 \pi }{T}x} \right) \left( \prod_{l=s+1}^k w_{p_l} e^{i p_l \frac{2 \pi }{T}x} \right)dx, \\
    = \sum \limits_{k=0}^N \binom{N}{k} \left(\sum \limits_{\underset{j odd}{j \in \Z }} |v_j|^2 \right)^{N-k} \sum \limits_{s=0}^k \binom{k}{s} \sum \limits_{p_1, ... ,p_n \in \sigma} \left( \prod_{l=1}^s \bar w_{p_l} \right) \left( \prod_{l=s+1}^k w_{p_l} \right),
  \end{multline*}
  where $ \sigma = \{ (p_1,...,p_n): \exists \alpha \in \{0,1\}^n
  : \sum \limits_{j} (-1)^{\alpha_j} p_j=0\} $, and where we have used the fact that for $n \in \N, n \neq 0$, we have 
  \[
    \int_0^T e^{in\frac{2 \pi }{T}x} dx=0.
  \]
  On the other hand, we observe that
  \begin{equation} \label{eq:w_n}
    w_n= \sum \limits_{\underset{j,k odd}{j>k, j+k=n}} \begin{pmatrix}
      v_j\\
      \tilde v_{-j}
    \end{pmatrix} . \begin{pmatrix}
      v_k\\
      \tilde v_k
    \end{pmatrix},
  \end{equation}
  where the . denotes the complex vector scalar product. Therefore,
  \begin{align*}
    |w_n| & \leq \sum \limits_{\underset{j,k odd}{j>k, j+k=n}} \left| \begin{pmatrix}
        v_j\\
        \tilde v_{-j}
      \end{pmatrix} \right| \left| \begin{pmatrix}
        v_k\\
        \tilde v_k
      \end{pmatrix} \right|= \sum \limits_{\underset{j,k odd}{j>k, j+k=n}} \sqrt{2 \tilde v _j^2} \sqrt{2 \tilde v_k^2}\\
          &= 2 \sum \limits_{\underset{j,k odd}{j>k, j+k=n}} \tilde v_j \tilde v_k = \tilde w_n,
  \end{align*}
  where by $\tilde w_n$, we denote the quantity defined similarly as in~\eqref{eq:w_n} for $ (\tilde v_j)$.
  Therefore,
  \begin{multline*}
    \frac{1}{T} \int_0^T |v|^{p+1} dx
    \\ \leq \sum \limits_{k=0}^N \binom{N}{k} \left(\sum \limits_{\underset{j odd}{j \in \Z }} |v_j|^2 \right)^{N-k} \sum \limits_{s=0}^k \binom{k}{s} \sum \limits_{p_1, ... ,p_n \in \sigma} \left( \prod_{l=1}^s |\bar w_{p_l}| \right) \left( \prod_{l=s+1}^k |w_{p_l}| \right),\\
    \leq \sum \limits_{k=0}^N \binom{N}{k} \left(\sum \limits_{\underset{j odd}{j \in \Z }} |v_j|^2 \right)^{N-k} \sum \limits_{s=0}^k \binom{k}{s} \sum \limits_{p_1, ... ,p_n \in \sigma} \left( \prod_{l=1}^s \tilde w_{p_l} \right) \left( \prod_{l=s+1}^k \tilde w_{p_l} \right),\\ 
    = \frac{1}{T} \int_0^T |\tilde v|^{p+1} dx,
  \end{multline*}
  which concludes the proof.
\end{proof}

\subsection{Minimization on the mass constraint}

We now consider our first set of variational problems. 
Let $m>0$. A common variational problem is to minimize the energy at fixed mass:
\begin{equation} \label{min1.1}
  \min \{ \mathcal{E}(u): u \in H^1_{loc}(\R)\cap P_T,\; M(u)=m \}. 
\end{equation}
Since the momentum is also conserved for~\eqref{eq:nls4}, it is natural to consider the problem with a further momentum constraint:
\begin{equation} \label{min1.2}
  \min \{ \mathcal{E}(u): u \in H^1_{loc}(\R) \cap P_T, \; M(u)=m, \; P(u)=0 \}.
\end{equation}
The minimization problems~\eqref{min1.1} and~\eqref{min1.2} seek to find functions $u$ which minimize the energy subject to the constraint that the mass is fixed and, in the case of~\eqref{min1.2}, the momentum is also zero.
Note that when we minimize the energy with fixed mass and fixed momentum $p \neq 0$ the problem is more complicated.
In our work we will only focus on the case $p=0$.
\subsubsection{The focusing case in $P_T$ }
Assume that $b>0$. 
\begin{proposition} \label{prop:focusing-case-existence-min}
  Assume that $f$ verifies~\ref{item:h1}-\ref{item:h3} and~\ref{item:h5}.
  For all $m>0$, the minimization problem~\eqref{min1.1} admits
  a real minimizer which is also a minimizer of the minimization problem~\eqref{min1.2}. The minimal energy is finite and negative.
\end{proposition}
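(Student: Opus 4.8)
The plan is to run the direct method of the calculus of variations, exploiting the compactness of Sobolev embeddings on the bounded domain $[0,T]$, and then to reduce to a real-valued minimizer by replacing any minimizer $u$ by its modulus $|u|$. The negativity and the link between the two problems will follow almost for free once these two ingredients are in place.

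First I would establish that the infimum in~\eqref{min1.1} is finite through a coercivity estimate. From~\ref{item:h5}, the growth bound $|f(s)|\leq Ms^p$ for $s\geq s_0$ together with the continuity of $F$ on $[0,s_0]$ yields $F(s)\leq C(1+s^{p+1})$ for all $s\geq 0$, hence $\int_0^T F(u)\,dx \leq CT + C\|u\|_{L^{p+1}}^{p+1}$. I would then invoke the Gagliardo--Nirenberg inequality on the periodic interval,
\[
  \|u\|_{L^{p+1}}^{p+1} \leq C_1 \|u_x\|_{L^2}^{\frac{p-1}{2}} \|u\|_{L^2}^{\frac{p+3}{2}} + C_2 \|u\|_{L^2}^{p+1}.
\]
On the constraint set $\|u\|_{L^2}^2 = 2m$ this gives
\[
  \mathcal{E}(u) \geq \tfrac12 \|u_x\|_{L^2}^2 - bC_1(2m)^{\frac{p+3}{4}} \|u_x\|_{L^2}^{\frac{p-1}{2}} - C(m).
\]
Since~\ref{item:h5} forces $p<5$, the exponent $\tfrac{p-1}{2}$ is strictly less than $2$, so Young's inequality absorbs the middle term into $\tfrac14\|u_x\|_{L^2}^2$ up to a constant, giving $\mathcal{E}(u)\geq \tfrac14\|u_x\|_{L^2}^2 - C(m)$. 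This simultaneously shows that the infimum is finite and that every minimizing sequence is bounded in $H^1(0,T)$. Negativity is then immediate from the constant test function $u_c\equiv\sqrt{2m/T}$, which has mass $m$ and satisfies $\mathcal{E}(u_c)=-bT\,F(\sqrt{2m/T})<0$, because $b>0$ and, by~\ref{item:h3}, $f(s)>0$ for $s>0$, whence $F(\sqrt{2m/T})>0$.

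Next I would extract a minimizer. Given a minimizing sequence $(u_n)$, bounded in $H^1$ by the previous step, the compact embedding $H^1(0,T)\hookrightarrow C([0,T])$ furnishes, along a subsequence, $u_n\rightharpoonup u$ weakly in $H^1$ and $u_n\to u$ uniformly, hence in every $L^q$. The uniform convergence and the continuity of $F$ give $\int_0^T F(u_n)\,dx\to\int_0^T F(u)\,dx$ and preserve the constraint $\|u\|_{L^2}^2=2m$; weak lower semicontinuity of the Dirichlet term gives $\|u_x\|_{L^2}^2\leq\liminf\|u_{n,x}\|_{L^2}^2$. Therefore $\mathcal{E}(u)\leq\liminf\mathcal{E}(u_n)$, and since $u$ is admissible it is a minimizer of~\eqref{min1.1}.

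Finally I would make the minimizer real and conclude for~\eqref{min1.2}. Because $f(z)=g(|z|^2)z$, the potential $F$ depends only on $|z|$, so $\int_0^T F(|u|)\,dx=\int_0^T F(u)\,dx$, while $\||u|\|_{L^2}=\|u\|_{L^2}$ and the diamagnetic inequality $\bigl|\,|u|_x\bigr|\leq|u_x|$ a.e. gives $\||u|_x\|_{L^2}\leq\|u_x\|_{L^2}$. Hence $\mathcal{E}(|u|)\leq\mathcal{E}(u)$, so the admissible function $|u|$ is again a minimizer, now real-valued. Being real and periodic, $|u|$ satisfies $P(|u|)=\tfrac12\Im\int_0^T |u|\,|u|_x\,dx=0$, so it is admissible for~\eqref{min1.2}; as the feasible set of~\eqref{min1.2} is contained in that of~\eqref{min1.1}, the two infima coincide and $|u|$ minimizes both. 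The main obstacle is the coercivity estimate: it is exactly the mass-subcriticality $p<5$ of~\ref{item:h5} that lets the gradient term dominate, and without it the energy would be unbounded below on the mass constraint; the remaining steps are routine thanks to the compactness available on the bounded domain.
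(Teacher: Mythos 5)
Your proposal is correct and follows essentially the same route as the paper: negativity via the constant test function, coercivity from the subcritical Gagliardo--Nirenberg bound (your Young's-inequality absorption is just a cleaner packaging of the paper's contradiction argument with the exponent $\alpha(p+1)-2<0$), compactness of the periodic Sobolev embedding to pass to the limit, and reduction to a real non-negative minimizer via $|u|$ to handle the momentum constraint. No gaps.
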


\begin{proof}
  Without loss of generality, we can restrict the minimization to real valued non-negative functions. Indeed, if $u \in H^1_{loc}(\R) \cap P_T $, then $|u| \in H^1_{loc}(\R) \cap P_T $ and we have $ \| \partial _x |u| \| _{L^2} \leq \| \partial _x u \| _{L^2}.$
  This implies that~\eqref{min1.1} and~\eqref{min1.2} share the same minimizers. 
  
  Let us prove that the minimal energy is negative. To do so, let $\phi_{m,0} \equiv \sqrt{\frac{2m}{T}}$ be a test function. We have
  \[ 
    M(\phi_{m,0})=m, \quad \mathcal{E}(\phi_{m,0})= - \int_0^T F \left( \sqrt{\frac{2m}{T}} \right)dx= -TF \left( \sqrt{\frac{2m}{T}} \right)<0, 
  \] 
  where the last inequality holds because $F(z)>0$ for any $z\in\mathbb C$ by the assumptions on $f$. 
  
  Consider now a minimizing sequence $(u_n) \subset H^1_{loc}(\R) \cap P_T $ for~\eqref{min1.1}. We first prove that it is bounded in $H^1_{loc}(\R) \cap P_T.$ 
  To this aim, we rely on the Gagliardo-Nirenberg inequality: for any $u \in H^1_{loc}(\R) \cap P_T$, we have
  \[ 
    \|u\|^{p+1}_{L^{p+1}} \lesssim \|u_x\|^{\alpha (p+1)}_{L^2} \|u\|^{(1-\alpha)(p+1)}_{L^2} + \|u\|^{p+1}_{L^2},
  \]
  where $\alpha = \frac12 -\frac{1}{p+1}.$
  We also know that there exists $p>1$ such that 
  \[
    F(u)\leq |F(u)| \lesssim |u|^2+|u|^{p+1}.
  \] 
  Consequently, for any $u \in H^1_{loc}(\R) \cap P_T$, such that $M(u)=m$, we have
  \begin{align*}
    \mathcal{E}(u)&= \frac{1}{2} \|u_x\|^2_{L^2} - \int_0^T F(u) dx,\\
                  & \geq \frac{1}{2} \|u_x\|^2_{L^2}-C(\|u\|^2_{L^2} -\|u\|^{p+1}_{L^{p+1}}),\\
                  & \geq \frac{1}{2} \|u_x\|^2_{L^2}-Cm -C\|u_x\|^{\alpha(p+1)}_{L^2} m^{\frac{(1-\alpha)(p+1)}{2}}-Cm^{\frac{p+1}{2}},\\
                  &= \|u_x\|^2_{L^2} \left( \frac{1}{2} - C\|u_x\|^{\alpha(p+1)-2} m^{\frac{(1-\alpha)(p+1)}{2}} \right) - C(m^{\frac{p+1}{2}}-m).
  \end{align*}
  The previous inequality implies the boundedness of $\| \partial_x u_n \|_{L^2}$ when $1<p<5$. Indeed, by contradiction, we suppose that $\|\partial_x u_n\|_{L^2} \rightarrow \infty$. Since $1<p<5$, we have $\alpha(p+1)-2<0$, and this implies that $\|\partial_x u_n\|^{\alpha(p+1)-2}_{L^2} \rightarrow 0$, and therefore $\mathcal{E}(u_n) \rightarrow \infty$, which is a contradiction with the minimizing nature of $(u_n)$. Moreover, the same arguments show that if $1<p<5$, then the minimal energy is finite. Hence the sequence
  $(u_n)$ is bounded in $H^1_{loc}(\R) \cap P_T$.
  Therefore up to a subsequence, $(u_n)$ converges weakly in $H^1_{loc}(\R) \cap P_T$ and strongly in $L^2_{loc} \cap P_T$ and $L^{p+1}_{loc} \cap P_T$ towards $u_{\infty} \in H^1_{loc}(\R) \cap P_T.$ We now show that $(u_n)$ converges strongly towards $u_\infty$ in $H^1_{loc}(\R) \cap P_T$. 
  By weak convergence, we have 
  \[
    \|\partial_x u_{\infty}\|^2_{L^2} \leq \lim\limits_{n \rightarrow +\infty} \|\partial_x u_n\|^2_{L^2}.
  \]
  Up to a subsequence, we also have $F(u_n) \rightarrow F(u_\infty)$ almost everywhere. Moreover, we have
  \begin{align*}
    |F(u_n)| &\lesssim |u_n|^2+|u_n|^{p+1}\\
             &\lesssim \|u_n\|^2_{L^{\infty}} +\|u_n\|^{p+1}_{L^{\infty}}\\
             &\lesssim \|u_n\|^2_{H^1} +\|u_n\|^{p+1}_{H^1} \leq \max_{n\in\mathbb N}\{\|u_n\|^2_{H^1} +\|u_n\|^{p+1}_{H^1}\}<\infty.
  \end{align*}
  Then by the dominated convergence theorem we have
  \[
    \lim\limits_{n \rightarrow +\infty} \int_0^T F(u_n) dx = \int_0^T F(u)dx. 
  \]
  Combining the previous arguments, we obtain
  \[
    \mathcal{E}(u_{\infty}) \leq \lim\limits_{n \rightarrow +\infty}\mathcal{E}(u_n),\quad M(u_n)=m,
  \]
  which in turn implies 
  \[
    \|\partial_x u_{\infty}\|^2_{L^2} = \lim\limits_{n \rightarrow +\infty} \|\partial_x u_n\|^2_{L^2}.
  \]
  Therefore the convergence from $(u_n)$ to $u_{\infty}$ is also strong in $ H^1_{loc}(\R) \cap P_T.$
\end{proof}

\begin{proposition}
  \label{prop:focusing-case-P_T}
  Assume that $f$ verifies~\ref{item:h1}-\ref{item:h3} and~\ref{item:h4}-\ref{item:h7}.
  There exists $\tilde m>0$ such that if $m>\tilde{m}$, then the minimizer of \eqref{min1.1} is not a constant, the associated Lagrange multiplier verifies $a<0$, the minimizer is positive.
\end{proposition}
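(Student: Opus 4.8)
The plan is to treat the three assertions separately, with only non-constancy requiring $m$ large. Throughout, let $u$ be a minimizer of \eqref{min1.1} given by Proposition~\ref{prop:focusing-case-existence-min}, which we may take real-valued and non-negative. Since $m>0$ forces $u\not\equiv 0$, the differential $M'(u)=u$ of the constraint does not vanish, so the Lagrange multiplier rule provides $a\in\R$ with $u_{xx}+au+bf(u)=0$ in the weak sense; bootstrapping (using $H^1_{loc}(\R)\cap P_T\hookrightarrow C^0$, whence $au+bf(u)\in C^0$) makes $u$ a classical $C^2$ solution of \eqref{eq:ode4}.

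First I would establish positivity by a Cauchy-uniqueness argument. Since $f(z)=g(|z|^2)z$ we have $f(0)=0$, and~\ref{item:h1} together with~\ref{item:h3} force $f'(s)\to 0$ as $s\to 0^+$ (monotonicity of $f'$ plus $f(s)/s\to0$ via the mean value theorem), so $s\mapsto as+bf(s)$ is locally Lipschitz near $0$. If $u(x_0)=0$ for some $x_0$, then $x_0$ is a minimum of the non-negative function $u$, hence $u_x(x_0)=0$, and uniqueness for \eqref{eq:ode4} with data $(0,0)$ forces $u\equiv 0$, contradicting $M(u)=m>0$; thus $u>0$. The sign of $a$ then follows by integrating \eqref{eq:ode4} over a period: periodicity kills $\int_0^T u_{xx}\,dx$, leaving
\[
  a\int_0^T u\,dx=-b\int_0^T f(u)\,dx,
\]
and since $u>0$ gives $f(u)>0$ by~\ref{item:h3}, with $b>0$ the right-hand side is negative while $\int_0^T u\,dx>0$, so $a<0$.

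For non-constancy I would argue by contradiction. The only non-negative constant of mass $m$ is $u_c\equiv\sqrt{2m/T}$, with multiplier $a_c=-bf(u_c)/u_c$. If $u_c$ were a minimizer, then for any real $v$ with $\int_0^T v\,dx=0$ the mass-preserving path $u_\eps=\sqrt{m/M(u_c+\eps v)}\,(u_c+\eps v)$ would satisfy $\mathcal{E}(u_\eps)\geq\mathcal{E}(u_c)$ for small $\eps$. Computing the constrained second variation (whose first-order term vanishes because $u_c$ is a constrained critical point) gives
\[
  \frac{d^2}{d\eps^2}\Big|_{\eps=0}\mathcal{E}(u_\eps)=\int_0^T v_x^2\,dx+b\left(\frac{f(u_c)}{u_c}-f'(u_c)\right)\int_0^T v^2\,dx.
\]
Taking $v(x)=\cos(2\pi x/T)$, so that $\int_0^T v_x^2\,dx=(2\pi/T)^2\int_0^T v^2\,dx$, the right-hand side equals $\big[(2\pi/T)^2+b(f(u_c)/u_c-f'(u_c))\big]\|v\|_{L^2}^2$. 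By~\ref{item:h7}, $f(u_c)/u_c-f'(u_c)\to-\infty$ as $m\to\infty$, so there is $\tilde m>0$ with this bracket negative for $m>\tilde m$; then $\mathcal{E}(u_\eps)<\mathcal{E}(u_c)$ for small $\eps\neq0$, contradicting minimality. Hence the minimizer is non-constant.

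The main obstacle is this last step: one must justify rigorously the second-order expansion along the normalized path $u_\eps$, in particular that $\lambda'(0)=0$ (so that the tangent vector is exactly $v$), that the first-order term vanishes, and that the displayed quadratic form is genuinely the relevant constrained second variation. By comparison, positivity and the sign of $a$ are essentially immediate once the Euler--Lagrange equation and the regularity of $u$ are in hand.
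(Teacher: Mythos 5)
Your argument is correct, and it diverges from the paper's proof in two of its three parts. For non-constancy you arrive at exactly the paper's quantity $A\left(\sqrt{2m/T}\right)=\frac{4\pi^2}{T^2}+b\left(\frac{f(u_c)}{u_c}-f'(u_c)\right)$, but you reach it by testing the constrained second variation along the mass-normalized path with $v=\cos(2\pi x/T)$, whereas the paper invokes the fact that the linearized operator $-\partial_{xx}-a-bf'(u_c)$ at a constrained minimizer has Morse index at most one and exhibits negative eigenvalues at $n=0$ and $n=\pm1$; the two are the same computation in different clothing, and the expansion you flag as the ``main obstacle'' is routine (with $\int_0^T v\,dx=0$ the normalization factor is $1-\frac{\eps^2\|v\|_{L^2}^2}{4m}+O(\eps^4)$, the first-order term vanishes since $\mathcal E'(u_c)=aM'(u_c)=au_c$, and the $O(\eps^2)$ correction to the path contributes exactly the $-a\|v\|_{L^2}^2$ term that converts $\mathcal E''$ into the Lagrangian's second variation). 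Note that you only use~\ref{item:h7} to get a threshold, while the paper additionally uses~\ref{item:h6} to make $A$ monotone and hence to pin down a sharp $m^*$ with $A(m^*)=0$; your weaker conclusion still suffices for the statement. For the sign of $a$ you integrate \eqref{eq:ode4} over a period, which is simpler and bypasses~\ref{item:h4} entirely; the paper instead writes $\|\partial_x u_\infty\|_{L^2}^2-b\int_0^T f(u_\infty)u_\infty\,dx=2\mathcal E(u_\infty)+b\int_0^T(2F(u_\infty)-f(u_\infty)u_\infty)\,dx$ and uses the negativity of the minimal energy together with $2F(s)-sf(s)<0$ from~\ref{item:h4}. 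Your route requires knowing $u\geq 0$ and $u\not\equiv 0$ first (so that $\int_0^T f(u)\,dx>0$), but that is available from Proposition~\ref{prop:focusing-case-existence-min}. Finally, your Cauchy--Lipschitz argument for strict positivity (with the observation that \ref{item:h1} and~\ref{item:h3} sandwich $f'(s)$ between $f(s)/s$ and $2f(2s)/(2s)-f(s)/s$, both tending to $0$, so the right-hand side of the ODE is locally Lipschitz near the origin) fills in a point that the paper's proof asserts in the statement but does not actually argue.
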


\begin{remark}
  In the cubic case, it is known that for small enough values of $m$, the minimizer of the energy functional in this case is the constant function. 
\end{remark}

\begin{proof} 
  Since $u_{\infty}$ is a minimizer of~\eqref{min1.1}, there exists a Lagrange multiplier $a \in \R $ such that
  \[
    -\mathcal{E}'(u_{\infty})+a M'(u_{\infty})=0\]
  that is \[ \partial_{xx} u_{\infty} +a u_{\infty} + b f(u_{\infty})=0.
  \]
  Multiplying by $u_{\infty} $ and integrating (recall that the functions considered are assumed to be real), we find that
  \[ 
    a= \frac{\|\partial_x u_{\infty}\|^2_{L^2}- b\int_0^T f(u_{\infty})u_{\infty} dx}{\|u_{\infty}\|^2_{L^2}}. 
  \]
  Note that 
  \begin{align*}
    \|\partial_x u_{\infty}\|^2_{L^2}- b\int_0^T f(u_{\infty})u_{\infty} dx &= 2 \mathcal{E}(u_{\infty}) + 2b \int_0^T F(u_{\infty}) dx - \int_0^T b f(u_{\infty}) u_{\infty} dx \\
                                                                            &= 2\mathcal{E}(u_{\infty}) + b\int_0^T ( 2F(u_{\infty}) -f(u_{\infty}) u_{\infty}) dx,
  \end{align*}
  where $\mathcal{E}(u_{\infty}) <0 $ and $2F(u_{\infty}) -f(u_{\infty}) u_{\infty}<0$, by the assumption on~\eqref{eq:assumption-on-the-nonlinearity-h}. Therefore, we have
  \[ 
    a<0.
  \]
  We introduce an auxiliary function 
  \[
    A(s)= \frac{4 \pi^2}{T^2}+b \left( \frac{f(s)}{s}-f'(s) \right).
  \]
  By assumption~\eqref{eq:assumption}, we have
  \[
    A'(s)=b \left( \frac{f'(s)s-f(s)}{s^2}-f''(s) \right) <0.
  \]
  Therefore $A$ is a decreasing function, from $\frac{4 \pi^2}{T^2}$ to $-\infty$ from assumption~\eqref{eq:assumption-on-A(s)}. Let $m^{*}$ be such that $A(m^{*})=0$ and define 
  \[
    \tilde m=\frac{T m^{*2}}{2}.
  \]
  We want to prove that if $m>\tilde{m}$, then $u_{\infty}$ is not constant.
  
  By contradiction, we assume that $u_{\infty}$ is constant for $m>\tilde m$. Then we necessarily have $u_{\infty} \equiv \sqrt{\frac{2m}{T}}$. The Lagrange multiplier can also be computed and we find 
  \[
    a= \frac{- b\int_0^T f\left(\sqrt{\frac{2m}{T}} \right)\sqrt{\frac{2m}{T}} dx}{2m}= -b f\left(\sqrt{\frac{2m}{T}} \right)\sqrt{\frac{T}{2m}}.
  \]
  Since $u_{\infty}$ is supposed to be a constrained minimizer for~\eqref{min1.1}, the operator 
  \[ 
    -\partial_{xx} -a - b f'(u_{\infty})= - \partial_{xx} + b \sqrt{\frac{T}{2m}} f\left(\sqrt{\frac{2m}{T}} \right) - b f'\left(\sqrt{\frac{2m}{T}} \right),
  \]
  must have Morse Index at most 1, i.e, at most 1 negative eigenvalue. The eigenvalues are given for $n \in \Z$ by the following formula:
  \[ 
    \left(\frac{2 \pi n}{T} \right)^2+ b \sqrt{\frac{T}{2m}} f\left(\sqrt{\frac{2m}{T}} \right) - b f'\left(\sqrt{\frac{2m}{T}} \right), \quad \quad n \in \Z. 
  \]
  If $n=0$, the eigenvalue is negative:
  \[
    b \sqrt{\frac{T}{2m}} f\left(\sqrt{\frac{2m}{T}} \right) - b f'\left(\sqrt{\frac{2m}{T}} \right)<0.
  \]
  Indeed as $\frac{f(s)}{s} $ is an increasing function we have that for all $s>0$, $ \left(\frac{f(s)}{s}\right)'= \frac{f'(s)s-f(s)}{s^2}>0$.
  If $n=1$ the eigenvalue is of the form:
  \[ 
    \frac{4 \pi^2}{T^2} + b \sqrt{\frac{T}{2m}} f\left(\sqrt{\frac{2m}{T}} \right) -b f'\left(\sqrt{\frac{2m}{T}} \right)=A\left(\sqrt{\frac{2m}{T}} \right). 
  \]
  Recall that $A\left(\sqrt{\frac{2m}{T}}\right)$ is non-negative if and only if $\sqrt{\frac{2m}{T}} \leq m^{*}$ which is equivalent to $m \leq \tilde m$ which gives the contradiction. Therefore when $m> m^*$ the minimizer $u_{\infty}$ is not constant, which concludes the proof.
\end{proof}

\subsubsection{The defocusing case in $P_T$}
Assume that $b<0$. 
\begin{proposition} \label{prop:defocusing-case-P_T}
  Assume that $f$ verifies~\ref{item:h1}-\ref{item:h3}.
  For all $m \in (0,\infty)$ the constrained minimization problems~\eqref{min1.1} and~\eqref{min1.2} have the same unique (up to phase shift) minimizer, which is the constant function $u_{\infty} \equiv \sqrt{\frac{2m}{T}}.$
\end{proposition}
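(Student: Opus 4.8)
The plan is to show directly that, for every admissible $u$, the energy is bounded below by its value at the constant function $u_\infty\equiv\sqrt{2m/T}$, with equality only for $u_\infty$ up to a phase. Since $b<0$, I write
\[
  \mathcal{E}(u)=\frac12\|u_x\|_{L^2}^2+|b|\int_0^T F(u)\,dx,
\]
and observe that both terms are nonnegative, the first trivially and the second because $F\geq 0$ (as $f\geq 0$). The gradient term vanishes exactly when $u$ is constant, so the entire difficulty is concentrated in showing that, under the constraint $M(u)=m$, the potential term $\int_0^T F(u)\,dx$ is minimized by the constant profile.

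The key step is a convexity observation. Since $F$ depends only on $|u|$, I write $\int_0^T F(u)\,dx=\int_0^T G(|u|^2)\,dx$ with $G(r):=F(\sqrt r)$. A direct computation gives $G'(r)=\tfrac12\,f(\sqrt r)/\sqrt r$, which is increasing in $r$ precisely because $s\mapsto f(s)/s$ is increasing by~\ref{item:h3}; hence $G$ is convex on $[0,\infty)$. Applying Jensen's inequality to the probability measure $\frac1T\,dx$ on $[0,T]$, together with $\int_0^T|u|^2\,dx=2m$, yields
\[
  \frac1T\int_0^T G(|u|^2)\,dx\geq G\!\left(\frac1T\int_0^T|u|^2\,dx\right)=G\!\left(\frac{2m}{T}\right),
\]
that is $\int_0^T F(u)\,dx\geq T\,F\!\left(\sqrt{2m/T}\right)$, which is exactly the potential energy of $u_\infty$.

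Combining the two bounds gives $\mathcal{E}(u)\geq |b|\,T\,F\!\left(\sqrt{2m/T}\right)=\mathcal{E}(u_\infty)$ for every $u$ with $M(u)=m$. For the equality case I would argue that $\mathcal{E}(u)=\mathcal{E}(u_\infty)$ forces $\|u_x\|_{L^2}=0$, so $u$ is a constant $c$; the mass constraint then pins $|c|^2=2m/T$, leaving only the phase free, which gives uniqueness up to phase shift. Finally, since $u_\infty$ is a real constant, $P(u_\infty)=0$, so $u_\infty$ is admissible for~\eqref{min1.2}; as the feasible set of~\eqref{min1.2} is contained in that of~\eqref{min1.1}, the two minima coincide and $u_\infty$ minimizes both problems.

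I expect the only genuinely delicate point to be the convexity/Jensen step: one must verify that $G$ is convex up to the boundary $r=0$ (here~\ref{item:h3} also ensures $G'(0^+)=0$, so $G\in C^1([0,\infty))$) and that $|u|^2\in L^1(0,T)$ with the stated average, both of which follow from $u\in H^1_{loc}(\R)\cap P_T$. The remaining arguments are elementary.
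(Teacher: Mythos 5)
Your proof is correct, and it takes a genuinely different route from the paper. The paper runs the standard machinery: it takes a minimizing sequence, proves boundedness and strong $H^1$ convergence to a limit $u_\infty$, reduces to $u_\infty\geq 0$ via $\mathcal E(|v|)\leq\mathcal E(v)$, extracts the Lagrange multiplier $a>0$ from the Euler--Lagrange equation, and then invokes the phase-portrait analysis of Section~2 to conclude that the only sign-definite bounded solutions of~\eqref{eq:ode4} are the constants. Your argument replaces all of this with a direct comparison: the kinetic term is nonnegative, and the potential term is handled by Jensen's inequality applied to $G(r)=F(\sqrt r)$, whose convexity is exactly the monotonicity of $s\mapsto f(s)/s$ from~\ref{item:h3}. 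This buys several things: existence comes for free (no compactness, no weak lower semicontinuity), uniqueness up to phase is immediate from $\|u_x\|_{L^2}=0$ forcing constancy (cleaner than the paper's appeal to the ODE classification), and you only use~\ref{item:h3} plus continuity of $f$, not the full set \ref{item:h1}--\ref{item:h3}. The one point that genuinely needs care is the one you flagged: convexity of $G$ up to $r=0$, which follows from $G'(0^+)=0$ via~\ref{item:h3}, and the applicability of Jensen to $|u|^2\in L^1(0,T)$, which holds since $u\in H^1_{loc}(\R)\cap P_T\subset L^\infty(0,T)$. The trade-off is that the paper's compactness scheme is the template reused in the focusing and Nehari sections, where no such convexity shortcut is available, whereas your argument is specific to the defocusing structure $\mathcal E=\frac12\|u_x\|_{L^2}^2+|b|\int_0^T F(u)\,dx$ with both pieces separately minimized by the same constant.
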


\begin{proof}
  Consider a minimizing sequence $(u_n) \subset H^1_{loc}(\R) \cap P_T $ for~\eqref{min1.1}. We first prove that it is bounded in $H^1_{loc}(\R) \cap P_T.$ 
  We have 
  \[
    \mathcal{E}(u_n) = \frac{1}{2} \|\partial_x u_n\|^2_{L^2} -b \int_0^T F(u_n) dx\geq 0.
  \]
  By contradiction, we suppose that $\|\partial_x u_n\|_{L^2} \rightarrow \infty$. Therefore $\mathcal{E}(u_n) \rightarrow \infty$, which is a contradiction with the minimizing nature of $(u_n)$. Moreover, the same argument show that the minimal energy is finite. Hence the sequence
  $(u_n)$ is bounded in $H^1_{loc}(\R) \cap P_T$.
  Therefore up to a subsequence, $(u_n)$ converges weakly in $H^1_{loc}(\R) \cap P_T$ and strongly in $L^2_{loc} \cap P_T$ and $L^{p+1}_{loc} \cap P_T$ towards $u_{\infty} \in H^1_{loc}(\R) \cap P_T.$ As in the proof of Proposition~\ref{prop:focusing-case-existence-min} we have that $(u_n)$ converges strongly towards $u_{\infty} $ in $H^1_{loc}(\R) \cap P_T$. As for the focusing case, for any $v\in H_{loc}^1\cap P_T$, we have
  \[ 
    M(|v|)=M(v), \quad \mathcal{E}(|v|) \leq \mathcal{E}(v),
  \]
  therefore we may assume that $u_{\infty} \geq 0$. 
  
  As in the proof of Proposition~\ref{prop:focusing-case-P_T}, we know that there exists a Lagrange multiplier $a$ such that
  \begin{equation*}
    -\mathcal{E}'(u_{\infty})+a M'(u_{\infty})=0,
  \end{equation*}
  i.e. $u_{\infty}$ satisfies the ordinary differential equation~\eqref{eq:ode4}.
  Hence $a$ might be explicitly expressed in the following way: 
  \[ 
    a= \frac{\|\partial_x u_{\infty}\|^2_{L^2}- b\int_0^T f(u_{\infty})u_{\infty} dx}{\|u_{\infty}\|^2_{L^2}}. 
  \] 
  Since $b<0$, we have $a>0$.
  In this case we know that the phase portrait for real valued solutions of~\eqref{eq:ode4} is given in Figure~\ref{fig:phase-portrait-def-J=0}.
  
  The only solutions of~\eqref{eq:ode4} that do not change sign are the constant functions $\pm\sqrt{\frac{2m}{T}}$. As a consequence, there exists $\theta \in \R$ such that
  \[
    u_{\infty} =e^{i\theta}\sqrt{\frac{2m}{T}},
  \]
  which concludes the proof. 
\end{proof}

\begin{remark}
  Under the assumptions of Proposition~\ref{prop:defocusing-case-P_T}, the minimizer is $u_{\infty}\equiv \sqrt{\frac{2m}{T}}$ (up to phase shift), and therefore the associated Lagrange multiplier is given by 
  \[
    a= -bf\left(\sqrt{\frac{2m}{T}} \right)\sqrt{\frac{T}{2m}}.
  \]
  The eigenvalues of the associated linearized operator
  \[
    -\partial_{xx} -a - b f'(u_{\infty})= - \partial_{xx} + b\sqrt{\frac{T}{2m}} f\left(\sqrt{\frac{2m}{T}} \right) -bf'\left(\sqrt{\frac{2m}{T}} \right)
  \]
  are given for $n \in \Z$ by the following formula:
  \[
    \left(\frac{2 \pi n}{T} \right)^2+b\left(\sqrt{\frac{T}{2m}} f\left(\sqrt{\frac{2m}{T}} \right) -f'\left(\sqrt{\frac{2m}{T}} \right)\right). 
  \]
  Since $b<0$ and $f(s)s^{-1}-f'(s)< 0$, if we assume in addition~\ref{item:h4}, we remark that the eigenvalues are all positive.
\end{remark}


We will also consider the variational problem restricted to anti-symmetric functions: 
\begin{equation} \label{min2.1}
  \min \{ \mathcal{E}(u):u \in H^1_{loc}(\R) \cap A_{\frac{T}{2}},\; M(u)=m\}.
\end{equation}

\subsubsection{The defocusing case in $A_T$} 

Assume $b<0$. 

In this section, we restrict ourselves to the sum of several powers.
\begin{proposition} 
  Let $f(u)=\sum_{j=1}^{N}|u|^{p_j-1}u$, where $N>1$ and $1<p_j<\infty$, for $j=1,\dots,N$. There exists a unique (up to phase shift and complex conjugate) minimizer of~\eqref{min2.1}. It is the plane wave $u_{\infty} \equiv \sqrt{\frac{2m}{T}} e^{\frac{i \pi x}{T}}$.
\end{proposition}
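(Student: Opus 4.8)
The plan is to bypass a separate compactness argument and instead establish the sharp pointwise bound $\mathcal{E}(u)\geq\mathcal{E}(u_\infty)$ for every admissible $u$, with equality characterising the minimisers; since $u_\infty=\sqrt{\tfrac{2m}{T}}e^{i\pi x/T}$ is itself admissible this yields existence, the value of the minimum, and uniqueness in one stroke (a direct-method argument as in Proposition~\ref{prop:defocusing-case-P_T} would also give existence, but is not needed). First I would expand any $u\in H^1_{loc}(\R)\cap A_T$ in Fourier series $u(x)=\sum_{k\in\Z}c_k\,e^{i(2k+1)\frac{\pi}{T}x}$, the admissible frequencies being exactly $(2k+1)\frac{\pi}{T}$; by Plancherel $\|u\|_{L^2}^2=T\sum_k|c_k|^2=2m$ and $\|u_x\|_{L^2}^2=T\sum_k\big((2k+1)\tfrac{\pi}{T}\big)^2|c_k|^2$. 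Since $F(z)=\sum_{j}\frac{1}{p_j+1}|z|^{p_j+1}$ and $b<0$, the energy splits into two nonnegative pieces,
\[
  \mathcal{E}(u)=\tfrac12\|u_x\|_{L^2}^2+|b|\sum_{j=1}^N\frac{1}{p_j+1}\|u\|_{L^{p_j+1}}^{p_j+1},
\]
which I can minimise independently.

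For the kinetic piece I would use the spectral gap of the anti-periodic Laplacian: every admissible frequency obeys $\big((2k+1)\tfrac{\pi}{T}\big)^2\geq\tfrac{\pi^2}{T^2}$, so $\|u_x\|_{L^2}^2\geq\tfrac{\pi^2}{T^2}\,T\sum_k|c_k|^2=\tfrac{\pi^2}{T^2}\|u\|_{L^2}^2=\tfrac{2m\pi^2}{T^2}$, with equality iff $c_k=0$ for $k\notin\{0,-1\}$, i.e. iff only the modes $e^{\pm i\pi x/T}$ are present. For the potential piece I would invoke the power-mean inequality on the probability space $(\,[0,T],\tfrac{dx}{T})$: since $p_j+1>2$,
\[
  \frac1T\int_0^T|u|^{p_j+1}\,dx\geq\Big(\frac1T\int_0^T|u|^2\,dx\Big)^{\frac{p_j+1}{2}}=\Big(\frac{2m}{T}\Big)^{\frac{p_j+1}{2}},
\]
with equality iff $|u|$ is a.e. constant. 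Summing against $\frac{|b|}{p_j+1}$ gives $|b|\int_0^TF(u)\,dx\geq|b|\,T\,F\big(\sqrt{\tfrac{2m}{T}}\big)$. Adding the two estimates produces exactly $\mathcal{E}(u)\geq\tfrac{m\pi^2}{T^2}+|b|\,T\,F(\sqrt{2m/T})=\mathcal{E}(u_\infty)$, and $u_\infty$ saturates both, hence is a minimiser.

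The delicate step, and the one I expect to be the real obstacle, is the equality analysis forcing uniqueness. A minimiser must saturate both bounds at once, so $u=c_0e^{i\pi x/T}+c_{-1}e^{-i\pi x/T}$ and, simultaneously, $|u|$ is a.e. constant. Expanding $|u|^2=|c_0|^2+|c_{-1}|^2+2\,\Re\!\big(c_0\overline{c_{-1}}\,e^{2i\pi x/T}\big)$, constancy forces the oscillatory term to vanish identically, i.e. $c_0\overline{c_{-1}}=0$, whence $c_{-1}=0$ or $c_0=0$. The mass constraint then fixes the modulus, giving $u=\sqrt{\tfrac{2m}{T}}e^{i\theta}e^{i\pi x/T}$ or $u=\sqrt{\tfrac{2m}{T}}e^{i\theta}e^{-i\pi x/T}$, the latter being the complex conjugate of the former; this is precisely uniqueness up to phase shift and complex conjugation. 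Two points deserve care: it is the defocusing sign $b<0$ that makes the potential term enter with a favourable sign so that both lower bounds point toward the same extremiser, and the two equality conditions are jointly compatible only for a single plane wave. Note finally that, in contrast with the Nehari problem, the Fourier rearrangement inequality of Lemma~\ref{lem:new} plays no role here, since the genuine minimiser is honestly complex-valued.
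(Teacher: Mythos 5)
Your proof follows essentially the same route as the paper's: the kinetic term is bounded below via the spectral gap of the anti-periodic Laplacian (the paper phrases this as the Poincar\'e--Wirtinger inequality for zero-mean functions, which is the same estimate), the potential term via the power-mean/H\"older inequality $\|v\|_{L^2}^{p_j+1}\leq T^{(p_j-1)/2}\|v\|_{L^{p_j+1}}^{p_j+1}$, and the plane wave saturates both, giving existence, the minimal value and uniqueness in one stroke. If anything, your joint equality analysis is slightly more careful than the paper's, which asserts a strict kinetic inequality for every competitor $v\not\equiv e^{i\theta}u_\infty$ even though a combination $c_0e^{i\pi x/T}+c_{-1}e^{-i\pi x/T}$ with both coefficients nonzero also optimizes Poincar\'e--Wirtinger; for such $v$ one must, as you do, fall back on the strict potential inequality coming from $|v|$ being nonconstant.
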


\begin{proof}
  Denote the supposed minimizer by $w(x)= \sqrt{\frac{2m}{T}} e^{\pm \frac{i \pi x}{T}}$. Let $v \in H^1_{loc}(\R) \cap A_{\frac{T}{2}}$ such that: $M(v)=m$ and $v \not\equiv e^{i \theta} w $ ($\theta \in \R$). Since $v \in H^1_{loc}(\R) \cap A_{\frac{T}{2}} $, $v$ must have 0 mean value. Recall that in this case $v$ verifies the Poincar\'e-Wirtinger inequality 
  \[
    \|v\|_{L^2} \leq \frac{T}{2 \pi} \|v'\|_{L^2},
  \]
  and that the optimizers of the Poincar\'e-Wirtinger inequality are of the form $c e^{\pm \frac{i \pi x}{T}}, c \in \C$. This implies that 
  \[
    \| \partial_x w \|^2_{L^2}= \frac{8 \pi ^2}{T^2} M(w)=\frac{8 \pi ^2}{T^2} M(v)<\|\partial_x v\|^2_{L^2}.
  \]
  We will prove now that $\int_0^T F(w) dx \leq \int_0^T F(v) dx$.
  We have 
  \begin{align*}	
    &\int_0^T \sum_{j=1}^{N} \left( \frac{1}{p_j+1} |w|^{p_j+1} \right) dx, \\
    &=\int_0^T \sum_{j=1}^{N}\left( \left(\frac{1}{p_j+1}\right) \left|\sqrt{\frac{2m}{T}}\right|^{p_j+1} \right) dx, \\
    &= T \sum_{j=1}^{N} \left(\frac{1}{p_j+1}\right) \left( T^{-\frac{p_j+1}{2}} 2^{\frac{p_j+1}{2}} m^{\frac{p_j+1}{2}} \right),\\
    &= T \sum_{j=1}^{N}\left(\frac{1}{p_j+1}\right) \left( T^{-\frac{p_j+1}{2}} \|v\|^{p_j+1}_{L^2} \right),\\
    & \leq T \sum_{j=1}^{N} \left(\frac{1}{p_j+1}\right) \left( T^{-\frac{p_j+1}{2}} T^{\frac{p_j-1}{2}} \|v\|^{p_j+1}_{L^{p_j+1}}\right),
  \end{align*}
  where the last inequality came from H\"older inequality:
  \[
    \|v\|^{p +1} _{L^2} \leq T^{\frac{p-1}{2}} \|v\|^{p +1}_{L^ {p+1}}, \quad \frac{1}{\frac{p +1}{2}}+\frac{1}{\frac{p+1}{p-1}}=1.
  \]
  Therefore we have 
  \[
    \sum_{j=1}^{N} \frac{1}{p_j+1}\|w\|^{p_j+1}_{L^{p_j+1}} \leq \sum_{j=1}^{N} \frac{1}{p_j+1} \|v\|^{p_j+1}_{L^{p_j+1}} 
  \]
  which implies that 
  \[
    \mathcal{E}(w)< \mathcal{E}(v),
  \]
  which concludes the proof.
\end{proof}


\subsection{Minimization on the Nehari manifold}
In this section we restrict ourselves to the nonlinearity of the form $f(u)= |u|^{p-1}u$, with $p>1$. We define the functional $S: H^1_{loc}(\R) \cap P_T\to \R$ by setting for $u \in H^1_{loc}(\R)$
\[ 
  S(u):= \frac{1}{2} \| \partial_x u\|^2_{L^2} - \frac{a}{2} \|u\|^2_{L^2}-\frac{b}{p+1} \|u\|^{p+1}_{L^{p+1}}.
\]
It is standard that $S$ is of class $C^2$. The Fr\'echet derivative of $S$ at $u$ is given by
\[
  S'(u)= -u_{xx} -a u -b |u|^{p-1}u.
\]
Therefore, $u$ is a solution of the ordinary differential equation~\eqref{eq:ode4} if and only if $S'(u)=0$. Let $I(u)= \| \partial_x u\|^2_{L^2} - a \|u\|^2_{L^2}-b \|u\|^{p+1}_{L^{p+1}}$. The set 
\[
  \{ u \in H^1_{loc}(\R): u \neq 0, I(u)=0 \}
\]
is called Nehari manifold.
We are interested in the minimization problems on the Nehari manifold:
\begin{equation} \label{min:Nehari-Manifold-periodic}
  \min \{ S(u): u \in H^1_{loc}(\R) \cap P_T,u \neq 0, I(u)=0\},
\end{equation}
and 
\begin{equation} \label{min:Nehari-Manifold-anti-periodic}
  \min \{ S(u): u \in H^1_{loc}(\R) \cap A_{\frac{T}{2}},u \neq 0, I(u)=0\}.
\end{equation}

The minimization problem on the Nehari manifold has been studied in numerous works. In this regard, we mention the work of Szulkin and Weth~\cite{SzWe10}, the work of Pankov~\cite{Pa07} and Pankov and Zhang~\cite{PaZh08} for the discrete nonlinear Schr\"odinger equation. 
We also mention the work of Hayashi~\cite{Ha21} on the nonlinear Schr\"odinger equation of derivative type and the work of Colin and Watanabe~\cite{CoWa18} on the nonlinear Klein-Gordon-Maxwell type system.

\begin{remark}
  The interest for minimization over the Nehari manifold is that it is a natural constraint. Indeed, assume that $u\neq 0$ is a minimizer.	We have 
  \begin{align*}
    I'(u)&= -2u_{xx} - 2a u - b(p+1) |u|^{p-1}u,\\
         &= -2u_{xx} - 2a u - 2b |u|^{p-1}u+ b(2-(p+1))|u|^{p-1}u,\\
         &= 2 S'(u) -b(p-1)|u|^{p-1}u.
  \end{align*}
  Moreover $I(u)=<S'(u),u>$, therefore 
  \begin{align*}
    <I'(u),u> &= 2<S'(u),u>- b(p-1) \|u\|^{p+1}_{L^{p+1}},\\
              &= -b(p-1) \|u\|^{p+1}_{L^{p+1}} \neq 0.
  \end{align*}
  On the other hand, if $S'(u)= \lambda I'(u)$, this implies that
  \[
    0=<S'(u),u>=\lambda <I'(u),u>.
  \] 
  Since $<I'(u),u> \neq 0 $, this implies
  \[
    \lambda =0.
  \]
  Therefore the minimizer $u$ verifies $S'(u)=0$, so it is a solution of the ordinary differential equation~\eqref{eq:ode4}.
\end{remark}

\subsubsection{The focusing case in $P_T$} 

Let $b>0$ and $a<0$. We have the following lemma.
\begin{lemma}
  The minimum of~\eqref{min:Nehari-Manifold-periodic} is finite and there exists a real minimizer solution of~\eqref{eq:ode4}.
\end{lemma}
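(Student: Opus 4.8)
The plan is to solve the minimization problem~\eqref{min:Nehari-Manifold-periodic} in the focusing case $b>0$, $a<0$ by a direct method, exploiting the compactness of Sobolev embeddings on the bounded interval $[0,T]$, in the spirit of the existence arguments already carried out in Proposition~\ref{prop:focusing-case-existence-min}. First I would record the two basic structural facts about the Nehari constraint. On the manifold $I(u)=0$ we have the identity $\|\partial_x u\|_{L^2}^2 - a\|u\|_{L^2}^2 = b\|u\|_{L^{p+1}}^{p+1}$, and substituting this into $S$ gives the purely positive expression
\[
  S(u) = \left(\frac{1}{2}-\frac{1}{p+1}\right) b\,\|u\|_{L^{p+1}}^{p+1} = \frac{p-1}{2(p+1)}\, b\,\|u\|_{L^{p+1}}^{p+1}.
\]
Since $b>0$ and $p>1$, this shows immediately that $S\geq 0$ on the manifold, so the infimum $\ell:=\inf$ is finite and nonnegative; this also means minimizing $S$ is equivalent to minimizing $\|u\|_{L^{p+1}}^{p+1}$ over the constraint. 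As in the earlier propositions, the inequality $\|\partial_x |u|\|_{L^2}\leq \|\partial_x u\|_{L^2}$ together with $\||u|\|_{L^q}=\|u\|_{L^q}$ shows the constraint and the functional behave well under taking modulus, so we may restrict to real-valued nonnegative $u$.

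Next I would show the infimum is strictly positive, which is what keeps a minimizing sequence away from $0$. Because $a<0$ the quadratic part $\|\partial_x u\|_{L^2}^2 - a\|u\|_{L^2}^2$ is equivalent to $\|u\|_{H^1}^2$; the Nehari identity combined with the Sobolev embedding $\|u\|_{L^{p+1}}\lesssim \|u\|_{H^1}$ then gives $\|u\|_{H^1}^2 \lesssim \|u\|_{L^{p+1}}^{p+1}\lesssim \|u\|_{H^1}^{p+1}$, whence $\|u\|_{H^1}\geq c>0$ for every $u$ on the manifold. Feeding this back through $S(u)=\frac{p-1}{2(p+1)}b\|u\|_{L^{p+1}}^{p+1}$ yields $\ell>0$. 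Then, taking a minimizing sequence $(u_n)$ of nonnegative functions, I would use the Nehari identity once more, namely $\|u_n\|_{H^1}^2 \sim b\|u_n\|_{L^{p+1}}^{p+1} = \frac{2(p+1)}{(p-1)}S(u_n)\to \frac{2(p+1)}{(p-1)}\ell$, to conclude $(u_n)$ is bounded in $H^1_{loc}(\R)\cap P_T$.

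With boundedness in hand, I would extract a subsequence converging weakly in $H^1$ and, by the compact embedding on the bounded domain, strongly in $L^2$ and $L^{p+1}$ to a limit $u_\infty\geq 0$. Strong $L^{p+1}$ convergence gives $\|u_\infty\|_{L^{p+1}}^{p+1}=\lim\|u_n\|_{L^{p+1}}^{p+1}$, and since $\ell>0$ this forces $u_\infty\neq 0$. The delicate point is to show $u_\infty$ actually lies on the manifold rather than merely satisfying $I(u_\infty)\leq 0$ by weak lower semicontinuity of the $H^1$-part. I expect this to be the main obstacle. The standard remedy is a scaling argument: for $u\neq 0$ with $I(u)<0$ there is a unique $t\in(0,1)$ with $tu$ on the manifold (one solves $t^2(\|\partial_x u\|_{L^2}^2 - a\|u\|_{L^2}^2)=t^{p+1}b\|u\|_{L^{p+1}}^{p+1}$ explicitly, using $p+1>2$), and then $S(tu)=\frac{p-1}{2(p+1)}b\,t^{p+1}\|u\|_{L^{p+1}}^{p+1}<\frac{p-1}{2(p+1)}b\|u\|_{L^{p+1}}^{p+1}\leq \ell$, contradicting the definition of $\ell$ unless $t=1$, i.e. $I(u_\infty)=0$. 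Once $u_\infty$ is on the manifold, lower semicontinuity of the quadratic part together with the strong convergence of the $L^{p+1}$ term gives $S(u_\infty)\leq \ell$, so $u_\infty$ is a minimizer; the natural-constraint remark already proved in the excerpt then guarantees $S'(u_\infty)=0$, so $u_\infty$ is a real, nonnegative solution of~\eqref{eq:ode4}.
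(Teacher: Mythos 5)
Your proposal is correct and follows essentially the same route as the paper's proof: boundedness of a minimizing sequence via the identity $S(u)=S(u)-\tfrac{1}{p+1}I(u)=\bigl(\tfrac12-\tfrac{1}{p+1}\bigr)\bigl(\|\partial_x u\|_{L^2}^2-a\|u\|_{L^2}^2\bigr)$ (equivalently $\tfrac{p-1}{2(p+1)}b\|u\|_{L^{p+1}}^{p+1}$ on the manifold), compactness of the Sobolev embedding on $[0,T]$, weak lower semicontinuity yielding $I(u_\infty)\leq 0$, rescaling by $t\leq 1$ to return to the Nehari manifold, and passage to $|u|$ (with a further rescaling) to obtain a real nonnegative minimizer solving the profile equation. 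Your extra step establishing $\inf S>0$ on the manifold via the Sobolev embedding, which rules out $u_\infty=0$, is a point the paper's argument leaves implicit and is worth retaining.
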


\begin{proof}
  Consider a minimizing sequence $(u_n) \subset H^1_{loc}(\R) \cap P_T$ for~\eqref{min:Nehari-Manifold-periodic}. We have $I(u_n)=0$, therefore 
  \begin{equation*} 
    S(u_n)=S(u_n)-\frac{1}{p+1} I(u_n)= \left( \frac{1}{2}- \frac{1}{p+1} \right) \left( \|\partial _{x} u_n\|^2_{L^2}-a \|u_n\|^2_{L^2}\right).
  \end{equation*}
  We have the boundedness of the sequence $(u_n)$ in $H^1_{loc}(\R) \cap P_T$. Indeed, by contradiction we suppose that $\|u_n\|^2_{L^{2}} \to \infty$, or $\|\partial _{x} u_n\|^2_{L^2} \to \infty $, therefore $S(u_n) \rightarrow \infty$, which is a contradiction with the minimizing nature of $(u_n)$.
  Therefore up to a subsequence, $(u_n)$ converges weakly in $ H^1_{loc}(\R) \cap P_T$ and strongly in $ L^2_{loc} \cap P_T$ and $ L^{p+1}_{loc} \cap P_T$ towards $u_{\infty} \in H^1_{loc}(\R) \cap P_T$.
  By the weak convergence we have
  \[
    \| \partial_x u_{\infty} \|_{L^2} \leq \lim \limits_{n \to \infty} \inf \| \partial_x u_n\|_{L^2},
  \]
  then 
  \[
    \| \partial_x u_{\infty} \|^2_{L^2} -a \|u_{\infty}\| ^2_{L^2} \leq \lim \limits_{n \to \infty} \inf \left( \| \partial_x u_{n} \|^2_{L^2} -a \|u_n\| ^2_{L^2} \right).
  \]
  Therefore
  \[
    S(u_{\infty})- \frac{1}{p+1} I(u_{\infty}) \leq \lim \limits_{n \to \infty} \inf S(u_n).
  \]
  On the other hand we have 
  \begin{align*}
    I(u_{\infty}) &= \| \partial_x u_{\infty}\|^2_{L^2} -a \|u_{\infty}\|^2_{L^2}-b \|u_{\infty}\|^{p+1}_{L^{p+1}}\\
                  & \leq \lim \limits_{n \to \infty} \inf \left( \| \partial_x u_{n} \|^2_{L^2} -a \|u_n\| ^2_{L^2} \right) - b \lim \limits_{n \to \infty} \|u_n\|^{p+1}_{L^{p+1}}\\
                  & \leq \lim \limits_{n \to \infty} I(u_n)=0.
  \end{align*}
  Then 
  \[ I(u_{\infty}) \leq 0,
  \]
  and this implies that 
  \[
    S(u_{\infty}) \leq S(u_{\infty})- \frac{1}{p+1} I(u_{\infty}) \leq \lim \limits_{n \to \infty} \inf S(u_n).
  \]
  The graph of $I(tu)$ is given in the Figure~\ref{fig:I(ut)-foc}. We know that $I( u_{\infty}) \leq 0$, hence there exists $t_0 \leq 1$ such that $I(t_0u_{\infty})=0$.
  We have
  \begin{align*}
    S(t_0u_{\infty})&=S(t_0u_{\infty})-\frac{1}{p+1}I(t_0u_{\infty}),\\
                    &= \left( \frac{1}{2} -\frac{1}{p+1} \right) (\| \partial_x u_{\infty}\|^2_{L^2} -a \|u_{\infty}\|^2_{L^2}),\\
                    & \leq \left( \frac{1}{2} -\frac{1}{p+1} \right) (\| \partial_x u_{\infty}\|^2_{L^2} -a \|u_{\infty}\|^2_{L^2}),\\
                    &= S(u_{\infty})- \frac{1}{p+1} I(u_{\infty}) \leq \lim \limits_{n \to \infty} \inf S(u_n).
  \end{align*}
  Therefore 
  \[
    S(t_0 u_{\infty}) \leq \lim \limits_{n \to \infty} \inf S(u_n), \quad I(t_0 u_{\infty})=0,
  \]
  which implies the existence of the minimizer.
  
  Moreover, without loss of generality, we can restrict the minimization to real-valued non-negative functions. Indeed if $(t_0 u_{\infty}) \in H^1_{loc}(\R) \cap P_T$, then $|t_0 u_{\infty}| \in H^1_{loc}(\R) \cap P_T$ and we have $ \| \partial _x |t_0 u_{\infty}| \| _{L^2} \leq \| \partial _x (t_0 u_{\infty}) \| _{L^2}$. This implies that $S(|t_0 u_{\infty}|) \leq S(t_0 u_{\infty})$ and $I(|t_0 u_{\infty}|) \leq I(t_0 u_{\infty})=0$. As before, we know from the graph of $I(tu)$ given in the Figure~\ref{fig:I(ut)-foc} that there exists $t_2\leq 1$ such that $I(|t_2 u_{\infty}|)=0$ with $S(|t_2 u_{\infty}|) \leq \lim \limits_{n \to \infty} \inf S(u_n)$ which implies that the minimizer is real. 
  
  \begin{figure}[htbp!]
    \centering
    \begin{tabular}{cc}
      \includegraphics[width=.31\textwidth]{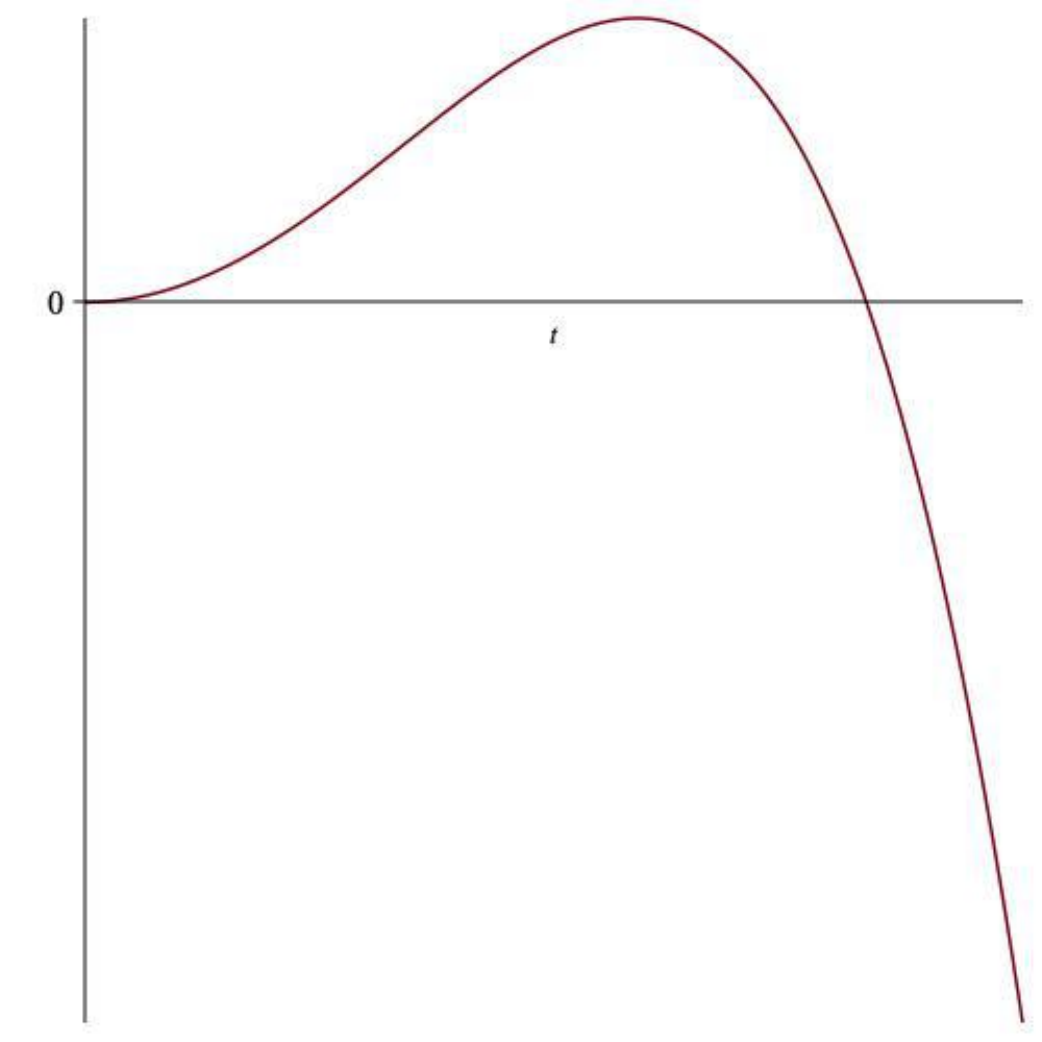}
    \end{tabular}
    \caption{$I(tu)$ as a function of $t$ in the focusing case.}
    \label{fig:I(ut)-foc}
  \end{figure}
  
\end{proof}

\subsubsection{The defocusing case in $P_T$}

Let $b<0$ and $a>0$. We have the following lemma.
\begin{lemma}
  The minimum of~\eqref{min:Nehari-Manifold-periodic} is finite and there exists a unique (up to phase shift) minimizer solution of~\eqref{eq:ode4} which is the constant function $u_{\min} \equiv \left({-\frac{a}{b}}\right)^{\frac{1}{p-1}}$.
\end{lemma}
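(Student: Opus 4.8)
The plan is to exploit the explicit form of the candidate minimizer and bypass the compactness machinery of the previous lemmas entirely: because the defocusing sign makes $S$ \emph{negative} on the whole Nehari manifold, the genuine issue here is not existence but boundedness from below, which I will obtain directly from a sharp inequality.

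First I would reduce $S$ on the constraint. Using $I(u)=0$ one has $\|\partial_x u\|^2_{L^2}-a\|u\|^2_{L^2}=b\|u\|^{p+1}_{L^{p+1}}$, and substituting this into $S$ gives, on the Nehari manifold,
\[
  S(u)=b\left(\frac12-\frac{1}{p+1}\right)\|u\|^{p+1}_{L^{p+1}}=-\frac{(p-1)(-b)}{2(p+1)}\|u\|^{p+1}_{L^{p+1}}.
\]
Since $b<0$ this quantity is negative, so minimizing $S$ over the manifold is the same as \emph{maximizing} $\|u\|^{p+1}_{L^{p+1}}$ among admissible functions. I would also record at this stage that the constant $u_{\min}\equiv\left(-\frac{a}{b}\right)^{\frac{1}{p-1}}$ (well defined since $-\frac{a}{b}>0$) satisfies $I(u_{\min})=0$, so the manifold is nonempty and $u_{\min}$ is a legitimate competitor; it clearly solves~\eqref{eq:ode4}, as $\partial_{xx}u_{\min}=0$ and $a+b|u_{\min}|^{p-1}=0$.

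The heart of the argument is a uniform bound on $\|u\|_{L^{p+1}}$ along the manifold. From $I(u)=0$ and $\|\partial_x u\|^2_{L^2}\geq0$ I get $(-b)\|u\|^{p+1}_{L^{p+1}}=a\|u\|^2_{L^2}-\|\partial_x u\|^2_{L^2}\leq a\|u\|^2_{L^2}$. Then, since $p+1>2$ and $[0,T]$ has finite measure, H\"older's inequality yields $\|u\|^2_{L^2}\leq T^{\frac{p-1}{p+1}}\|u\|^2_{L^{p+1}}$. Combining these and dividing by $\|u\|^2_{L^{p+1}}>0$ gives $\|u\|^{p-1}_{L^{p+1}}\leq\frac{a}{-b}T^{\frac{p-1}{p+1}}$, hence
\[
  \|u\|^{p+1}_{L^{p+1}}\leq\left(\frac{a}{-b}\right)^{\frac{p+1}{p-1}}T.
\]
Inserting this into the formula for $S$ produces a finite lower bound which, after simplification, is exactly $S(u_{\min})=-\frac{(p-1)(-b)}{2(p+1)}\left(\frac{a}{-b}\right)^{\frac{p+1}{p-1}}T$. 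This simultaneously shows that the minimum is finite and that it is attained at $u_{\min}$.

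Finally I would settle uniqueness by tracing the equality cases. Equality in $S(u)=S(u_{\min})$ forces equality both when discarding $\|\partial_x u\|^2_{L^2}$ and in the H\"older step. The first forces $\partial_x u=0$, i.e.\ $u$ is a (complex) constant; membership in the manifold then pins its modulus to $\left(-\frac{a}{b}\right)^{\frac{1}{p-1}}$, so $u=e^{i\theta}\left(-\frac{a}{b}\right)^{\frac{1}{p-1}}$, giving uniqueness up to phase shift and a solution of~\eqref{eq:ode4}. I expect the only delicate point to be precisely this boundedness-from-below step: a priori it is unclear that $S$, which is negative throughout the manifold, does not tend to $-\infty$, and the resolution is the combination of the Nehari constraint with H\"older's inequality above. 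The remaining work is purely computational bookkeeping of the equality cases and the verification that the lower bound simplifies to $S(u_{\min})$.
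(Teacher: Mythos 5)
Your proof is correct, and it takes a genuinely different --- and in several respects sharper --- route than the paper's. The paper runs the direct method: it uses the same H\"older inequality $\|u\|^2_{L^2}\leq T^{\frac{p-1}{p+1}}\|u\|^2_{L^{p+1}}$ only to bound a minimizing sequence, then extracts a weak limit $u_\infty$, shows $I(u_\infty)\leq 0$, rescales by a factor $t_0$ to land back on the Nehari manifold, and finally identifies the minimizer by appealing to the phase-portrait analysis of Section~2 (the constant being the only real non-negative bounded solution of the profile equation in this regime). You instead observe that on the manifold $S(u)=b\bigl(\tfrac12-\tfrac{1}{p+1}\bigr)\|u\|^{p+1}_{L^{p+1}}$ is a negative multiple of $\|u\|^{p+1}_{L^{p+1}}$, and convert the very same H\"older estimate into the sharp bound $\|u\|^{p+1}_{L^{p+1}}\leq\left(\frac{a}{-b}\right)^{\frac{p+1}{p-1}}T$ valid for every $u$ on the manifold, whose right-hand side equals $\|u_{\min}\|^{p+1}_{L^{p+1}}$. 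This gives finiteness, attainment, and the exact minimal value in one stroke, and the equality-case analysis yields uniqueness up to phase directly, with no compactness argument and no appeal to the ODE classification. Two points are worth spelling out when writing it up: that $\|u\|_{L^{p+1}}>0$ for admissible $u\neq 0$, so the division is licit; and that saturation of your chain forces both $\|\partial_x u\|_{L^2}=0$ and equality in H\"older (the former already implies the latter, since a constant has constant modulus), after which $I(u)=0$ pins $|u|$ to $(-a/b)^{1/(p-1)}$. Your approach buys an explicit value of the minimum and a self-contained uniqueness proof; the paper's compactness template buys nothing extra in this defocusing case, but it is the scheme that transfers to the other settings (focusing, anti-periodic) where no explicit minimizer is available.
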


\begin{proof}
  Consider a minimizing sequence $(u_n) \subset H^1_{loc}(\R) \cap P_T$ for~\eqref{min:Nehari-Manifold-periodic}.
  We know from the H\"older inequality that 
  \[
    \|v\|^{p +1} _{L^2} \leq T^{\frac{p-1}{2}} \|v\|^{p +1}_{L^ {p+1}}, \quad \frac{1}{\frac{p +1}{2}}+\frac{1}{\frac{p+1}{p-1}}=1.
  \]
  Thus, since $I(u_n)=0$, we have
  \[
    0 \leq \|\partial _{x} u_n\|^2_{L^2}=a \|u_n\|^2_{L^2}+b \|u_n\|^{p+1}_{L^{p+1}} \leq aT^{\frac{p-1}{p+1}}\|u_n\|^2_{L^{p+1}}+b \|u_n\|^{p+1}_{L^{p+1}} 
  \]
  As a result the last term in the inequality is positive with $b<0$ then $(u_n)$ is bounded in $L^{p+1}_{loc} \cap P_T$. Moreover, with the H\"older inequality we have then the boundedness of $(u_n)$ in $L^{2}_{loc} \cap P_T$. Finally as $I(u_n)=0$ we have the boundedness of the sequence $(u_n)$ in $H^1_{loc}(\R) \cap P_T$.
  Therefore up to a subsequence, $(u_n)$ converges weakly in $ H^1_{loc}(\R) \cap P_T$ and strongly in $ L^2_{loc} \cap P_T$ and $ L^{p+1}_{loc} \cap P_T$ towards $u_{\infty} \in H^1_{loc}(\R) \cap P_T$.
  By the weak convergence we have
  \[
    \|\partial _{x} u_{\infty} \|_{L^2} \leq \lim \limits_{n \to \infty} \inf \|\partial _{x} u_n\|_{L^2},
  \]
  then 
  \[
    \| \partial_x u_{\infty} \|^2_{L^2} -a \|u_{\infty}\| ^2_{L^2} \leq \lim \limits_{n \to \infty} \inf \left( \| \partial_x u_{n} \|^2_{L^2} -a \|u_n\| ^2_{L^2} \right).
  \]
  Therefore
  \[
    S(u_{\infty})- \frac{1}{p+1} I(u_{\infty}) \leq \lim \limits_{n \to \infty} \inf S(u_n).
  \]
  On the other hand we have 
  \begin{align*}
    I(u_{\infty}) &= \| \partial_x u_{\infty}\|^2_{L^2} -a \|u_{\infty}\|^2_{L^2}-b \|u_{\infty}\|^{p+1}_{L^{p+1}}\\
                  & \leq \lim \limits_{n \to \infty} \inf \left( \| \partial_x u_{n} \|^2_{L^2}\right) -a \lim \limits_{n \to \infty} \|u_n\| ^2_{L^2} - b \lim \limits_{n \to \infty} \|u_n\|^{p+1}_{L^{p+1}}\\
                  & \leq \lim \limits_{n \to \infty} I(u_n)=0.
  \end{align*}
  Then 
  \[ I(u_{\infty}) \leq 0,
  \]
  and this implies that 
  \[
    S(u_{\infty}) \leq S(u_{\infty})- \frac{1}{p+1} I(u_{\infty}) \leq \lim \limits_{n \to \infty} \inf S(u_n).
  \]
  The graph of $I(tu)$ is given in the Figure~\ref{fig:I(ut)-def}. Since $I( u_{\infty}) \leq 0$, there exists $t_0>1$ such that $I(t_0u_{\infty})=0$. Observe also that
  \[
    \| \partial_x u_{\infty}\|^2_{L^2} -a \|u_{\infty}\|^2_{L^2}\leq b \|u_{\infty}\|^{p+1}_{L^{p+1}}<0.
  \]
  We have
  \begin{align*}
    S(t_0u_{\infty})&=S(t_0u_{\infty})-\frac{1}{p+1}I(t_0u_{\infty}),\\
                    &= \left( \frac{1}{2} -\frac{1}{p+1} \right)t_0^2\left( \| \partial_x u_{\infty}\|^2_{L^2} -a \|u_{\infty}\|^2_{L^2}\right),\\
                    & \leq \left( \frac{1}{2} -\frac{1}{p+1} \right) \left( \| \partial_x u_{\infty}\|^2_{L^2} -a \|u_{\infty}\|^2_{L^2}\right),\\
                    &= S(u_{\infty})- \frac{1}{p+1} I(u_{\infty}) \leq \lim \limits_{n \to \infty} \inf S(u_n).
  \end{align*}
  Therefore 
  \[
    S(t_0 u_{\infty}) \leq \lim \limits_{n \to \infty} \inf S(u_n), \quad I(t_0 u_{\infty})=0,
  \]
  which implies the existence of the minimizer.
  As in the focusing case and without loss of generality, we can prove that the minimizer is real-valued, non negative and solution of the ordinary differential equation~\eqref{eq:ode4}. The only such solution of~\eqref{eq:ode4} is the constant functions $u_{\min} \equiv \left({-\frac{a}{b}}\right)^{\frac{1}{p-1}}$ with $I(u_{\min})=0$, which concludes the proof. 

  \begin{figure}[htbp!]
    \centering
    \begin{tabular}{cc}
      \includegraphics[width=.31\textwidth]{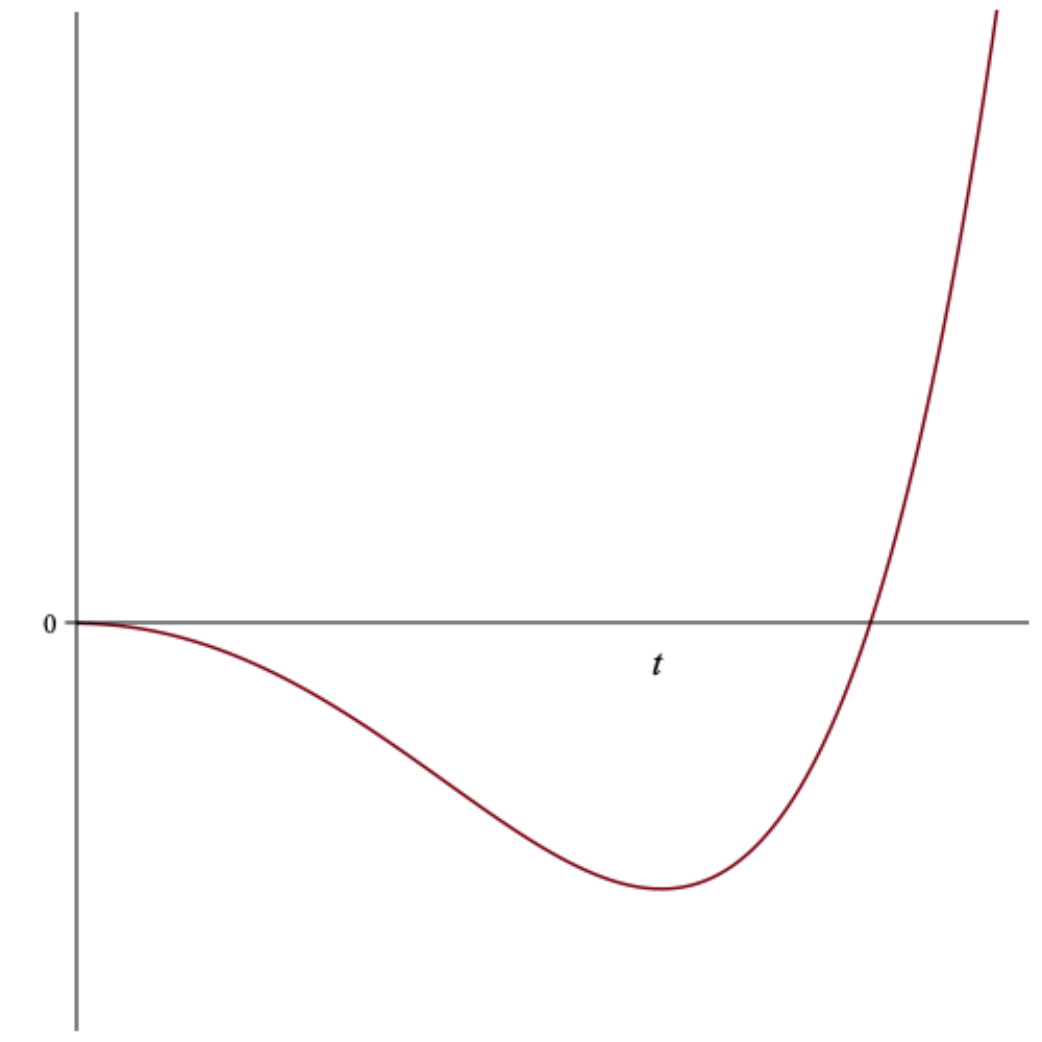}
    \end{tabular}
    \caption{$I(tu)$ as a function of $t$ in the defocusing case.}
    \label{fig:I(ut)-def}
  \end{figure}
\end{proof}


\subsubsection{The focusing case in $A_T$}
\label{sec:nehari-anti}

Assume $b>0$ and $a<\frac{4\pi^2}{T^2}$.
\begin{lemma}
  The minimum of~\eqref{min:Nehari-Manifold-anti-periodic} is finite.
\end{lemma}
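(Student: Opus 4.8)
The plan is to prove finiteness in two halves: first, that the Nehari constraint set is nonempty, so the infimum is $<+\infty$; second, that $S$ is bounded below on it, so the infimum is $>-\infty$. Both halves rest on the coercivity of the quadratic part of $S$, and this is exactly where the anti-periodicity and the threshold $a<\frac{4\pi^2}{T^2}$ enter.

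First I would record the spectral gap. Any $u\in H^1_{loc}(\R)\cap A_{\frac{T}{2}}$ has a Fourier expansion carried by the odd frequencies $\frac{2\pi}{T}j$, $j$ odd, so the smallest frequency present is $\frac{2\pi}{T}$ and the Poincar\'e--Wirtinger inequality $\|\partial_x u\|_{L^2}^2\geq \frac{4\pi^2}{T^2}\|u\|_{L^2}^2$ holds. Writing $Q(u):=\|\partial_x u\|_{L^2}^2-a\|u\|_{L^2}^2$ and using $a<\frac{4\pi^2}{T^2}$, this gives
\[
  Q(u)\geq \Big(1-\frac{a_+T^2}{4\pi^2}\Big)\|\partial_x u\|_{L^2}^2\geq c\,\|u\|_{H^1}^2
\]
for some $c>0$, where $a_+=\max(a,0)$; in particular $Q(u)>0$ for every $u\neq 0$ in $A_{\frac{T}{2}}$.

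For nonemptiness, I would fix any $v\in A_{\frac{T}{2}}$, $v\neq0$, and inspect the scaling $t\mapsto I(tv)=t^2Q(v)-b\,t^{p+1}\|v\|_{L^{p+1}}^{p+1}$. Since $Q(v)>0$, $b>0$ and $p+1>2$, it vanishes at the unique positive value $t_0=\big(Q(v)/(b\|v\|_{L^{p+1}}^{p+1})\big)^{1/(p-1)}$, so $t_0v$ belongs to the Nehari manifold and the infimum is $<+\infty$. For the lower bound, on the constraint $I(u)=0$ one has $Q(u)=b\|u\|_{L^{p+1}}^{p+1}$, whence
\[
  S(u)=\frac12Q(u)-\frac{b}{p+1}\|u\|_{L^{p+1}}^{p+1}=\frac{p-1}{2(p+1)}Q(u)\geq0,
\]
because $p>1$ and $Q\geq0$. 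Thus the infimum lies in $[0,+\infty)$ and is finite.

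The argument is short, and its one substantive point is the coercivity of $Q$. I would stress that it is the anti-periodic spectral gap $\frac{4\pi^2}{T^2}$ --- which has no analogue in the merely periodic setting, where constants annihilate the gradient --- that lets $Q$ control the full $H^1$-norm for every $a$ below the threshold, including positive values of $a$. The nonemptiness by scaling and the Nehari reduction $S=\frac{p-1}{2(p+1)}Q$ are then routine, and the same coercivity estimate is what will later drive the existence of a minimizer.
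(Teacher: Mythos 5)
Your proof is correct, and it reaches the conclusion by a more direct route than the paper. The key quantitative ingredient is the same in both arguments: the anti-periodic spectral gap $\|\partial_x u\|_{L^2}^2\geq\frac{4\pi^2}{T^2}\|u\|_{L^2}^2$ (the Poincar\'e--Wirtinger inequality for zero-mean, odd-frequency functions), which together with $a<\frac{4\pi^2}{T^2}$ makes $Q(u)=\|\partial_x u\|_{L^2}^2-a\|u\|_{L^2}^2$ coercive, and the Nehari identity $S(u)=S(u)-\frac{1}{p+1}I(u)=\bigl(\frac12-\frac{1}{p+1}\bigr)Q(u)$. The difference is organizational: the paper runs the whole argument through a minimizing sequence, proving boundedness in $H^1$, extracting a weak limit, and rescaling it back onto the Nehari manifold via the shape of $t\mapsto I(tu)$, so that its proof of this lemma simultaneously delivers the existence of a minimizer. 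You instead prove finiteness of the infimum in two clean strokes --- the constraint set is nonempty (by the scaling $t_0=\bigl(Q(v)/(b\|v\|_{L^{p+1}}^{p+1})\bigr)^{1/(p-1)}$, which exists precisely because $Q(v)>0$), and $S=\frac{p-1}{2(p+1)}Q\geq 0$ on the constraint --- which is all the stated lemma asks for, and you make explicit the nonemptiness point that the paper leaves implicit when it takes a minimizing sequence. The trade-off is that your argument does not produce a minimizer; if the lemma is read, as the paper's own proof and Theorem~\ref{thm}(6) suggest, as also asserting attainment, you would still need the compactness step (bounded minimizing sequence, weak limit $u_\infty$ with $I(u_\infty)\leq 0$, rescale to $t_0u_\infty$ on the manifold), for which your coercivity estimate on $Q$ is exactly the required input.
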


\begin{proof}
  Consider a minimizing sequence $(u_n) \subset H^1_{loc}(\R) \cap A_{\frac{T}{2}}$ for~\eqref{min:Nehari-Manifold-anti-periodic}. We have $I(u_n)=0$, therefore 
  \begin{equation} \label{eq:S(u_n)}
    S(u_n)=S(u_n)-\frac{1}{p+1} I(u_n)= \left( \frac{1}{2} - \frac{1}{p+1} \right) \left( \|\partial _{x} u_n\|^2_{L^2}-a \|u_n\|^2_{L^2}\right).
  \end{equation}
  We will distinguish between two cases whether $a<0$ or $a>0$. 
  In the first case, as in the periodic case, we can directly conclude by contradiction with the minimizing nature of $(u_n)$ that it is bounded in $H^1_{loc}(\R) \cap A_{\frac{T}{2}}$.
  In the second case, we suppose that $a>0$. Since $u_n \in A_{\frac{T}{2}}$, $u_n$ must have 0 mean value. In that case $u_n$ verifies the Poincar\'e-Wirtinger inequality:
  \[
    \|u_n\|_{L^2} \leq \frac{T}{2 \pi } \| \partial _x u_n \|_{L^2}.
  \]
  Replacing in~\eqref{eq:S(u_n)}, we obtain that
  \[
    S(u_n) \geq \left( \frac{1}{2} - \frac{1}{p+1} \right) \left( \frac{4 \pi ^2}{T^2}-a \right) \|u_n\|^2_{L^2}.
  \]
  Then by the same arguments as in the first case we can prove that $(u_n)$ is bounded in $H^1_{loc}(\R) \cap A_{\frac{T}{2}}$ if 
  \[
    a< \frac{4 \pi^2}{T^2}.
  \]
  Therefore up to a subsequence, $(u_n)$ converges weakly in $ H^1_{loc}(\R) \cap A_{\frac{T}{2}}$ and strongly in $ L^2_{loc} \cap A_{\frac{T}{2}}$ and $ L^{p+1}_{loc} \cap A_{\frac{T}{2}}$ towards $u_{\infty} \in H^1_{loc}(\R) \cap A_{\frac{T}{2}}$.
  By the weak convergence we have
  \[
    \|u_{\infty} \|_{H^1} \leq \lim \limits_{n \to \infty} \inf \|u_n\|_{H^1}.
  \]
  If $a<0$, by the equivalence of the norms we have
  \[
    \| \partial_x u_{\infty} \|^2_{L^2} -a \|u_{\infty}\| ^2_{L^2} \leq \lim \limits_{n \to \infty} \inf \left( \| \partial_x u_{n} \|^2_{L^2} -a \|u_n\| ^2_{L^2} \right).
  \]
  And if $a>0$, by the strong convergence in $ L^2_{loc} \cap A_{\frac{T}{2}}$ we also have the above inequality.
  Therefore
  \[
    S(u_{\infty})- \frac{1}{p+1} I(u_{\infty}) \leq \lim \limits_{n \to \infty} \inf S(u_n).
  \]
  On the other hand we have 
  \begin{align*}
    I(u_{\infty}) &= \| \partial_x u_{\infty}\|^2_{L^2} -a \|u_{\infty}\|^2_{L^2}- \|u_{\infty}\|^{p+1}_{L^{p+1}}\\
                  & \leq \lim \limits_{n \to \infty} \inf \left( \| \partial_x u_{n} \|^2_{L^2} -a \|u_n\| ^2_{L^2} \right) - \lim \limits_{n \to \infty} \|u_n\|^{p+1}_{L^{p+1}}\\
                  & \leq \lim \limits_{n \to \infty} I(u_n)=0.
  \end{align*}
  Then 
  \[ I(u_{\infty}) \leq 0,
  \]
  and this implies that 
  \[
    S(u_{\infty}) \leq S(u_{\infty})- \frac{1}{p+1} I(u_{\infty}) \leq \lim \limits_{n \to \infty} \inf S(u_n).
  \]
  As in the periodic case with the Figure~\ref{fig:I(ut)-foc} we can prove that there exists $t_0<1$ such that $I(t_0u_{\infty})=0$ and $S(t_0 u_{\infty}) \leq \lim \limits_{n \to \infty} \inf S(u_n)$ which implies the existence of the minimizer.
\end{proof}

We now consider the following intermediate minimization problem:
\begin{equation} \label{min:prob-norm}
  \min \left\{ \left( \frac{1}{2}- \frac{1}{p+1} \right) \|v\|^2_{H^1} :v \neq 0, I(v)\leq 0, v \in H^1_{loc}(\R) \cap A_{\frac{T}{2}} \right\}.
\end{equation}
We have the following lemma.
\begin{lemma}
  The minimization problems~\eqref{min:Nehari-Manifold-anti-periodic} and~\eqref{min:prob-norm} share the same minimizers. Moreover, when $p$ is an odd integer, there exists a real minimizer.
\end{lemma}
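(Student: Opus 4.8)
The plan is to treat the two problems as genuinely equivalent by exploiting the identity valid on the Nehari manifold. Setting $Q(v):=\|\partial_x v\|_{L^2}^2-a\|v\|_{L^2}^2$, one has for $I(v)=0$
\[
  S(v)=S(v)-\tfrac{1}{p+1}I(v)=\left(\tfrac12-\tfrac1{p+1}\right)Q(v),
\]
so that on the Nehari manifold the objective of~\eqref{min:Nehari-Manifold-anti-periodic} coincides with the functional minimized in~\eqref{min:prob-norm} (with $\|v\|_{H^1}^2$ understood as the equivalent squared norm $Q(v)$, which is positive definite on $A_{\frac{T}{2}}$ since $a<\frac{4\pi^2}{T^2}$ by Poincar\'e--Wirtinger). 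First I would record that any minimizer of~\eqref{min:Nehari-Manifold-anti-periodic} satisfies $I(v)=0\leq 0$, is therefore admissible for~\eqref{min:prob-norm}, and realizes there the same objective value; this gives one inequality between the two infima.

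For the reverse direction, and to identify the minimizers, I would use the scaling $t\mapsto I(tv)$. For $v\neq 0$ with $I(v)\leq 0$ one has $Q(v)>0$ and
\[
  I(tv)=t^2 Q(v)-t^{p+1}b\|v\|_{L^{p+1}}^{p+1},
\]
which vanishes at a unique $t_0>0$ (see Figure~\ref{fig:I(ut)-foc}); explicitly $t_0^{p-1}=Q(v)/\bigl(b\|v\|_{L^{p+1}}^{p+1}\bigr)\leq 1$ because $I(v)\leq 0$. Then $t_0 v$ lies on the Nehari manifold and $Q(t_0 v)=t_0^2 Q(v)\leq Q(v)$. Consequently a minimizer $v^\ast$ of~\eqref{min:prob-norm} must satisfy $I(v^\ast)=0$, since otherwise $t_0<1$ strictly and $t_0 v^\ast$ would be admissible with strictly smaller objective; hence $v^\ast$ minimizes~\eqref{min:Nehari-Manifold-anti-periodic}. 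Combined with the previous step, this shows the two problems have the same minimal value and exactly the same minimizers.

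The ``moreover'' part is where the relaxed constraint $\{I\leq 0\}$ pays off. Assume $p$ is an odd integer and let $v^\ast$ be a minimizer (it exists by the previous lemma). Applying Lemma~\ref{lem:new} to $v^\ast$ produces a real-valued $\tilde v^\ast\in H^1_{loc}(\R)\cap A_{\frac{T}{2}}$ with $\|\tilde v^\ast\|_{L^2}=\|v^\ast\|_{L^2}$, $\|\partial_x\tilde v^\ast\|_{L^2}=\|\partial_x v^\ast\|_{L^2}$ and $\|\tilde v^\ast\|_{L^{p+1}}\geq\|v^\ast\|_{L^{p+1}}$. The first two equalities give $Q(\tilde v^\ast)=Q(v^\ast)$, so the objective of~\eqref{min:prob-norm} is unchanged, while the last inequality together with $b>0$ yields
\[
  I(\tilde v^\ast)=Q(\tilde v^\ast)-b\|\tilde v^\ast\|_{L^{p+1}}^{p+1}\leq Q(v^\ast)-b\|v^\ast\|_{L^{p+1}}^{p+1}=I(v^\ast)=0.
\]
Thus $\tilde v^\ast$, which is nonzero as $\|\tilde v^\ast\|_{L^2}=\|v^\ast\|_{L^2}\neq 0$, is admissible for~\eqref{min:prob-norm} and attains the minimal value, so it is a real minimizer; by the equivalence established above it also minimizes~\eqref{min:Nehari-Manifold-anti-periodic}.

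The main obstacle, and indeed the whole point of introducing~\eqref{min:prob-norm}, is that the rearrangement of Lemma~\ref{lem:new} can only \emph{increase} the $L^{p+1}$ norm, which pushes $I$ toward negative values and would throw $\tilde v^\ast$ off the manifold $\{I=0\}$. Symmetrizing directly on the Nehari manifold would therefore fail; the inequality-constrained formulation, whose feasible set $\{I\leq 0\}$ is preserved by the rearrangement, is exactly the device that makes the symmetrization admissible. A secondary point to check carefully is the bound $t_0\leq 1$, which rests on the positive definiteness of $Q$ on $A_{\frac{T}{2}}$ and hence on the hypothesis $a<\frac{4\pi^2}{T^2}$.
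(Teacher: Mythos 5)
Your proposal is correct and follows essentially the same route as the paper: the identity $S=\left(\tfrac12-\tfrac1{p+1}\right)Q$ on $\{I=0\}$, the scaling argument showing a minimizer of the relaxed problem must satisfy $I=0$, and the application of Lemma~\ref{lem:new} to the inequality-constrained problem to produce a real minimizer. Your write-up is in fact somewhat more explicit than the paper's (the closed form for $t_0$, the clarification that $\|v\|_{H^1}^2$ must be read as the equivalent quadratic form $Q$, and the remark on why the relaxation $\{I\leq 0\}$ is the key device), but the underlying argument is identical.
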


\begin{proof}
  Let	
  \[
    m_1:=\min \{ S(u): u \in H^1_{loc}(\R) \cap A_{\frac{T}{2}},u \neq 0, I(u)=0\},
  \]
  and 
  \[
    m_2:= \min \{ \left( \frac{1}{2}- \frac{1}{p+1} \right) \|v\|^2_{H^1} : v \in H^1_{loc}(\R) \cap A_{\frac{T}{2}}, v\neq 0, I(v)\leq 0 \}.
  \]
  We will prove that $m_1=m_2$.
  Let $u$ be such that $m_1$ is reached. Hence $I(u)=0$. We have 
  \[
    m_1=S(u)=S(u)-\frac{1}{p+1} I(u)= \left( \frac{1}{2} - \frac{1}{p+1} \right) \|u\|^2_{H^1}\geq m_2.
  \]
  Let $u$ be such that $m_2$ is reached. Then $I(u) \leq 0$. We will prove that $I(u)=0$. By contradiction, we suppose that $ I(u) <0$. As we can see in Figure~\ref{fig:I(ut)-foc} there exists $t_0<1$ such that $I(t_0u)=0$. 
  Therefore we have 
  \[
    \left( \frac{1}{2}- \frac{1}{p+1} \right) \|t_0u\|^2_{H^1} \leq \left( \frac{1}{2}- \frac{1}{p+1} \right) \|u\|^2_{H^1}=m_2,
  \]
  which gives the contradiction. Thus $I(u)=0$. That being the case, we have
  \[
    m_2= \left( \frac{1}{2}- \frac{1}{p+1} \right) \|u\|^2_{H^1}=S(u)\geq m_1.
  \]
  Hence $m_1=m_2$.
  On the other hand from Lemma~\ref{lem:new} of the Fourier rearrangement inequality, we conclude that if $p$ is an odd integer, then there exists $\tilde u \in H^1_{loc}(\R) \cap A_{\frac{T}{2}} $ such that:
  \[
    \tilde u (x) \in \R, \quad \|\tilde u\|_{L^2}=\|u\|_{L^2}, \quad \|\partial_x \tilde u \|_{L^2}=\|\partial_x u\|_{L^2}, \quad \|\tilde u\|_{L^{p+1}} \geq \|u\|_{L^{p+1}}.
  \] 
  Hence the minimizer can be chosen real. Moreover as in the periodic case the minimizer is a solution of~\eqref{eq:ode4} and this concludes the proof.
\end{proof}


\appendix

\section{Triple power nonlinearity}
In this section we treat a special case not covered by the results of the previous sections. Consider the triple power nonlinearity $f(u)=a_1|u|u+a_2|u|^2u+a_3|u|^3u=0$, where $a_1,a_3>0$ and $a_2<0$. We are interested in real valued bounded solutions of~\eqref{eq:ode4}. 

After using the scaling symmetries of~\eqref{eq:nls4}, we may assume $a_1=a_3=1$ and $a_2=-\gamma<0$. Let
\[
  f(\phi)=|\phi|\phi-\gamma|\phi|^2\phi+|\phi|^3\phi.
\]
Denote also
\[
  F(\phi)=\frac{1}{3}|\phi|^3-\frac{\gamma}{4}|\phi|^4+\frac15|\phi|^5.
\]

Changing notation, we set $\omega=-a$, Consider the effective potential
\[
  V(r)=-\omega\frac{r^2}{2}+F(r).
\]
We study the critical points of $V$. Since $f$ is gauge-invariant, $V$ is even in $r$ and we may restrict the study to positive critical points. We have
\[
  V'(r)=-\omega r+f(r).
\]
Define
\[
  f_1(r)=\frac{f(r)}{r}.
\]
The main difference between the present case and the nonlinearities treated in the rest of the paper is that $f_1(r)$ is not strictly increasing, i.e.~\ref{item:h3} is not satisfied.
A positive zero of $V'$ is a positive solution of
\begin{equation}
  0=-\omega+f_1(r)=-\omega+r-\gamma r^2+r^3. \label{eq:2}
\end{equation}
To determine the number of zeros of $V'$, we analyze the variations of $f_1$. We have
\[
  f_1'(r)=1-2\gamma r+3r^2,
\]
which has constant sign when $\gamma<\sqrt{3}$ and otherwise has two (positive) zeros given by
\[
  r_{\pm}=\frac13\left(\gamma\pm\sqrt{\gamma^2-3}\right).
\]
As a consequence, when $0<\gamma\leq \sqrt{3}$, the function $f_1$ is strictly increasing on $[0,\infty)$ and there exists a (unique) positive solution of~\eqref{eq:2} if and only if $\omega>0$.

When $\gamma> \sqrt{3}$, we have $f_1'(r)>0$ for $r\in(0,r_-)\cup(r_+,\infty)$ and $f_1'(r)<0$ for $r\in(r_-,r_+)$. In this case,~\eqref{eq:2} has between $0$ and $3$ solutions. In particular,~\eqref{eq:2} has three positive solutions if and only if $\omega>0$ and
\begin{multline*}
  \frac{1}{27}\left(\gamma(-2\gamma^2 + 9) - 2(\gamma^2 - 3)^{\frac32}\right) =
  f_1(r_+)<\omega<f(r_-)
  =\frac{1}{27}\left(\gamma(-2\gamma^2 + 9) + 2(\gamma^2 - 3)^{\frac32}\right).
\end{multline*}
The $\gamma-\omega$ regions of existence of solutions for~\eqref{eq:2} is represented in the figure below (\textcolor{color1}{zero solution}, \textcolor{color2}{one solution}, \textcolor{color3}{two solutions}, \textcolor{color4}{three solutions}).
\begin{center}
    \includegraphics{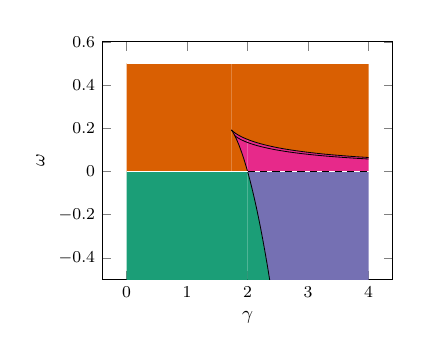}

\end{center}

Whenever they exist, we denote the solutions of~\eqref{eq:2} by
\[
  0<c_1<r_-<c_2<r_+<c_3,
\]
with the convention that when $r_\pm$ do not exist the solution is called $c_1$.

Let us now distinguish the various possibles phase portraits depending on $\gamma$ and $\omega$.

\textbf{Case $\omega<\min\{0,f_1(r_+)\}$}
\label{sec:gamma2-omega0}

In this case the only critical point of $V$ is $0$, which is a center. Solutions of~\eqref{eq:ode4} are all of sn/cn type. The phase portrait is given in Figure~\ref{fig:phase_0-1}.

\begin{figure}[htbp!]
  \centering
  \includegraphics[width=.48\textwidth]{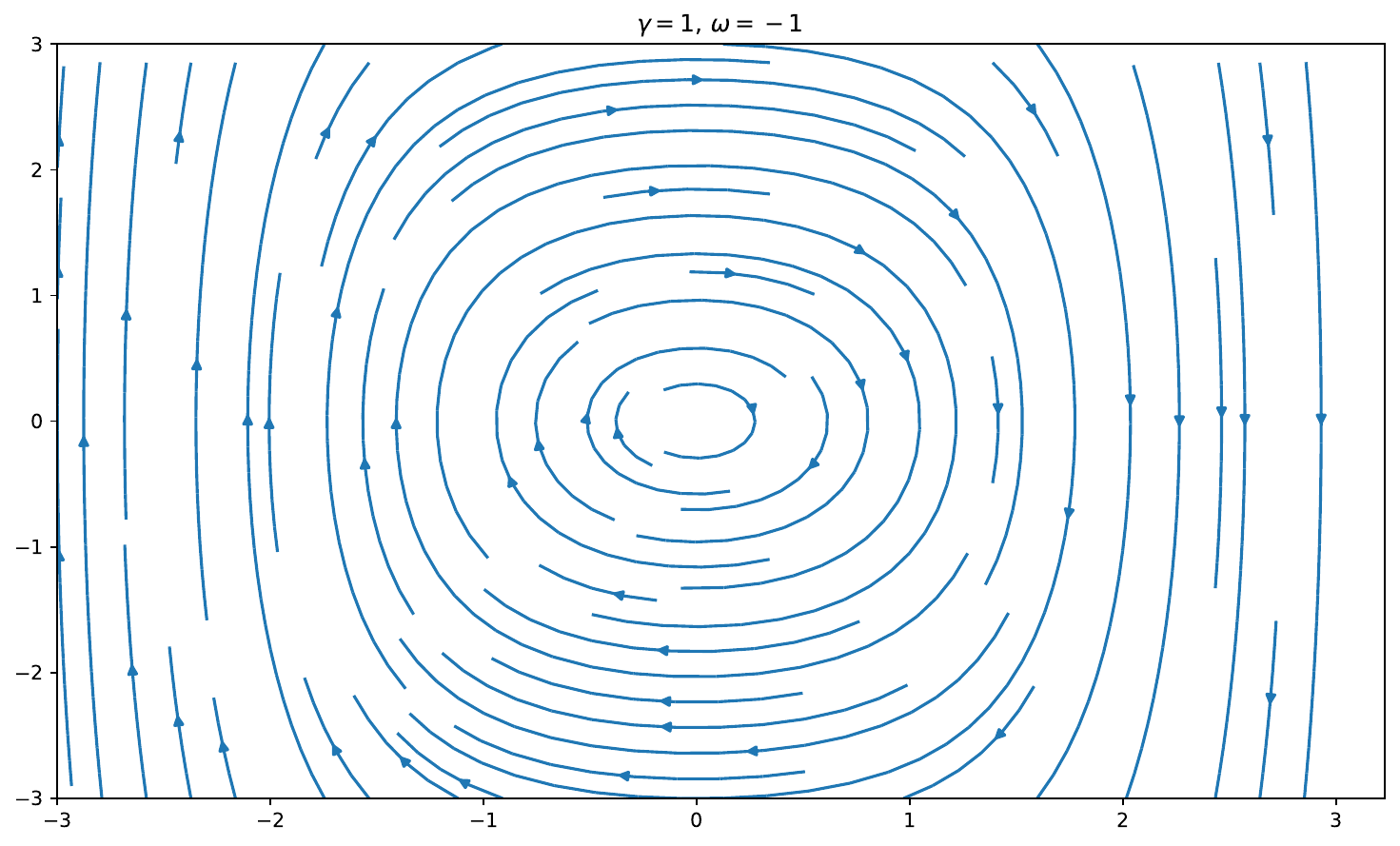}
  \caption{Phase portrait $0$ solution.}
  \label{fig:phase_0-1}
\end{figure}


\textbf{Case $\omega>0$, $\omega\not\in\{(f_1(r_+),f_1(r_-))\}$}
In this case, $V$ has two non-negative critical points: $0$ and $c_1$. The point $0$ is a saddle point. The other critical point $c_1$ is a center. The phase portrait is similar to the one of the single focusing power. We have dn type solutions close to the center and cn type solutions for higher first integrals. The phase portrait is given in Figure~\ref{fig:phase_0-2}.

\begin{figure}[htbp!]
  \centering
  \includegraphics[width=.48\textwidth]{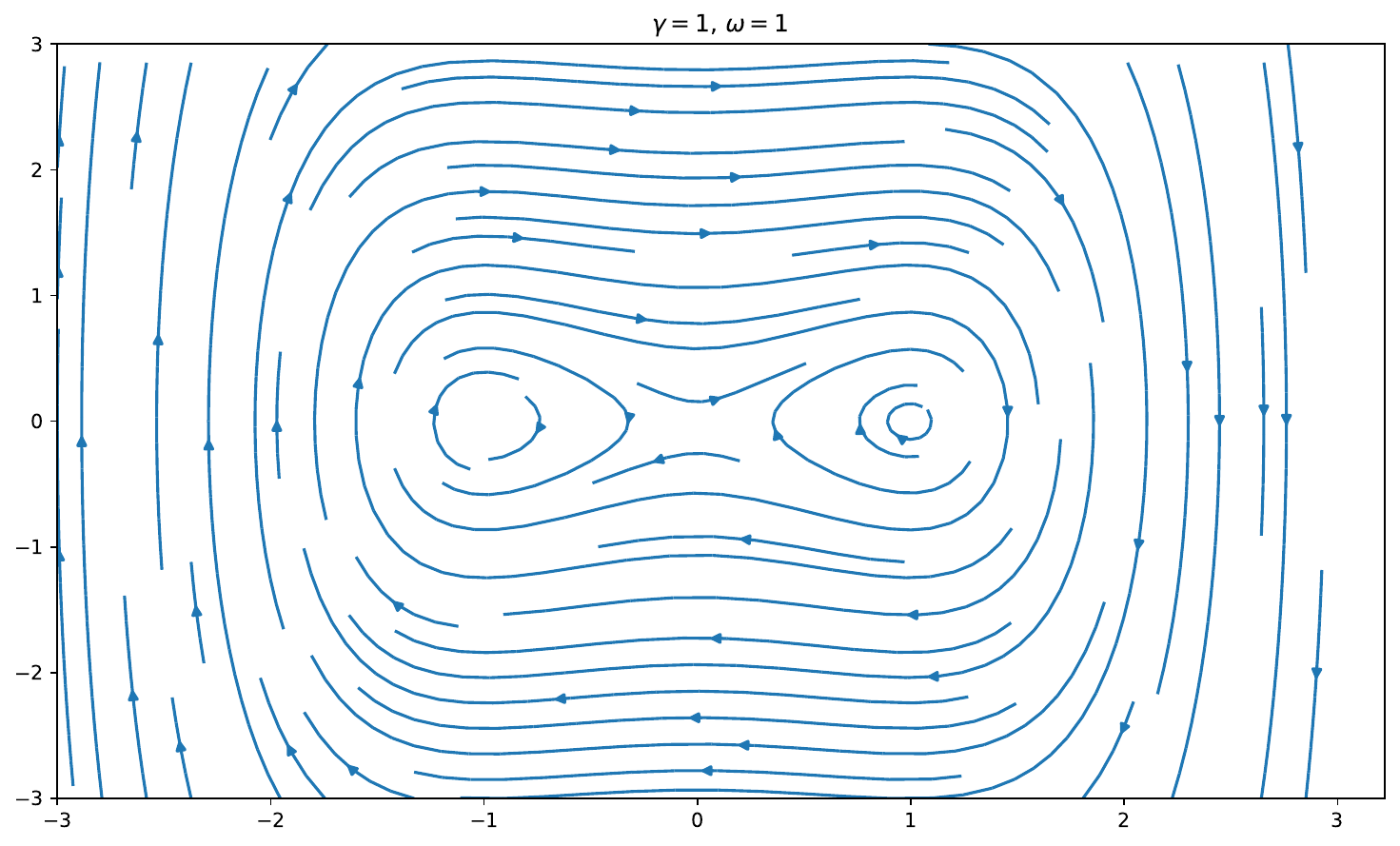}
  \caption{Phase portrait $1$ solution.}
  \label{fig:phase_0-2}
\end{figure}


\textbf{Case $f_1(r_+)<\omega<0$}

In this case, $V$ has three non-negative critical points: $0$ and $c_2,c_3$. The points $0$ and $c_3$ are centers. The other critical point $c_2$ is a saddle point.
The phase portrait is given in Figure~\ref{fig:phase_0-3}.

\begin{figure}[htbp!]
  \centering
  \includegraphics[width=.48\textwidth]{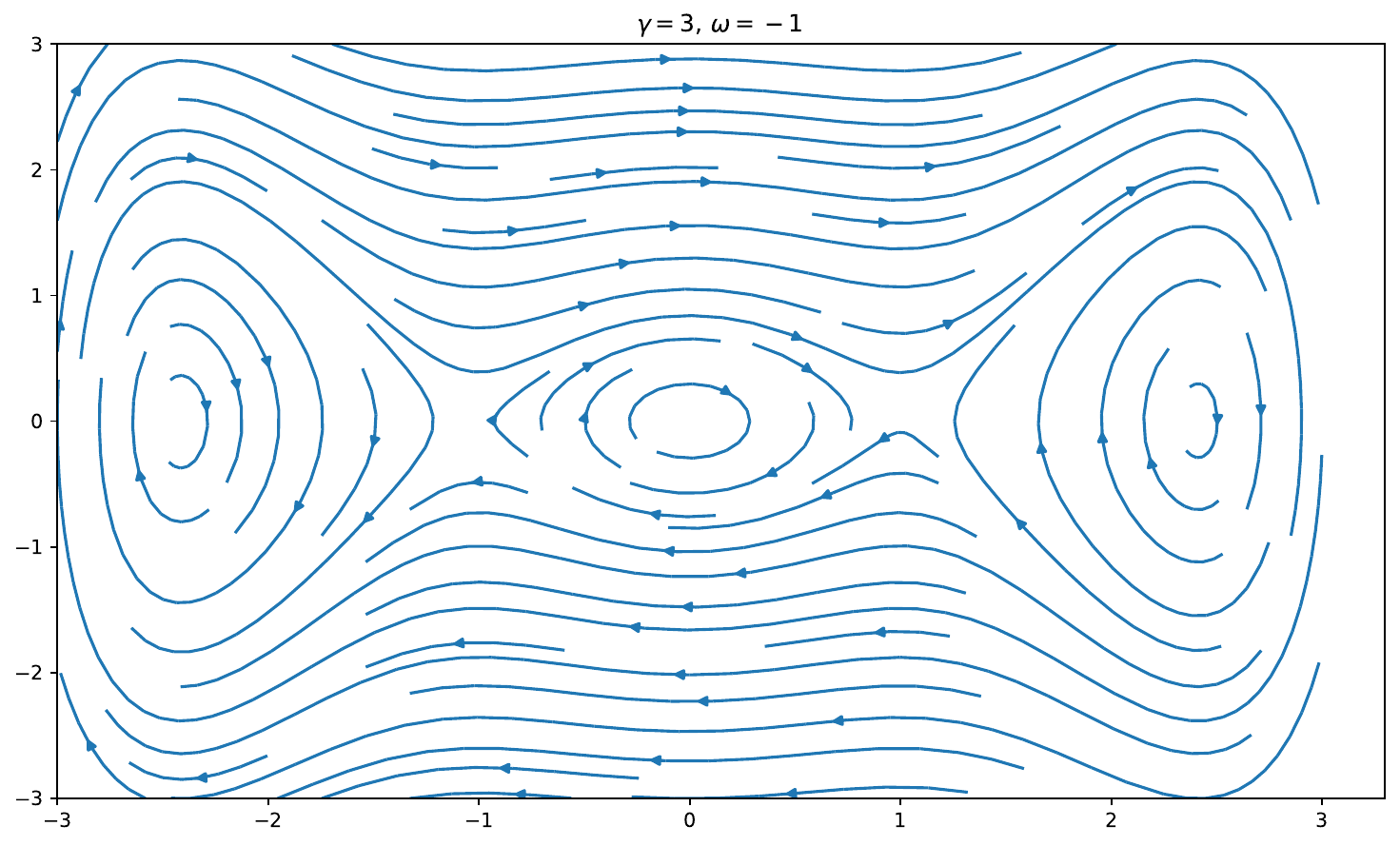}
  \caption{Phase portrait $3$ solution.}
  \label{fig:phase_0-3}
\end{figure}


\textbf{Case $\max(0,f_1(r_+))<\omega<f_1(r_-)$}

In this case, $V$ has four non-negative critical points: $0$ and $c_1,c_2,c_3$. The points $0$ and $c_2$ are saddle points. The other critical point $c_1$ and $c_3$ are centers. There are three possible phase portraits depending on the value of $V(c_2)$. If $V(c_2)>0$, then we have a homoclinic solution connecting $0$ to itself without passing through $c_2$ and an heteroclinic solution connecting $c_2$ to $-c_2$. If $V(c_2)<0$, then the heteroclinic solution connecting $0$ to itself passes through $c_2$ and $c_3$ and there are two homoclinic solutions at $c_2$ (one by lower values and the other by upper values). Finally, at the borderline case $V(c_2)=0$ the main distinguishing feature is a half-kink solution connecting $0$ to $c_2$. In the plane $(\gamma,\omega)$, the half-kink line corresponds to the curve
\[
  \gamma\to
  - \frac{5 \gamma \left(5 \gamma^{2} - 24\right)}{432} + \frac{\sqrt{5} \sqrt{\left(5 \gamma^{2} - 16\right)^{3}}}{432},
\]
starting at the point $\left(\frac{4}{\sqrt{5}},\frac{2\sqrt{5}}{27}\right)$ (observe that this is nothing but the line of non-existence of solitons found in~\cite{LiTsZw21}).
The phase portraits are given in Figure~\ref{fig:phase_0-6}.

\begin{center}
  \includegraphics{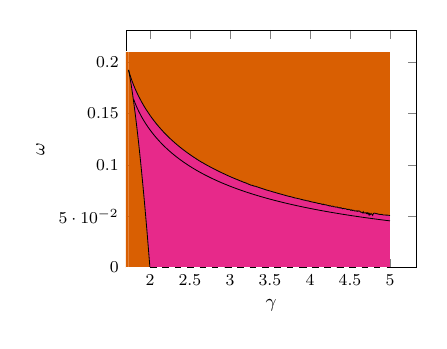}

\end{center}

\begin{figure}[htbp!]
  \centering
  \includegraphics[width=.32\textwidth]{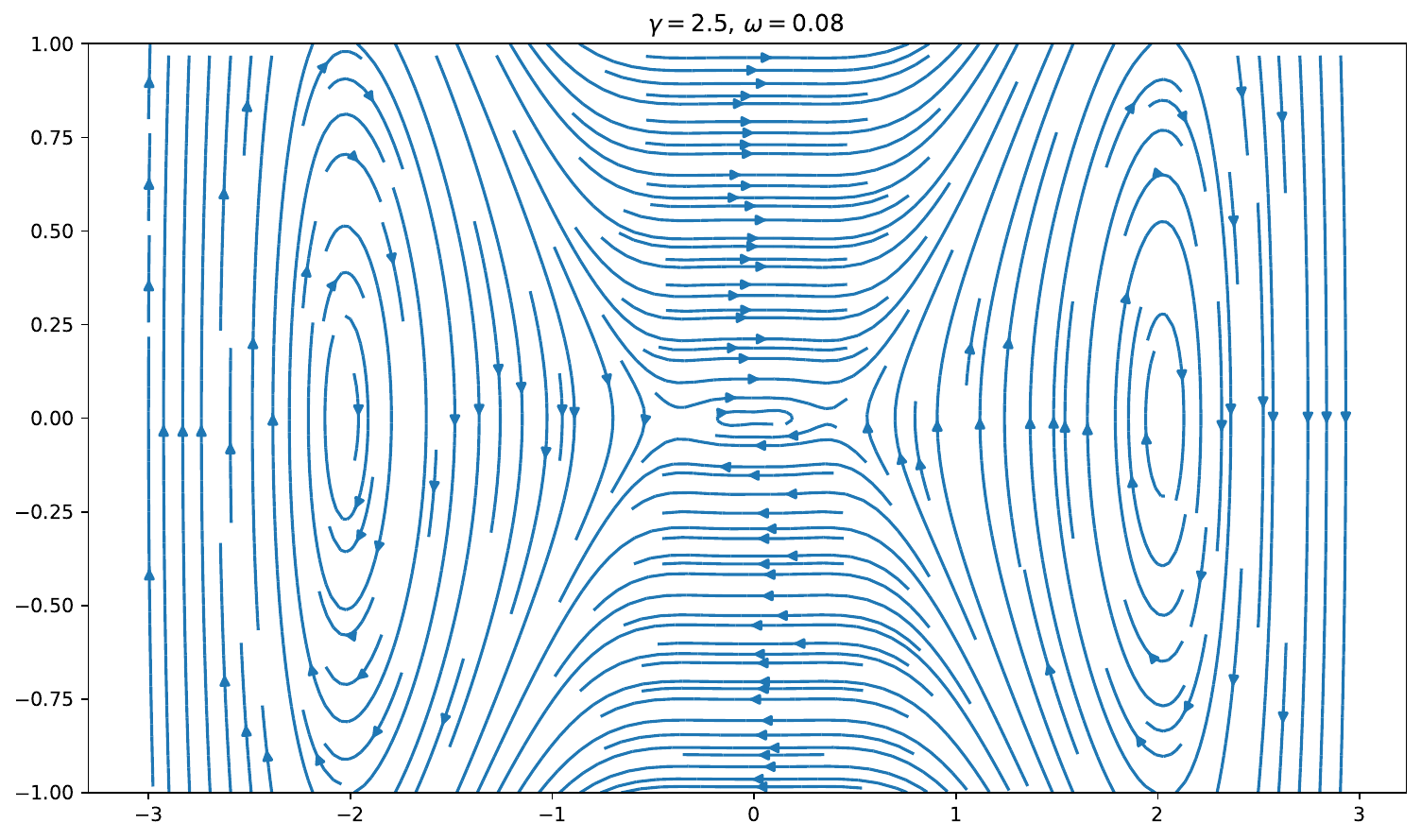}
  \includegraphics[width=.32\textwidth]{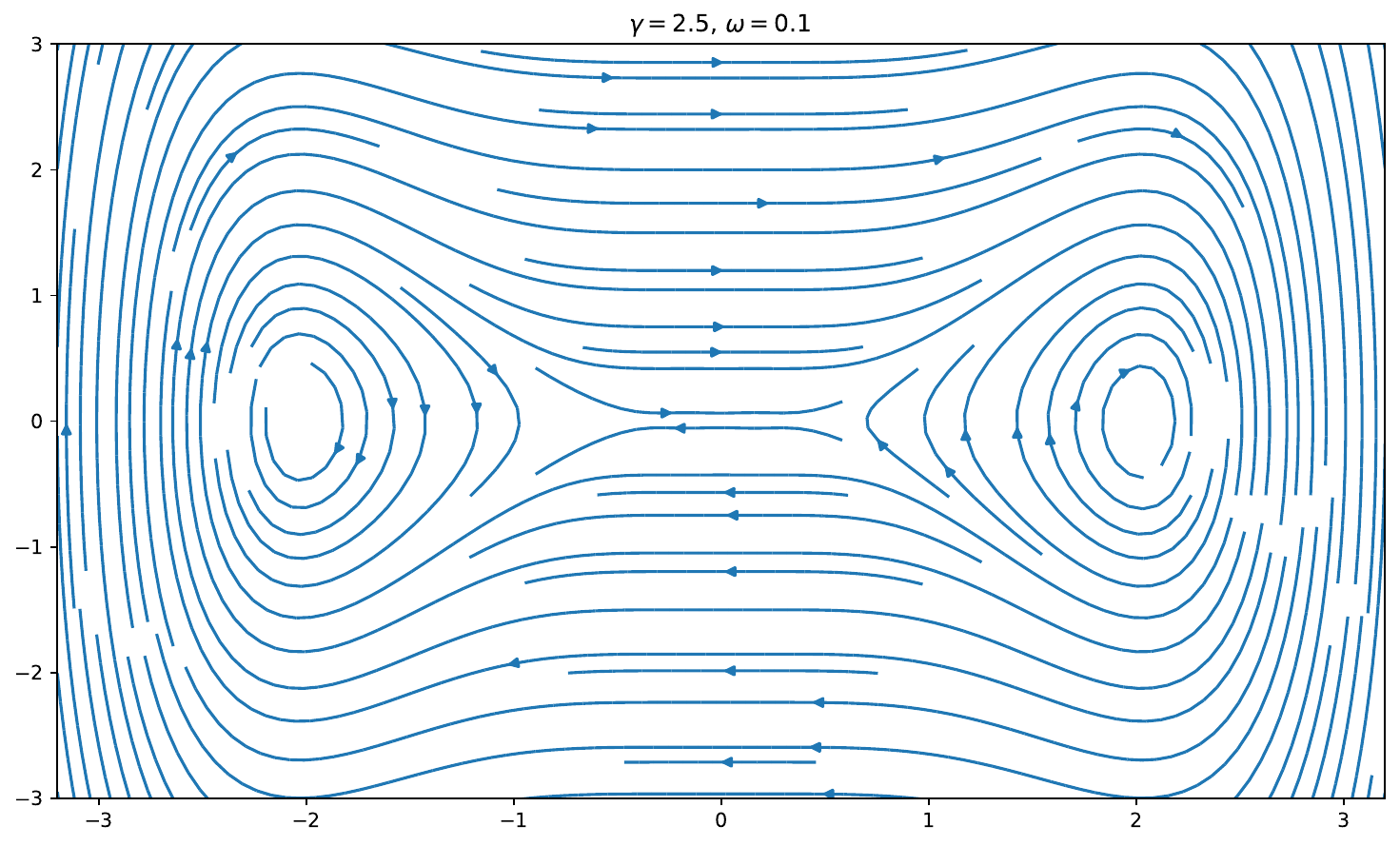}
  \includegraphics[width=.32\textwidth]{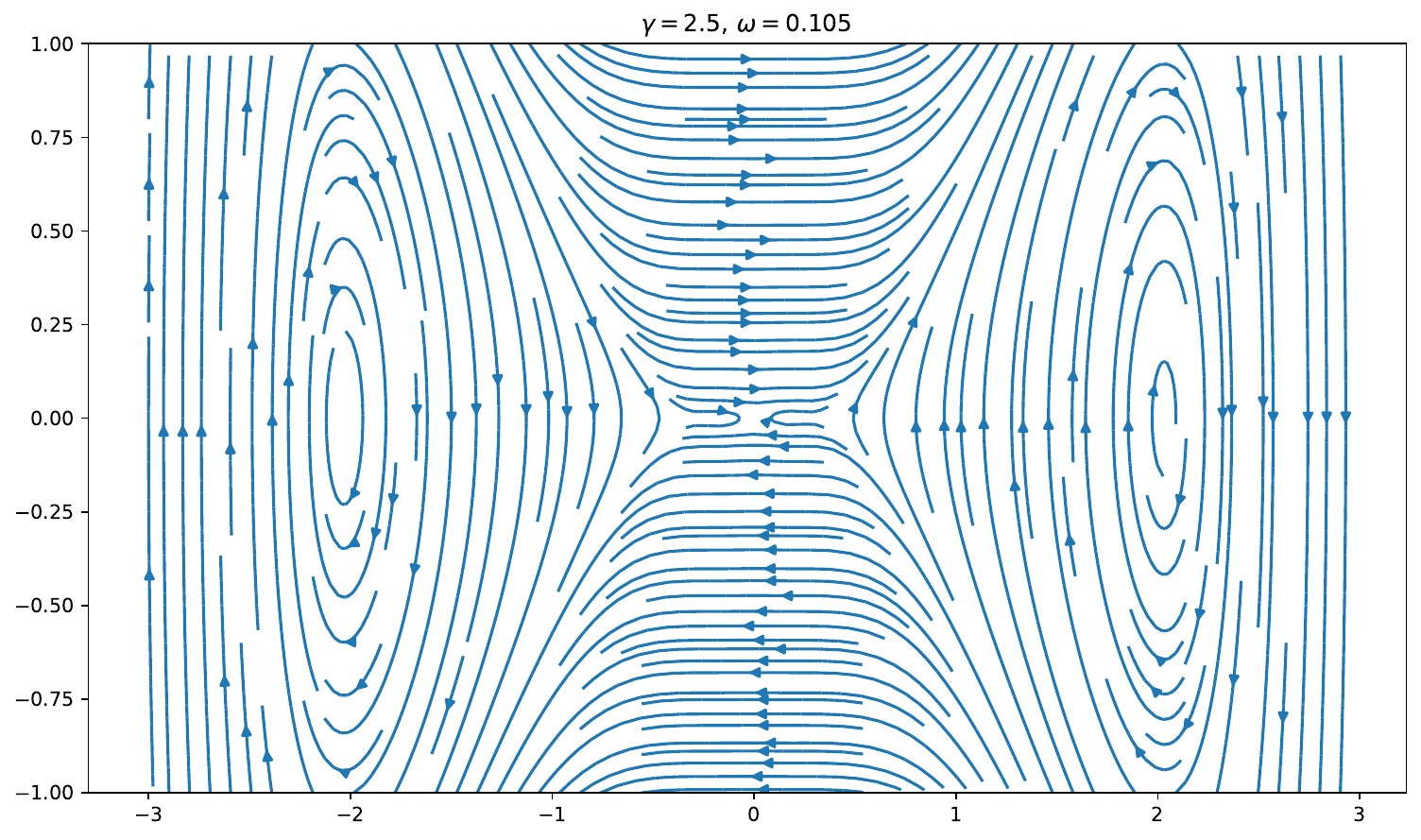}
  \\
  \includegraphics[width=.32\textwidth]{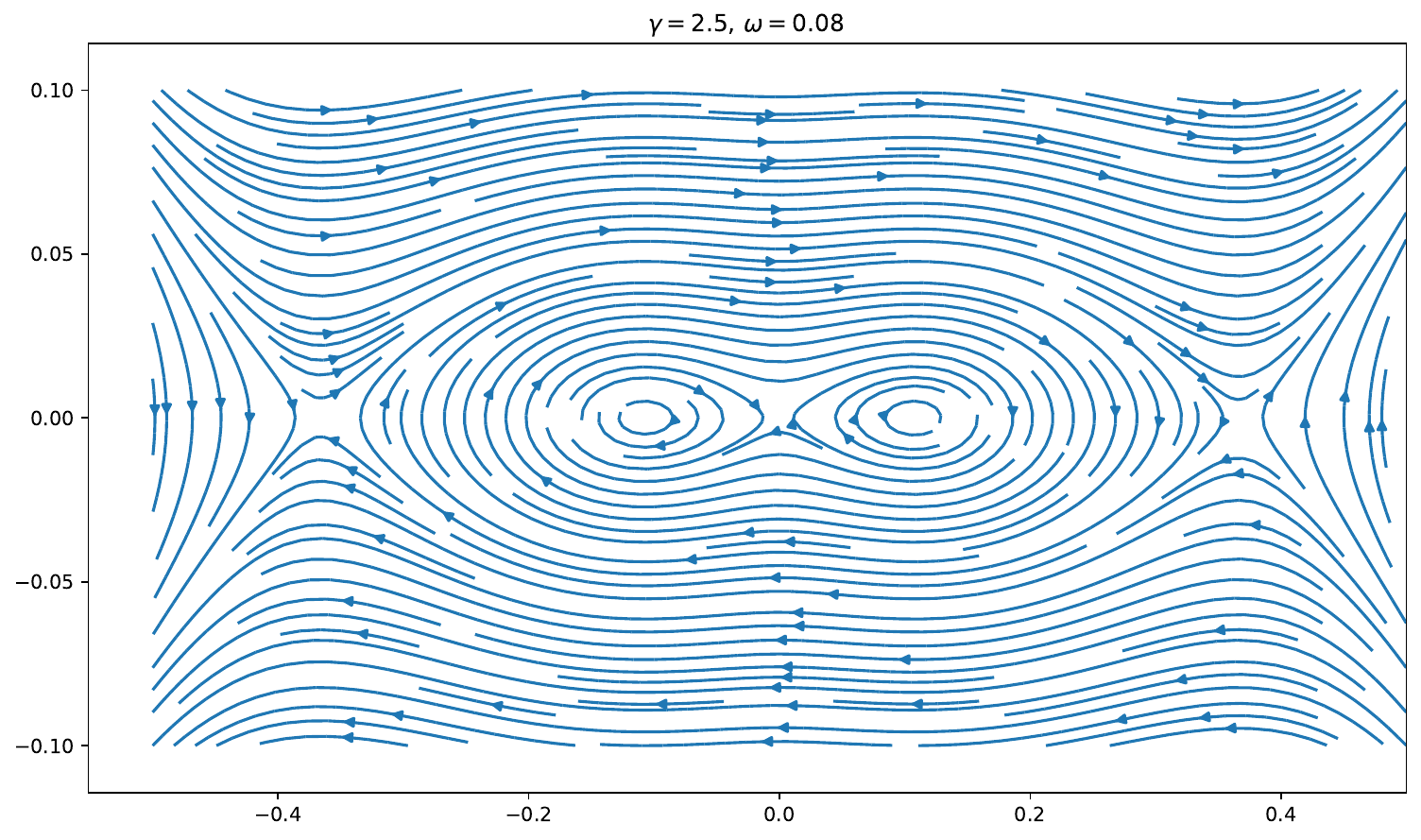}
  \includegraphics[width=.32\textwidth]{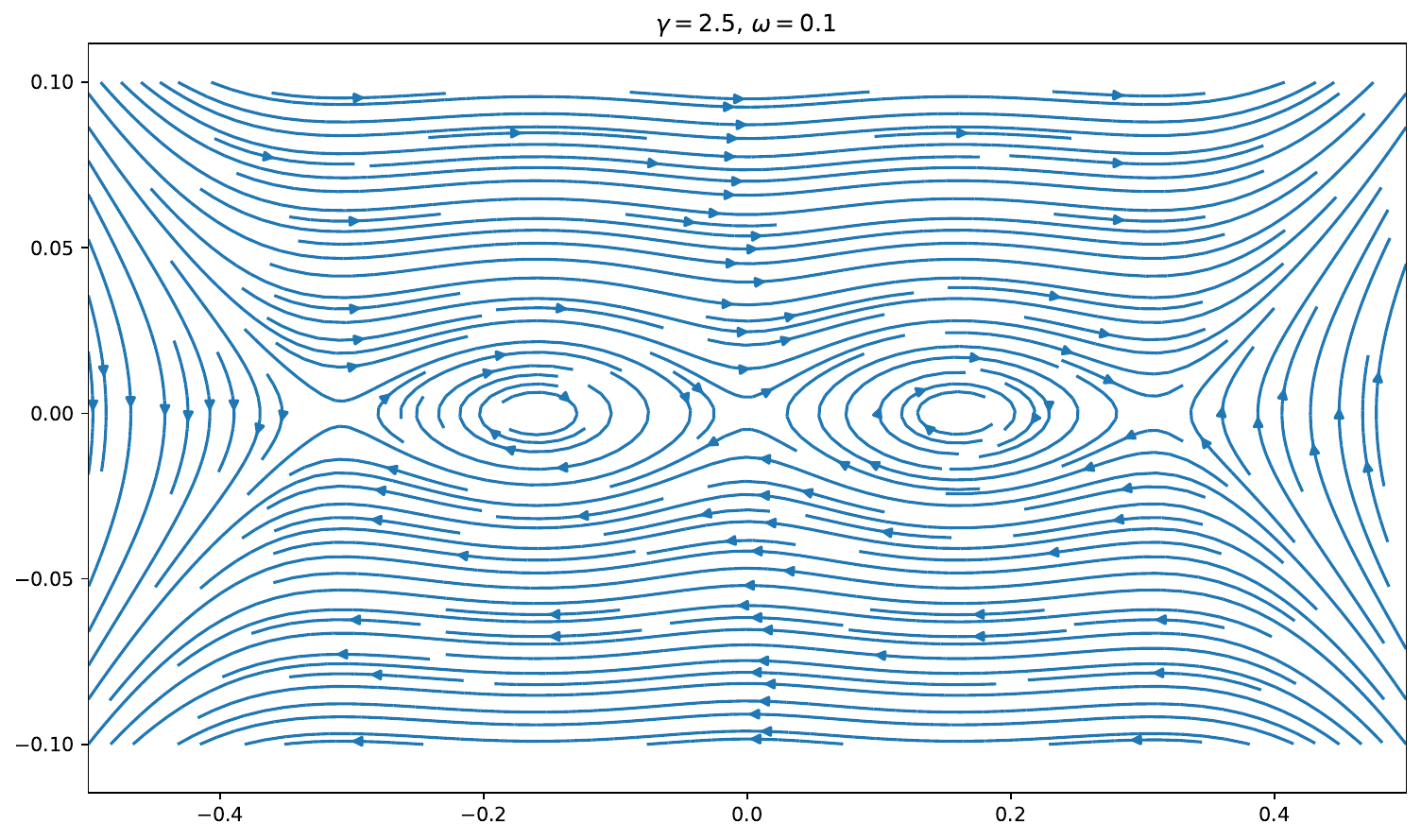}
  \includegraphics[width=.32\textwidth]{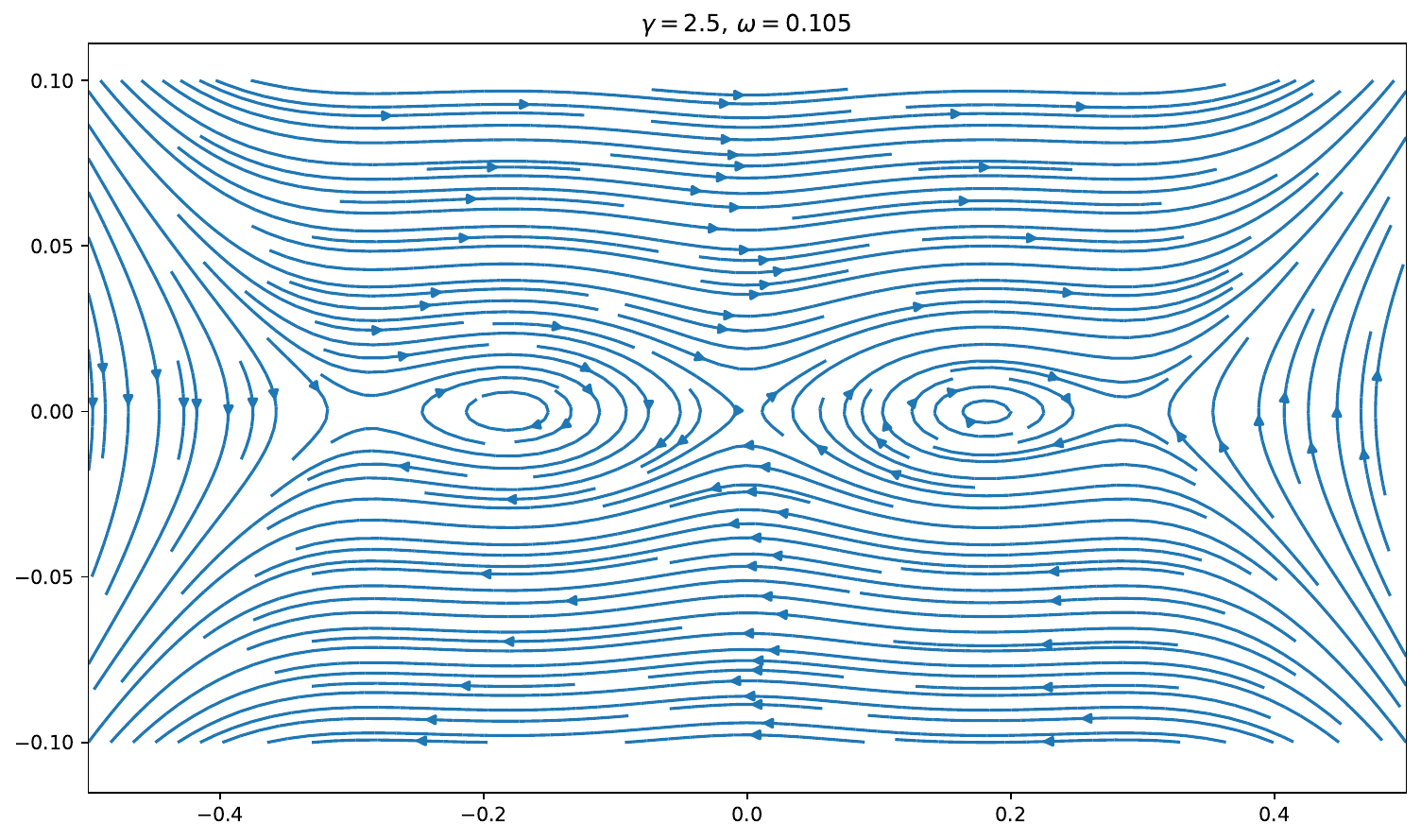}
  \caption{Phase portrait $3$ solution, $V(c_2)>0$, $V(c_2)=0$, $V(c_2)<0$, bottom line is a zoom of the top one.}
  \label{fig:phase_0-6}
\end{figure}

\FloatBarrier

\bibliographystyle{abbrv}
\bibliography{article-bibliographie}

\end{document}